\newcommand{\va}{{\mathbb \varepsilon}}
\newcommand{\R}{{\mathbb R}}
\newcommand{\X}{{\mathbb X}}
\newcommand{\N}{{\mathbb N}}
\newcommand{\La}{\Lambda}
\newcommand{\dis}{\displaystyle}
\theoremstyle{plain}
\newtheorem{theorem}{Theorem}[section]
\newtheorem{lemma}{Lemma}[section]
\newtheorem{proposition}{Proposition}[section]
\numberwithin{equation}{section}
\newcommand{\beq}{\begin{equation}}
\newcommand{\eeq}{\end{equation}}
\begin{document}

\def\ST{\songti\rm\relax}
\def\HT{\bf\relax}
\def\FS{\fangsong\rm\relax}
\def\KS{\kaishu\rm\relax}
\def\REF#1{\par\hangindent\parindent\indent\llap{#1\enspace}\ignorespaces}
\def\pd#1#2{\frac{\partial#1}{\partial#2}}
\def\ppd#1#2{\frac{\partial^2#1}{\partial#2^2}}

\title  {{ Large time behavior of  solutions for a Cauchy problem on nonlinear conservation laws with large initial data in the whole space}
\thanks {Research was supported by  Natural Science Foundation of China
(11101160,11271141),  Natural Science Foundation of Guangdong Province, China (2016A030313390) and  China Scholarship Council (201508440330)}}
\author
{\small { Lingyu Jin, Lang Li and Shaomei Fang}\\
\footnotesize  { Department of Mathematics,   South China Agricultural University,   Guangzhou 510642,China}}

\maketitle

 \noindent
 {\bf Abstract:}
We consider the Cauchy problem on a nonlinear conversation law with large initial data.  By Green's function methods, energy methods, Fourier analysis, frequency decomposition, pseudo-differential operators,  we obtain  the global existence and the optimal decay estimate of $t$.

\noindent
{\bf Keywords:} Time-decay estimate, large initial data, frequency decomposition.

\section{Introduction}
In this paper we consider the Cauchy problem on a scalar conservation law with a diffusion-type source
\begin{eqnarray}\label{1.1}
\begin{cases}\dis
u_t-\Delta P_{s_1}u=-\text{div}P_{s_2} f(u), \,\,&x\in\R^n ,t>0,\\
u(0,x)=u_0(x),\,\,&x\in\R^n.
\end{cases}
\end{eqnarray}
where $f(u)$ is a given smooth function, $P_{s_i},i=1,2$ is a pseudo-differential operator defined by
\begin{equation}
P_{s_i}u =\mathcal{F}^{-1}\{\frac{1}{(1+|\xi|^2)^{s_i}}\mathcal{F}u\}=\frac{1}{(I-\Delta)^{s_i}}u,
\end{equation}
then (\ref{1.1}) can be rewritten as 
\begin{eqnarray}\label{1.1j}
\begin{cases}\dis
u_t-\frac{\Delta}{(I-\Delta)^{s_1}}u=\frac{-\text{div} f(u^{})}{(I-\Delta)^{s_2}},\,\,&x\in\R^n ,t>0,\\
u(0,x)=u_0(x),\,\,&x\in\R^n.
\end{cases}
\end{eqnarray}
 
 Several mathematical models in different context can be provided some concrete examples related to equation (\ref{1.1}) for different assumptions on $s_1,s_2$. When $s_2=0$, (\ref{1.1})  becomes the following form
 \begin{eqnarray}\label{1.3}
 \begin{cases}\dis
 u_t-\frac{\Delta}{(I-\Delta)^{s_1}}u=-\text{div} f(u^{}), \,\,&x\in\R^n ,t>0,\\
 u(0,x)=u_0(x),\,\,&x\in\R^n.
 \end{cases}
 \end{eqnarray}
investigated in \cite{DR}. The authors studied the well-posedness and large time behavior for the Cauchy problem (\ref{1.3}) for  arbitrary space dimensions and for all $s_1\in \R$ in the framework of small-amplitude classical solutions. It is convenient to say that equation (\ref{1.3}) is of the regularity-gain type for $s_1<1$ whereas of the regularity-loss type for $s_1>1$. They established the time-decay rate of solutions and their derivatives up to some order, where the extra regularity on initial data was required in  the regularity-loss type $(s_1< 1$). In particular, when $s_1=1,s_2=0$  (\ref{1.1}) was derived in \cite{R} as the corresponding extension of the Navier-Stokes equations via the regularization of the Chapman-Enskog expansion from the Boltzmann equation, which is intended to obtain a bounded approximation of the linearized collision operator for both low and high frequencies.

When $s_2=s_1=1$, (\ref{1.1}) is connected with the famous BBM equation. BBM equation used as an alternative to the KdV equation which describes unidirectional propagation of weakly long dispersive waves \cite{BBM}. As a model that characterizes long waves in nonlinear dispersive media, the BBM equation, like the KdV equation, was formally derived to describe an approximation for surface water waves in a uniform channel. For the related study of the BBM equation, we can refer to \cite{K,S,SSW,XX}. Here we only give the results in \cite{K,XX}.

 Grzegorz Karch in \cite{K} considered the following problem in one dimension,
\begin{equation}\label{1.5}
\begin{cases}
u_t-u_{xxt}-\eta u_{xx}+bu_x=-(f(u))_x,\,\,&x\in\R ,t>0,\\
u(0,x)=u_0(x),\,\,&x\in\R,
\end{cases}
\end{equation}
they showed the decay in time of the spatial $L^p-$norm ($1\leq p\leq \infty$) of solutions under general assumptions about the nonlinearity. In particular, solutions of the nonlinear equation (\ref{1.5}) have the same long time behavior as their linearizations at $u=0$.
For the Cauchy problem in general $n$ dimensions, \cite{XX} studied the following problem:
\begin{equation}
\begin{cases}
u_t-\Delta
 \partial _t u-\eta\Delta u+(\beta\cdot \nabla ) u=-\text{div} f(u^{}),\,\,&x\in\R^n ,t>0,\\
u(0,x)=u_0(x),\,\,&x\in\R^n.
\end{cases}
\end{equation}
With small initial data the authors obtained the global existence and optimal $L^2-$norm convergence rates of the solutions.

 It is easy to notice that the existence, the decay in time and the regularity of the solution depend on $s_1,s_2$. In this paper, we obtain the 
 the global existence of the solution of (\ref{1.1}) for all $s_2\geq s_1$ with large initial datas. In addition, under the assumptions $s_2\geq s_1$ and $0\leq s_1<1$ we also have the optimal decay in time of the spatial $L^2-$norm of the solution with large initial data. Comparing with the work in \cite{DR,K}, we study the solution with large perturbation of initial data other than small initial data. The difficulty between large initial data and small initial data is totally different. In fact we  obtain the global existence and the optimal decay estimates by making use of Green's function method, Fourier analysis, frequency decomposition, pseudo-differential operators and the energy methods. Furthermore, the difficulty of (\ref{1.1}) lies in that there is no maximum principle like other equations such as viscous Burgers equation. Then we make $L^1$ and $L^\infty$ estimations of the solutions which are the key steps for the proof of Theorem 1.1 and Theorem 1.2.

We now introduce some notations. 

 In what follows, we denote generic positive constants by $c$ and $C$ which may change from line to line. The Fourier transform
 $\hat{f}$  of a tempered distribution $f(x)$ on $\R^n$ is defined as
 \[\mathcal{F}[f(x)](\xi)= {\hat f(\xi)}=\frac{1}{(2\pi)^n
 	}\int_{R^n}f(x)e^{-i\xi\cdot x}d\xi.\]

We will denote the square root of the Laplacian $(-\Delta)^{\frac{1}{2}}$ by $\Lambda$ and obviously
\[\widehat{\Lambda f}=|\xi|\hat{f}(\xi).\]
Let $H^{s}(\R^n)$ be the general fractional Sobolev space with the norm $$\|f\|^2_{H^s}=\int_{\R^n}(1+|\xi|^2)^{s}|\hat f|^2d\xi.$$

For $s=0$, $H^0(\R^n)=L^2(\R^n)$.
The space $\dot{H}^s(\R^n)$ denotes the homogeneous fractional Sobolev space with the norm
 $$\|f\|_{\dot{H}^s}^2=\int_{\R^n}|\xi|^{2s}|\hat f|^2d\xi.$$
 It can be easily deduced that there exist constants $c_0,c_1>0$ such that
 \begin{equation}\label{1.2}
c_0(\|f\|_{L^2}^2+\|f\|_{\dot{H}^s}^2) \leq \int_{\R^n}(1+|\xi|^2)^{s}|\hat f|^2d\xi \leq c_1 (\|f\|_{L^2}^2+\|f\|_{\dot{H}^s}^2),
 \end{equation}
 which follows $(\|f\|_{L^2}^2+\|f\|_{\dot{H}^s}^2)^{1/2}$ be an equivalent norm on $H^s(\R^n)$.

In this paper, we assume that
\begin{equation}\label{1.8}
	f(x,u)=u^{\theta+1},  1\leq \theta\leq \theta_0,\theta \in \N
\end{equation}
where 
\begin{equation}\label{1.9}
\theta_0=\begin{cases}\infty, \text{ if } n\leq 2s_2,\\
\frac{2\bigl(1+2(s_2-s_1)\bigl)}{n-2s_2}, \text{ if } n>2s_2,
\end{cases}
\end{equation}
 we are mostly interested in the existence and the large time behaviour of the solutions for the Cauchy problem of (\ref{1.1}).

 Now we introduce the main theorems in this paper.
  \begin{theorem}
  	Let $s> \{s_2,\frac{n}{2}\}$, $s_2>s_1$. Assume that $u_0\in H^s(\R^n)\bigcap L^1(\R^n)$, then  (1.1)  has a global solution  
  	\[u\in L^\infty(0,\infty;H^s(\R^n)).\]
   \end{theorem}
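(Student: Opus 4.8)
\noindent{\bf Sketch of proof (proposal).}\
The plan is to combine a routine local existence theory with a global a priori bound whose engine is an exact cancellation: although the data are large, the source $\text{div}\,f(u)$ is a perfect gradient of $u^{\theta+1}$, and it is seen by the natural energy through the factor $(I-\Delta)^{-s_2}$, which cancels precisely against the weight of the $H^{s_2}$ inner product. First I would put (\ref{1.1j}) in Duhamel form with the Green function $G(t)=\mathcal{F}^{-1}\big[e^{-t|\xi|^2/(1+|\xi|^2)^{s_1}}\big]$, whose symbol is $\le 1$, so that $e^{t\cL}$ ($\cL=\Delta/(I-\Delta)^{s_1}$) is a contraction semigroup on every $H^\sigma(\R^n)$. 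Since $s>n/2$, $H^s(\R^n)$ is a Banach algebra, so $u\mapsto u^{\theta+1}$ is locally Lipschitz on $H^s$; the extra operator $(I-\Delta)^{-s_2}\text{div}$ costs at most $\max(0,1-2s_2)$ derivatives, which is either harmless or absorbed by the high-frequency smoothing of $e^{t\cL}$ (note that $s_2<\tfrac12$ forces $s_1<s_2<1$, the regularity-gain case). A contraction mapping in $C([0,T];H^s)$ then yields a unique maximal solution on $[0,T^\ast)$ with the blow-up alternative: $T^\ast<\infty$ implies $\limsup_{t\uparrow T^\ast}\|u(t)\|_{H^s}=\infty$.

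The core step is the $H^{s_2}$ energy identity. Pairing the equation in (\ref{1.1}) with $(I-\Delta)^{s_2}u$, the time term gives $\tfrac12\frac{d}{dt}\|u\|_{H^{s_2}}^2$; the linear term contributes, by Plancherel, $\int_{\R^n}|\xi|^2(1+|\xi|^2)^{s_2-s_1}|\hat u|^2\,d\xi\ge 0$, nonnegative precisely because $s_2>s_1$; and the nonlinear term is
\[
-\int_{\R^n}(I-\Delta)^{s_2}u\cdot(I-\Delta)^{-s_2}\text{div}\,f(u)\,dx
=-\int_{\R^n}u\,\text{div}\,f(u)\,dx
=\frac{1}{\theta+2}\sum_j\int_{\R^n}\partial_{x_j}\!\big(u^{\theta+2}\big)\,dx=0 .
\]
Hence $\frac{d}{dt}\|u(t)\|_{H^{s_2}}^2+2\int_{\R^n}|\xi|^2(1+|\xi|^2)^{s_2-s_1}|\hat u|^2\,d\xi=0$, so $\|u(t)\|_{H^{s_2}}\le\|u_0\|_{H^{s_2}}$ for all $t$, \emph{with no smallness assumption}, and in addition $\int_0^\infty\!\!\int_{\R^n}|\xi|^2(1+|\xi|^2)^{s_2-s_1}|\hat u|^2\,d\xi\,dt\le\|u_0\|_{H^{s_2}}^2$; after a low/high frequency split this controls $\|\nabla u\|_{L^2}$ at low and $\|u\|_{\dot H^{1+s_2-s_1}}$ at high frequencies in $L^2_t$. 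This pair of bounds is what replaces a smallness hypothesis.

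To pass to $H^s$ I would first extract $L^1$ and $L^\infty$ information. If $s_2>n/2$, the embedding $H^{s_2}\hookrightarrow L^\infty$ already gives $\|u(t)\|_{L^\infty}\le C\|u_0\|_{H^{s_2}}$; in general I would run the Duhamel formula using (i) boundedness of the Bessel potential $(I-\Delta)^{-s_2}$ on every $L^p$, (ii) $L^p$-bounds for $G(t)$ and $\nabla G(t)$ read off the symbol via Hausdorff--Young and a frequency decomposition, (iii) the hypothesis $u_0\in L^1(\R^n)$, and (iv) Gagliardo--Nirenberg interpolation against the bounds of the previous paragraph, where the constraint $\theta\le\theta_0$ in (\ref{1.9}) is exactly what keeps the interpolation exponents admissible, in order to obtain $\int_0^T\|u(\tau)\|_{L^\infty}^{2\theta}\,d\tau<\infty$ for every $T$ (and, in the regularity-gain range, with finite limit as $T\to\infty$). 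Finally, pairing (\ref{1.1}) with $(I-\Delta)^s u$: the dissipative term $\int|\xi|^2(1+|\xi|^2)^{s-s_1}|\hat u|^2$ is again $\ge0$, while the nonlinear term $-\big((I-\Delta)^{s-s_2}u,\ \text{div}\,f(u)\big)$ is bounded, via a commutator / fractional-Leibniz (Kato--Ponce) estimate, the Moser estimate $\|u^{\theta+1}\|_{H^s}\lesssim\|u\|_{L^\infty}^\theta\|u\|_{H^s}$, and absorption of the borderline piece into the $H^s$-dissipation, by $C\|u\|_{L^\infty}^\theta\|u\|_{H^s}^2$. Gr\"onwall together with the $L^\infty$ bound then gives $\sup_{[0,T]}\|u(t)\|_{H^s}<\infty$ for every $T$, contradicting $T^\ast<\infty$; hence $T^\ast=\infty$ and $u\in L^\infty_{\mathrm{loc}}([0,\infty);H^s)$, the uniform-in-$t$ bound following once the above $L^\infty$ integral is finite on all of $(0,\infty)$.

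I expect the main obstacle to be precisely this large-data closure. The naive mild-solution iteration does \emph{not} suffice: the nonlinear Duhamel term is controlled by $\int_0^t(t-\tau)^{-\gamma}\|u(\tau)\|_{L^\infty}^{\theta+1}\,d\tau$ with $\gamma<1$, a superlinear feedback that for large data blows up in finite time, so that argument only gives local existence. What rescues the proof is the exact cancellation in the $H^{s_2}$ identity, which produces a genuinely nonlinearity-free differential inequality at regularity $s_2$; the remaining, and most delicate, task is then to propagate that low-order control up to $H^s$ without reintroducing a smallness condition. This is where the subcritical exponent condition $\theta\le\theta_0$ is essential, and where, in the regularity-loss regime $s_1\ge 1$ (in which the symbol of $\cL$ provides no high-frequency smoothing and the linear flow does not regularize), one must lean on the extra assumed regularity $s>s_2$ of the data.
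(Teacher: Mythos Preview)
Your local-existence argument and, crucially, the $H^{s_2}$ energy identity with the exact cancellation $\int u\,\text{div}(u^{\theta+1})\,dx=0$ are precisely what the paper uses, and you have correctly located the real difficulty in the passage from $H^{s_2}$ to $H^s$. But your proposed route through that passage differs from the paper's and, as sketched, does not close.

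You plan to first obtain $L^\infty$ control (by Duhamel, Green's function bounds, and interpolation) and then run a single $H^s$ energy estimate closed by Gr\"onwall with coefficient $\|u\|_{L^\infty}^{2\theta}$. Two problems. First, from the $H^{s_2}$ identity you have only $\|u(t)\|_{H^{s_2}}\le C$ and $\int_0^\infty\|\Lambda^{1+s_2-s_1}u\|_{L^2}^2\,dt\le C$; when $1+s_2-s_1\le n/2$ (which the hypotheses certainly allow) Gagliardo--Nirenberg cannot reach $L^\infty$ from these two norms, and your Duhamel alternative requires a bound on $u^{\theta+1}$ in some $L^p$ that already presupposes control beyond $H^{s_2}$, so the argument is circular. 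Second, even when you can interpolate, Gr\"onwall gives $\|u(t)\|_{H^s}\le\|u_0\|_{H^s}\exp\bigl(C\int_0^t\|u\|_{L^\infty}^{2\theta}\bigr)$, which is uniformly bounded in $t$ only if the integral is finite on $(0,\infty)$; you claim this ``in the regularity-gain range'', but Theorem~1.1 carries no restriction $s_1<1$, so at best you get $L^\infty_{\mathrm{loc}}(0,\infty;H^s)$ rather than the stated $L^\infty(0,\infty;H^s)$.

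The paper avoids both issues by never using $L^\infty$ as an input. It runs a finite induction in the regularity index with step $l_0=(s_2-s_1)/2$: testing (\ref{5.1}) against $\Lambda^{2(j+1)l_0}u$ and estimating
\[
\Bigl|\int \text{div}(u^{\theta+1})\,\Lambda^{2(j+1)l_0}u\,dx\Bigr|
\le \|\Lambda^{jl_0+1}(u^{\theta+1})\|_{L^{\frac{2n}{n+2}}}\,\|\Lambda^{(j+2)l_0}u\|_{L^{\frac{2n}{n-2}}}
\le C\,\|\Lambda^{1+(j+2)l_0}u\|_{L^2}^2,
\]
where the fractional Leibniz rule (Lemma~\ref{l2.9}) is applied with $p_2=n/(1+2l_0)$ and the factor $\|u^\theta\|_{L^{p_2}}$ is bounded by $\|u\|_{H^{s_2}}^\theta$ via Sobolev embedding; this is exactly where $\theta\le\theta_0$ is used. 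The right-hand side is the $L^2_t$-dissipation integral produced at the \emph{previous} step, so each step gains $l_0$ derivatives and after finitely many steps one reaches $\|u(t)\|_{H^s}\le C\|u_0\|_{H^s}$ uniformly in $t$, with no Gr\"onwall growth. The bound $\|u\|_{L^\infty}\le c\|u\|_{H^s}$ then falls out as a consequence, not a hypothesis, and the Green's function estimates of Section~3 play no role in the existence proof at all.
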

  
 \begin{theorem}
Assume $n>2, u_0\in L^1(\R^n)\bigcap H^s(\R^n)$, $s> \max\{s_2,n/2\}, s_2>s_1, 0\leq s_1< 1$ and $u\in L^\infty(0,\infty;H^s(\R^n))$ is the solution of   (1.1)  with initial data $u_0$, then
 $$\|\Lambda ^s  u(t)\|_{L^2} \leq c (1+t)^{-\frac{n}{4}-\frac{s}{2}}.$$
 \end{theorem}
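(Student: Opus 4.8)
The plan is to combine the global solution from Theorem 1.1 with Duhamel's formula and sharp decay estimates for the linear semigroup, bootstrapped against an a priori decay ansatz. First I would write the solution of (\ref{1.1}) via Duhamel's principle as
\[
u(t) = G(t)\ast u_0 - \int_0^t \nabla G(t-\tau)\ast \frac{f(u(\tau))}{(I-\Delta)^{s_2}}\,d\tau,
\]
where $G(t)$ is the Green's function whose Fourier symbol is $\exp\bigl(-\frac{|\xi|^2}{(1+|\xi|^2)^{s_1}}t\bigr)$. The key linear facts, obtained by splitting the frequency integral into low frequencies $|\xi|\le 1$ and high frequencies $|\xi|>1$ (the frequency decomposition advertised in the abstract), are: for $0\le s_1<1$ the symbol behaves like $e^{-c|\xi|^2 t}$ for low frequencies and like $e^{-c|\xi|^{2-2s_1}t}$ for high frequencies, so there is genuine smoothing with no regularity loss. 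This yields $\|\Lambda^k G(t)\ast g\|_{L^2}\lesssim (1+t)^{-\frac n4-\frac k2}\|g\|_{L^1} + e^{-ct}\|\Lambda^k g\|_{L^2}$ and similar $L^1\to L^2$ estimates for $\Lambda^{k}\nabla G(t)$ with an extra factor $(1+t)^{-1/2}$ from the divergence.

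Next I would set up the bootstrap. Define $M(t) = \sup_{0\le\tau\le t}\bigl[(1+\tau)^{\frac n4+\frac s2}\|\Lambda^s u(\tau)\|_{L^2} + (1+\tau)^{\frac n4}\|u(\tau)\|_{L^2} + \dots\bigr]$, including whatever intermediate norms are needed to close (likely $\|u\|_{L^2}$ and $\|\Lambda^{s}u\|_{L^2}$, with lower-order $\|\Lambda^{k}u\|_{L^2}$ interpolated). Since $f(u)=u^{\theta+1}$, the nonlinear term is estimated by putting $\frac{f(u)}{(I-\Delta)^{s_2}}$ into $L^1$: by Hölder and the Sobolev embeddings that the constraint (\ref{1.9}) on $\theta_0$ is designed to guarantee, $\|u^{\theta+1}\|_{L^1}\lesssim \|u\|_{L^{\theta+1}}^{\theta+1}\lesssim \|u\|_{L^2}^{a}\|\Lambda^{s_2} u\|_{L^2}^{b}$ (or with $\Lambda^s$), each factor decaying; the gain $(I-\Delta)^{-s_2}$ only helps. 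One then checks that $\int_0^t (1+t-\tau)^{-\frac n4-\frac s2-\frac12}\|u^{\theta+1}(\tau)\|_{L^1}\,d\tau$, using the decay of $\|u^{\theta+1}(\tau)\|_{L^1}\lesssim M(t)^{\theta+1}(1+\tau)^{-\gamma}$ with $\gamma = \frac{n(\theta+1)}{4} - (\text{something})>\frac n4+\frac s2$ once $\theta\ge 1$ and $n>2$, is dominated by $(1+t)^{-\frac n4-\frac s2}$. The high-frequency part of the Duhamel term is absorbed by the exponential factor together with the uniform bound $\|u(\tau)\|_{H^s}\le C$ from Theorem 1.1. This gives $M(t)\le C_0 + C_1 M(t)^{\theta+1}$ on any interval where $M$ is, say, $\le 2C_0$; a standard continuity argument then bounds $M(t)$ uniformly, which is exactly the claimed estimate.

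The main obstacle I expect is verifying that the time integral in the Duhamel term actually converges to the right rate, i.e. that the nonlinear decay exponent $\gamma$ strictly exceeds $\frac n4+\frac s2+\frac12$ (or at worst equals it, producing a harmless logarithm that must then be removed by refining the ansatz). This is precisely where the hypotheses $n>2$, $\theta\ge 1$, and the definition of $\theta_0$ enter, so the bookkeeping — tracking which norm ($L^2$ versus $\dot H^{s_2}$ versus $\dot H^{s}$) each factor of $u$ is measured in, and confirming the Sobolev exponents close under (\ref{1.9}) — is the delicate part. A secondary subtlety is the high-frequency regularity-loss issue: since $0\le s_1<1$ we are in the regularity-gain regime, so there is no loss, but one still must be careful that differentiating $s$ times in the high-frequency zone is controlled by $e^{-ct}\|\Lambda^s u_0\|_{L^2}$ rather than requiring extra derivatives on the data, and that the uniform-in-time $H^s$ bound from Theorem 1.1 suffices to handle the nonlinear high-frequency contribution.
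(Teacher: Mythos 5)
There is a genuine gap, and it is exactly at the point you dismiss as ``a standard continuity argument.'' Your scheme is the classical small-data Duhamel bootstrap: it ends with an inequality of the form $M(t)\le C_0+C_1M(t)^{\theta+1}$, where $C_0$ is determined by $\|u_0\|_{L^1\cap H^s}$. A continuity argument extracts a uniform bound from this only under a smallness condition (essentially $C_1(2C_0)^{\theta}$ small, so that $C_0+C_1(2C_0)^{\theta+1}<2C_0$); Theorem 1.2, however, is asserted for \emph{large} data, which is the whole point of the paper, so the bootstrap does not close. The paper avoids this entirely: it never runs Duhamel on the high frequencies with an ansatz. Instead it performs a Schonbek-type \emph{time-dependent} frequency splitting at $|\xi|\sim\eta(t)=\sqrt{\mu/(1+t)}$ with $\mu>n+2s$; proves the low-frequency decay $\|\Lambda^l u_L\|_{L^2}\lesssim(1+t)^{-n/4-l/2}$ by Duhamel using only the uniform bounds already available from Theorem 1.1 (in particular $\|u^{\theta+1}\|_{L^1}\le c$, no decay ansatz needed); upgrades this to $\|u\|_{H^{s_2}}\lesssim(1+t)^{-n/4}$ together with a $(1+t)^{\mu/2}$-weighted bound on the time integral of the dissipation via a weighted energy inequality; proves a uniform-in-time bound $\|u(t)\|_{L^1}\le c\|u_0\|_{L^1}$ from the Green's function estimates, which is what controls the commutator-type term $R(t)$ created by differentiating the moving cutoff; and finally closes a weighted energy estimate for $\|\Lambda^s u_H\|_{L^2}^2$ multiplied by $(1+t)^{\mu/2}$. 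None of these steps needs smallness, and nonlinearity in the high-frequency energy estimate is handled by interpolation against the dissipation, not by an ansatz on its own decay.

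A secondary, quantitative error in your rate bookkeeping: the claim that the nonlinear $L^1$ decay exponent $\gamma$ exceeds $\tfrac n4+\tfrac s2+\tfrac12$ fails already for $\theta=1$, the case allowed by (1.8). There $\|u^{2}\|_{L^1}=\|u\|_{L^2}^2\lesssim(1+\tau)^{-n/2}$, so $\gamma=\tfrac n2$, while $s>\tfrac n2$ forces $\tfrac n4+\tfrac s2>\tfrac n2$. Consequently the portion $\tau\in[t/2,t]$ of your Duhamel integral, estimated through $(1+t-\tau)^{-\frac n4-\frac s2-\frac12}\|u^{\theta+1}(\tau)\|_{L^1}$, only produces $(1+t)^{-n/2}$, short of the claimed $(1+t)^{-\frac n4-\frac s2}$. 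Repairing this forces you to measure the nonlinearity near $\tau=t$ in a higher norm such as $\|u\|_{L^\infty}^{\theta}\|\Lambda^s u\|_{L^2}$, which reinserts $M(t)$ at full strength and sends you back to the smallness obstruction above. So the proposal, as written, would prove the theorem only for small data; to recover the paper's statement you would need to replace the bootstrap by the Fourier-splitting/weighted-energy machinery (or something equivalent) described above.
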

An extra  assumption   $0\leq s_1< 1$  is imposed in Theorem 1.2.   In fact,  if $s_1\geq 1$ the linear part of (\ref{1.1}) would be a regularity-loss type as discussed in \cite{DR}, \cite{JLF}.  For the case $s_1\geq 1$ so far we can only obtain the decay estimation with small initial data.

The rest of paper is organized as follows. In Section 2, we give some preliminary lemmas. In Section 3, we establish some decay estimates for Green's function related to (\ref{1.1j}). In Section 4, we get the local existence for the solution of (\ref{1.1}). Finally,  we prove our main results Theorem 1.1 and Theorem 1.2 in Section 5.

\section{Preliminaries}
In this section, we give some preliminary lemmas. 
\begin{lemma}\label{l2.1}
	(Refer to \cite{S2}, \cite{J}) Assume 
	\[g\in W^{l,p_2}(\R^n)\bigcap L^{p_1}(\R^n) \text{ and }h\in W^{l,q_1}(\R^n)\bigcap L^{q_2}(\R^n), 1\leq p_1,p_2,q_1,q_2\leq  \infty,l>0.\] Then there exists a constant $C>0$ such that 
\begin{eqnarray}
\|\La ^l(gh)\|_{L^r}\leq C(\|g\|_{L^{p_1}}\|\La^l h\|_{L^{q_1}}+\|\La^l g\|_{L^{p_2}}\|h\|_{L^{q_2}}),
\end{eqnarray}
where $\frac{1}{r}=\frac{1}{p_1}+\frac{1}{q_1}=\frac{1}{p_2}+\frac{1}{q_2}.$
	\end{lemma}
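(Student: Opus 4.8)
The plan is to establish this fractional Leibniz rule (a Kato--Ponce type inequality) by combining a Littlewood--Paley decomposition with Bony's paraproduct splitting, and then estimating each resulting piece by Hölder's inequality together with the square-function characterization of the $L^r$ norm. Throughout, let $\Delta_j$ ($j\in\z$) denote the homogeneous Littlewood--Paley projection onto frequencies $|\xi|\sim 2^j$ and $S_j=\sum_{k<j}\Delta_k$ the corresponding low-frequency cutoff. The key elementary observation is that $\widehat{\Lambda^l\Delta_j w}(\xi)=|\xi|^l\widehat{\Delta_j w}(\xi)$ is supported in the annulus $|\xi|\sim 2^j$, so on each block $\Lambda^l$ is comparable to multiplication by $2^{jl}$ up to a smooth Mikhlin-type multiplier.

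First I would write the Bony decomposition
\[
gh=\sum_j S_{j-1}g\,\Delta_j h+\sum_j S_{j-1}h\,\Delta_j g+\sum_{|j-k|\le 1}\Delta_j g\,\Delta_k h=:\Pi_1+\Pi_2+\Pi_3.
\]
The two paraproducts $\Pi_1,\Pi_2$ are interchanged by swapping $g$ and $h$, so it suffices to bound $\Pi_1$ and the resonant term $\Pi_3$; the estimate for $\Pi_2$ then follows by symmetry and produces the second term on the right-hand side through the exponent relation $1/r=1/p_2+1/q_2$.

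For $\Pi_1$, each summand has Fourier support in the annulus $|\xi|\sim 2^j$, so $\Lambda^l$ acts there like multiplication by $2^{jl}$. I would then invoke the Littlewood--Paley square-function characterization of $\|\cdot\|_{L^r}$ (valid for $1<r<\infty$), bound the low-frequency factor pointwise by the Hardy--Littlewood maximal function $|S_{j-1}g|\lesssim Mg$, pull it out of the square function, and apply Hölder's inequality with $1/r=1/p_1+1/q_1$ together with $\|Mg\|_{L^{p_1}}\lesssim\|g\|_{L^{p_1}}$, to obtain
\[
\|\Lambda^l\Pi_1\|_{L^r}\lesssim\Big\|\Big(\sum_j 2^{2jl}|S_{j-1}g|^2|\Delta_j h|^2\Big)^{1/2}\Big\|_{L^r}\lesssim\|g\|_{L^{p_1}}\Big\|\Big(\sum_j 2^{2jl}|\Delta_j h|^2\Big)^{1/2}\Big\|_{L^{q_1}}\lesssim\|g\|_{L^{p_1}}\|\Lambda^l h\|_{L^{q_1}}.
\]
The resonant piece $\Pi_3$ is the main obstacle: there $\Delta_j g\,\Delta_k h$ with $|j-k|\le 1$ has spectrum only in a ball $|\xi|\lesssim 2^{j+2}$ rather than in a single annulus, so $\Lambda^l$ can no longer be passed through as a clean factor $2^{jl}$. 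To handle this I would group the terms by their output frequency $m$, use that on the block $\Delta_m(\Delta_j g\,\Delta_k h)$ the operator $\Lambda^l$ is controlled by $2^{ml}$, and reassemble the double sum; the convergence of the resulting geometric series (a sum of the form $\sum_{j\ge m}2^{(m-j)l}$ in the output frequency $m$) is exactly where the hypothesis $l>0$ is essential, and after the same maximal-function and Hölder step it yields the same two-term bound.

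Finally, I would treat the endpoint exponents $p_i,q_i\in\{1,\infty\}$, where the square-function equivalence for $L^r$ and the $L^{p}$-boundedness of $M$ break down, by replacing the square function with a direct $\ell^1$ summation of the annular pieces, using Bernstein's inequality and once more the summability guaranteed by $l>0$. Collecting the contributions of $\Pi_1,\Pi_2,\Pi_3$ gives
\[
\|\Lambda^l(gh)\|_{L^r}\lesssim\|g\|_{L^{p_1}}\|\Lambda^l h\|_{L^{q_1}}+\|\Lambda^l g\|_{L^{p_2}}\|h\|_{L^{q_2}},
\]
with constant depending only on $n$, $l$ and the exponents, which is the asserted inequality. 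I expect the bookkeeping of the resonant term and the verification that each auxiliary multiplier satisfies a Mikhlin--Coifman--Meyer condition to be the only genuinely delicate points.
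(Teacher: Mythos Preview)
The paper does not actually prove Lemma~\ref{l2.1}: it simply records the inequality and refers the reader to \cite{S2} and \cite{J}. Your proposal therefore goes well beyond what the paper does, supplying the standard modern proof via Littlewood--Paley theory and Bony's paraproduct decomposition. That approach is correct in the main range $1<r<\infty$, $1<p_i,q_i<\infty$, and your identification of the resonant piece $\Pi_3$ as the place where the hypothesis $l>0$ enters is exactly right.

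One caution: the lemma as stated in the paper allows the full range $1\le p_i,q_i\le\infty$, and hence $r$ can hit $1$ or $\infty$ (or even fall below $1$). Your endpoint paragraph is too optimistic here. At $r=\infty$ the square-function characterization is unavailable and a genuinely different argument is needed; at $r\le 1$ one must work in Hardy spaces or use the more recent endpoint results of Grafakos--Oh and Bourgain--Li, and a simple $\ell^1$ summation with Bernstein does not close the estimate. If you only need the inequality in the reflexive range (which is in fact all the paper ever uses, since its applications take $r=2$ with $p,q\in(1,\infty)$), your argument is complete; if you want the full stated range, you should either restrict the exponents or cite the endpoint literature explicitly, just as the paper does.
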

	\begin{lemma}\label{l2.9} Assume $u\in W^{l,p}(\R^n)\bigcap L^{q\theta}(\R^n)$.
	For all $\theta,l>0,\theta\in \mathbb{Z}$	\begin{eqnarray}\label{2.3j}
		\|\La^l (u^{\theta+1})\|_{L^r}\leq C\|\Lambda^lu\|_{L^p}(\|u^\theta\|_{L^q}+\|u^{\theta-1}\|_{L^q}\|u\|_{L^q}+\cdots+\|u\|^\theta_{L^q})
		\end{eqnarray}
		where $\frac{1}{p}+\frac{1}{q}=\frac{1}{r}$.
	\end{lemma}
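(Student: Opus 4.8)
The plan is to prove Lemma \ref{l2.9} as a corollary of the fractional product rule in Lemma \ref{l2.1} by induction on $\theta$. Write $u^{\theta+1} = u \cdot u^{\theta}$ and apply Lemma \ref{l2.1} with $g = u$, $h = u^{\theta}$, choosing the exponents so that $\Lambda^l$ falls either on the single factor $u$ or on the power $u^\theta$; the interpolation exponents are dictated by $\tfrac1p + \tfrac1q = \tfrac1r$ together with auxiliary exponents absorbed into the $L^q$-type factors. The base case $\theta = 1$ is exactly Lemma \ref{l2.1} with $g=h=u$, $p_1=p_2=p$, $q_1=q_2=q$, giving $\|\Lambda^l(u^2)\|_{L^r}\le 2C\|\Lambda^l u\|_{L^p}\|u\|_{L^q}$, which matches the claimed right-hand side.

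For the inductive step, suppose the estimate holds for $\theta - 1$. Apply Lemma \ref{l2.1} to $u^{\theta+1} = u\cdot u^{\theta}$:
\begin{eqnarray*}
\|\Lambda^l(u^{\theta+1})\|_{L^r}\le C\bigl(\|u\|_{L^{q}}\|\Lambda^l(u^{\theta})\|_{L^{r'}}+\|\Lambda^l u\|_{L^p}\|u^{\theta}\|_{L^{q}}\bigr),
\end{eqnarray*}
where $r'$ is chosen with $\tfrac1q+\tfrac1{r'}=\tfrac1r$, hence $\tfrac1{r'}=\tfrac1p - \tfrac{\theta-1}{q}$ so that the induction hypothesis applies at level $\theta$ replaced by $\theta-1$ with output exponent $r'$. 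The second term is already of the desired form. To the first term apply the induction hypothesis:
\begin{eqnarray*}
\|\Lambda^l(u^{\theta})\|_{L^{r'}}\le C\|\Lambda^l u\|_{L^p}\bigl(\|u^{\theta-1}\|_{L^q}+\cdots+\|u\|_{L^q}^{\theta-1}\bigr),
\end{eqnarray*}
and multiplying by $\|u\|_{L^q}$ produces the terms $\|u^{\theta-1}\|_{L^q}\|u\|_{L^q}+\cdots+\|u\|_{L^q}^{\theta}$. Together with the $\|u^{\theta}\|_{L^q}$ term from the second piece, the full telescoping sum $\|u^{\theta}\|_{L^q}+\|u^{\theta-1}\|_{L^q}\|u\|_{L^q}+\cdots+\|u\|_{L^q}^{\theta}$ appears, each multiplied by $\|\Lambda^l u\|_{L^p}$, which is exactly \eqref{2.3j}.

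The main obstacle is purely bookkeeping: one must verify that at each stage of the induction the intermediate exponents $r'$ lie in $[1,\infty]$ and that the membership hypotheses needed to invoke Lemma \ref{l2.1} (namely $u\in W^{l,p}$ together with the relevant $L^{q\theta}$-type integrability of the powers of $u$, which follows from $u\in L^{q\theta}(\R^n)$ by the generalized Hölder inequality) are actually satisfied; no new analytic input is required beyond Lemma \ref{l2.1}. I would also remark that the constant $C$ in \eqref{2.3j} depends on $\theta$, since each inductive step multiplies it by the constant from Lemma \ref{l2.1}, but this is harmless for the applications. An alternative, slightly slicker route avoiding induction is to apply the general Leibniz-type estimate for $\Lambda^l$ acting on a product of $\theta+1$ identical factors directly; either way the combinatorics yield the same symmetric sum on the right-hand side.
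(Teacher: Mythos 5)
Your proof follows essentially the same route as the paper's: induction on $\theta$, with the base case and each inductive step reduced to the product estimate of Lemma \ref{l2.1} applied to $u^{\theta+1}=u\cdot u^{\theta}$, which is exactly the paper's argument (there the product is written as $u^{k+1}\cdot u$, the same split). One slip to correct: with your choice $\tfrac1q+\tfrac1{r'}=\tfrac1r$ and $\tfrac1p+\tfrac1q=\tfrac1r$ one gets $\tfrac1{r'}=\tfrac1p$, not $\tfrac1p-\tfrac{\theta-1}{q}$, so the induction hypothesis must be re-invoked with an admissible exponent pair summing to $\tfrac1{r'}$ (and the Lebesgue exponents of the bracketed norms adjust accordingly); this is precisely the exponent bookkeeping that the paper's own proof also glosses over when it bounds $\|\La^l u^{k+1}\|_{L^p}$ by the hypothesis stated for the exponent $r$, so your argument is at the same level of rigor as the published one.
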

	\begin{proof}
		From Lemma \ref{l2.1}, we have (\ref{2.3j}) for $\theta=1$, a. e.,
		\begin{eqnarray}
		\|\La^l (u^{2})\|_{L^r}\leq C\|\Lambda^lu\|_{L^p}\|u\|_{L^q}.
		\end{eqnarray}
	Assume that (\ref{2.3j}) holds  for $\theta=k$, then
		\begin{eqnarray}
		\|\La ^l(u^ku)\|_{L^r}\leq c(\|u^{k}\|_{L^q}+\|u^{k-1}\|_{L^q}\|u\|_{L^q}+\cdots+\|u\|^k_{L^q})\|\Lambda^l u\|_{L^p}.
		\end{eqnarray}
		Thus for $\theta=k+1$, \begin{equation}
		\begin{split}
		\|\La ^l(u^{k+1}u)\|_{L^r}&\leq C(\|u^{k+1}\|_{L^q}\|\La^l u\|_{L^p}+\|\Lambda^l u^{k+1}\|_{L^p}\|u\|_{L^q})
		\\&\leq c(\|u^{k+1}\|_{L^q}+\|u^k\|_{L^q}\|u\|_{L^q}+\|u^{k-1}\|_{L^q}\|u\|^2_{L^q}+\cdots+\|u\|^{k+1}_{L^q})\|\Lambda^l u\|_{L^p}.
		\end{split}
		\end{equation}
		Using an induction argument, we have (\ref{2.3j})  for the general case.
		\end{proof}
		Lemma \ref{l2.1} and Lemma \ref{2.3j} are used to deal with the estimation for the non-linear term $u^{\theta+1}$.  In the following we give Galiardo-Nirenberg inequality which is known as the interpolation inequality.
\begin{lemma}\label{l2.4}
(Galiardo-Nirenberg inequality, \cite{T1})
Suppose that $u\in L^q(\mathbb R^n)\cap W^{m,r}(\mathbb R^n)$, where $1\leq q, r\leq\infty$. Then there exists a constant $C>0$, such that
\begin{eqnarray}\|D^j	u\|_{L^p}\leq C\|D^mu\|^a_{L^r}\|u\|^{1-a}_{L^q},
\end{eqnarray}
where
$$\frac{1}{p}=\frac{j}{n}+a\left(\frac{1}{r}-\frac{m}{n}\right)+(1-a)\frac{1}{q},$$
$1\leq p\leq\infty$, $j$ is an integer, $0\leq j\leq m, j/m\leq a\leq 1$. If $m-j-n/r$ is a nonnegative integer, then the inequality holds for $j/m\leq a<1$.
\end{lemma}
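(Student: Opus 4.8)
The plan is to prove the inequality in three stages — a multidimensional base case for $j=0,\ m=1$, a one-dimensional interpolation lemma for a single intermediate derivative, and an induction on the order of differentiation — after first using scaling to see that the stated relation among the exponents is exactly the homogeneity forced on the two sides. Concretely, replacing $u$ by $u_\lambda(x)=u(\lambda x)$ multiplies $\|D^j u\|_{L^p}$ by $\lambda^{j-n/p}$ and the right-hand side by $\lambda^{a(m-n/r)-(1-a)n/q}$; equating the two powers reproduces the identity for $1/p$. This shows the inequality can only hold with the given exponent, and it lets us normalize $\|D^m u\|_{L^r}=\|u\|_{L^q}=1$ and work with $u\in C_c^\infty(\R^n)$ by density.

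First I would establish the Gagliardo base estimate $\|u\|_{L^{n/(n-1)}(\R^n)}\le C\prod_{i=1}^n\|\partial_i u\|_{L^1(\R^n)}^{1/n}$. This is the classical iterated-integration argument: write $|u(x)|\le\int_{\R}|\partial_i u|\,dx_i$ for each $i$, multiply the $n$ one-dimensional bounds, and integrate in $x_1,\dots,x_n$ successively, applying the generalized H\"older inequality at each step. Applying this to $|u|^{t}$ for a suitable power $t$ upgrades it to the first-order Sobolev inequality $\|u\|_{L^{r^*}}\le C\|\nabla u\|_{L^r}$ with $1/r^*=1/r-1/n$. Inserting an extra factor by H\"older interpolation, $\|u\|_{L^p}\le\|u\|_{L^{r^*}}^{a}\|u\|_{L^q}^{1-a}$ with $1/p=a/r^*+(1-a)/q$, then produces the parameter $a$ and proves the statement in the first-order case $j=0,\ m=1$.

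Next I would prove the one-dimensional interpolation lemma $\|f'\|_{L^p(\R)}\le C\|f''\|_{L^r}^{a}\|f\|_{L^q}^{1-a}$ by a local estimate: on an interval of length $h$ one controls $f'$ by the mean of $f$ over the interval plus $h$ times the size of $f''$ (a mean-value/Taylor form), then optimizes over $h$ and sums over a partition. Fubini applied coordinate by coordinate lifts this to $\R^n$ and yields the estimate for $D^j$ in terms of $D^{j+1}$ and $D^{j-1}$. Finally, an induction on $m$ (and on $j$) chains these intermediate-derivative estimates together; using the multiplicativity of the exponents, the telescoping product collapses into the single inequality with the claimed $a$, the first-order case of the previous stage anchoring the induction.

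The main obstacle is the borderline behaviour at the top endpoint. When $m-j-n/r$ is a nonnegative integer the relevant embedding is the critical Sobolev one (for which $W^{k,n}\not\hookrightarrow L^\infty$), so the value $a=1$ must be excluded and only $j/m\le a<1$ survives — this is precisely the restriction recorded in the statement, and identifying exactly when it is forced is the delicate bookkeeping of the argument. The lower endpoint $a=j/m$, which reduces to a pure interpolation among derivative orders with no gain of integrability, together with the justification that the fractional-power interpolation in the parameter $a$ is legitimate, also require care; by contrast, tracking the constants $C$ through the induction is routine.
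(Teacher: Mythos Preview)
The paper does not actually prove this lemma: it is stated as the classical Gagliardo--Nirenberg inequality and simply cited from Triebel's book \cite{T1}, with no argument given. So there is no ``paper's own proof'' to compare against.

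Your outline is the standard Nirenberg route and is correct in structure: the scaling computation pinpoints the admissible exponent relation, the iterated-integration argument gives the $W^{1,1}\hookrightarrow L^{n/(n-1)}$ base, the power trick bootstraps to general $r$, the one-dimensional mean-value/optimization lemma handles a single intermediate derivative, and induction assembles the general case. You have also correctly flagged the critical-case exclusion $a=1$ when $m-j-n/r\in\mathbb{N}_0$. As a sketch this is fine; in a paper that merely quotes the result, supplying this much detail is already more than the original does.
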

Next we will give a lemma which plays important roles in  making estimates on the low frequency part of the Green's function in Section 3.
\begin{lemma}\label{l3.1}
	({Lemma 3.1 in \cite{WY}})
	If $\hat f(t,\xi)$ has compact support in the variable $\xi,N$ is a positive integer, and there exists a constant $b>0$, such that $\hat f(t,x)$ satisfies 
	\begin{equation}
	|D^\beta (\xi^\alpha\hat f(t,\xi))|\leq c(|\xi|^{(|\alpha|+k-|\beta|)_+}+|\xi|^{|\alpha|+k}t^{\frac{|\beta|}{2}})(1+t|\xi|^{2})^me^{-b|\xi|^{2}t},
	\end{equation}
	for any two multi-indexes $\alpha,\beta$ with $|\beta|\leq 2N$, then
	\begin{equation}
	|D^\alpha_x f(t,x)|\leq C_Nt^{-\frac{(n+|\alpha|+k)}{2}}B_N(t,|x|),
	\end{equation}
	where $k, m\in \N$, $(u)_+=\max\{0,u\}$, $B_N(t,|x|)=(1+\frac{|x|^{2}}{(1+t)})^{-N}$ and $N$ is any positive integer.
\end{lemma}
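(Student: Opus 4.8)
The plan is to read the pointwise bound off the inverse Fourier transform, trading spatial decay (the factor $B_N$) for $\xi$-derivatives of $\hat f$ and trading powers of $t$ through parabolic scaling. Since $D^\alpha_x f(t,x)=c\int_{\R^n}(i\xi)^\alpha\hat f(t,\xi)e^{ix\cdot\xi}\,d\xi$, it suffices to establish two estimates and then combine them: an unweighted sup bound $|D^\alpha_x f(t,x)|\le C\,t^{-(n+|\alpha|+k)/2}$, and a weighted bound $|x|^{2N}|D^\alpha_x f(t,x)|\le C(1+t)^{N}t^{-(n+|\alpha|+k)/2}$. These yield the claim at once, because the elementary inequality $\left(1+\tfrac{|x|^2}{1+t}\right)^{N}\le C_N\left(1+\tfrac{|x|^{2N}}{(1+t)^N}\right)$ lets me bound $B_N^{-1}|D^\alpha_x f|$ by the sum of the two right-hand sides.

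For the unweighted bound I would apply the hypothesis with $\beta=0$. Then $(|\alpha|+k-|\beta|)_+=|\alpha|+k$ and $t^{|\beta|/2}=1$, so both terms collapse to $|\xi|^{|\alpha|+k}$, giving
$$|D^\alpha_x f(t,x)|\le c\int_{\R^n}|\xi|^{|\alpha|+k}(1+t|\xi|^2)^m e^{-b|\xi|^2 t}\,d\xi.$$
The substitution $\eta=\sqrt t\,\xi$ converts this into $t^{-(n+|\alpha|+k)/2}$ times the finite Gaussian integral $\int_{\R^n}|\eta|^{|\alpha|+k}(1+|\eta|^2)^m e^{-b|\eta|^2}\,d\eta$, which is exactly $C\,t^{-(n+|\alpha|+k)/2}$.

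The weighted bound is the heart of the argument. I would use the identity $|x|^{2N}e^{ix\cdot\xi}=(-\Delta_\xi)^N e^{ix\cdot\xi}$ and integrate by parts $2N$ times; because $\hat f(t,\cdot)$ has compact support in $\xi$, all boundary terms vanish, leaving $|x|^{2N}D^\alpha_x f(t,x)=c\int_{\R^n}(-\Delta_\xi)^N[(i\xi)^\alpha\hat f(t,\xi)]\,e^{ix\cdot\xi}\,d\xi$. Writing $(-\Delta_\xi)^N$ as a combination of $D^\beta_\xi$ with $|\beta|=2N$ and inserting the hypothesis bounds the integrand by $c(|\xi|^{(|\alpha|+k-2N)_+}+|\xi|^{|\alpha|+k}t^{N})(1+t|\xi|^2)^m e^{-b|\xi|^2 t}$. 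Applying the same scaling $\eta=\sqrt t\,\xi$ to each term produces $C\,t^{-(n+(|\alpha|+k-2N)_+)/2}$ and $C\,t^{N}t^{-(n+|\alpha|+k)/2}$ respectively, and a short case distinction shows that both are dominated by $(1+t)^{N}t^{-(n+|\alpha|+k)/2}$.

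The step I expect to be the most delicate is matching the two competing terms of the hypothesis after scaling. The $t^{N}$-term scales cleanly and is controlled since $t^N\le(1+t)^N$. The $(|\alpha|+k-2N)_+$-term must be split into the cases $|\alpha|+k\ge 2N$, where the exponent is $|\alpha|+k-2N$ and the power of $t$ recombines exactly into $t^{N}t^{-(n+|\alpha|+k)/2}$, and $|\alpha|+k<2N$, where the exponent is $0$ and one must verify $t^{-n/2}\le C(1+t)^N t^{-(n+|\alpha|+k)/2}$ uniformly in $t>0$; this last inequality reduces to $t^{(|\alpha|+k)/2}\le C(1+t)^N$, which holds precisely because $(|\alpha|+k)/2<N$ in this case. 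A secondary point requiring care is the justification of the integration by parts: this is exactly where the compactness of the $\xi$-support of $\hat f$ is indispensable, since it both annihilates the boundary contributions and guarantees the absolute convergence, hence the finiteness of every constant, in all the integrals that appear.
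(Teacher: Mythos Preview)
Your argument is correct. The paper does not actually supply a proof of this lemma: it is quoted verbatim as Lemma~3.1 of Wang--Yang~\cite{WY} and used as a black box, so there is no ``paper's own proof'' to compare against. That said, your method---obtain the unweighted bound from $\beta=0$, obtain the $|x|^{2N}$-weighted bound by writing $|x|^{2N}e^{ix\cdot\xi}=(-\Delta_\xi)^N e^{ix\cdot\xi}$ and integrating by parts (compact support killing the boundary terms), then take the minimum and convert it to the $B_N$ factor---is exactly the mechanism the paper itself employs when it proves the analogous pointwise bounds for $G_2$ and $G_3$ in Propositions~\ref{p2} and~\ref{p3}; see in particular the passage from \eqref{3.24}--\eqref{3.27j} through \eqref{3.29jj}--\eqref{3.29j}. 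Your handling of the delicate case $|\alpha|+k<2N$, reducing to $t^{(|\alpha|+k)/2}\le C(1+t)^N$, is the right way to close that gap.
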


Finally we give an inequality which can also be thought as a interpolation inequality.
\begin{lemma}\label{l2.5}
	Let $D\subset \R^n$, for all $r_2\geq r_1>0$.
	\begin{equation}\label{2.9}
	\int_{D}( |\xi|^{r_1} \hat u)^2d\xi\leq (\int_{D}(|\xi|^{r_2} \hat u)^2d\xi)^a( \int_{D}\hat u^2d\xi) ^{1-a}
	\end{equation}
where $a=\frac{r_1}{r_2}$.
\end{lemma}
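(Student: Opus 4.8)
The plan is to obtain \eqref{2.9} from a single application of H\"older's inequality, after rewriting the integrand on the left as a product of powers of the two integrands on the right. Put $a=r_1/r_2$; since $r_2\ge r_1>0$ we have $0<a\le 1$, and if $a=1$ the inequality is a trivial identity, so I assume $0<a<1$. Then $p=1/a$ and $q=1/(1-a)$ satisfy $1/p+1/q=1$, i.e.\ they form a conjugate pair of H\"older exponents.

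The first step is the pointwise identity
\[
\bigl(|\xi|^{r_1}\hat u(\xi)\bigr)^2=\Bigl(\bigl(|\xi|^{r_2}\hat u(\xi)\bigr)^2\Bigr)^{a}\,\bigl(\hat u(\xi)^2\bigr)^{1-a},
\]
which is valid because $2r_1=2ar_2$ (so the powers of $|\xi|$ on the two sides agree, using $ar_2=r_1$) and $2a+2(1-a)=2$ (so the powers of $\hat u$ agree). The second step is to integrate this identity over $D$ and apply H\"older's inequality on $D$, with exponents $p$ and $q$, to the two factors $\bigl((|\xi|^{r_2}\hat u)^2\bigr)^{a}$ and $\bigl(\hat u^2\bigr)^{1-a}$. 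Since raising the first factor to the power $p=1/a$ returns $(|\xi|^{r_2}\hat u)^2$ and raising the second to the power $q=1/(1-a)$ returns $\hat u^2$, this yields precisely
\[
\int_{D}\bigl(|\xi|^{r_1}\hat u\bigr)^2\,d\xi\le\Bigl(\int_{D}\bigl(|\xi|^{r_2}\hat u\bigr)^2\,d\xi\Bigr)^{a}\Bigl(\int_{D}\hat u^2\,d\xi\Bigr)^{1-a},
\]
which is \eqref{2.9}.

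I do not expect any genuine obstacle: the statement is just the Lyapunov/interpolation inequality for the moments $\int_{D}|\xi|^{2r}|\hat u|^2\,d\xi$ of the measure $|\hat u|^2\,d\xi$, equivalently the log-convexity of $r\mapsto\log\int_{D}|\xi|^{2r}|\hat u|^2\,d\xi$. The only points needing a word of care are the degenerate cases, and they are all harmless: if $\int_{D}\hat u^2\,d\xi=0$ then $\hat u=0$ a.e.\ on $D$ and both sides vanish; if $\int_{D}(|\xi|^{r_2}\hat u)^2\,d\xi=+\infty$ the right-hand side is $+\infty$ and the bound is vacuous; so one may assume both quantities are finite, and then the H\"older step above applies verbatim.
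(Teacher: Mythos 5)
Your proof is correct and is essentially the paper's own argument: the paper likewise writes $(|\xi|^{r_1}\hat u)^2$ as a product of the two factors and applies H\"older's inequality once with exponents $r_2/r_1$ and its conjugate (the paper's displayed conjugate exponent $\tfrac{r_2}{r_2-2r_1}$ is a typo for $\tfrac{r_2}{r_2-r_1}$, but the intent is the same). Your additional remarks on the degenerate cases are fine and do not change the argument.
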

\begin{proof}
By H\"older inequality, it gives
\begin{equation}
\int_{D}( |\xi|^{r_1} \hat u)^2d\xi\leq \Bigl (\int_{D}( |\xi|^{2r_1}\hat u^{\frac{2r_1}{r_2}})^{\frac{r_2}{r_1}}d\xi\Bigl )^{\frac{r_1}{r_2}} (\int_{D}( \hat u^{2-\frac{2r_1}{r_2}})^{\frac{r_2}{r_2-2r_1}}d\xi\Bigl )^{1-\frac{r_1}{r_2}}.
\end{equation}
Lemma \ref{l2.5} is complete.
\end{proof}
\section{Decay estimates of Green's functions associated with (\ref{1.1})}
In this section, we study the decay property of the Green's function associated with (\ref{1.1}), which satisfies
\begin{eqnarray}\label{2.1}
\begin{cases}
\dis (\partial_t-\frac{\Delta}{(I-\Delta)^{s_1}})G(t,x)=0,\,\,& x\in\R^n,\,t>0,\\
G(0,x)=\delta(x),\,\,&x\in\R^n,
\end{cases}
\end{eqnarray}
where $\delta(x) $ is the Dirac function and $0\leq s_1<1$. Applying the Fourier transform with respect to $x$,  we arrive at the expression \[\hat G(t,\xi)=e^{-\frac{|\xi|^{2}}{(1+|\xi|^2)^{s_1}}t}.\]
Denote  \[G(x,t)=\mathcal{F}^{-1}e^{-\frac{|\xi|^{2}}{(1+|\xi|^2)^{s_1}}t}.\]
So the solution of the Cauchy problem of (\ref{1.1}) has the following integral representation
\begin{equation}\label{3.2}
u=G*u_0-\int^t_0 G(t-\tau,\cdot)*\frac{\text{div}f(u)}{(I-\Delta)^{s_2}}d\tau.
\end{equation}
 We are going to obtain the decay estimates of $\|G(x,t)\|_{L^1} $ and $\|\nabla G(t,x)\|_{L^1}$.  Then we first need to get the point wise estimates of $G(t,x)$. 
Let \begin{eqnarray}\label{j3}
\chi_1(\xi)=\begin{cases}
1,&|\xi|\leq \delta,\\ 
0,&|\xi|\geq 2\delta ;
\end{cases}\ \ \ \ \chi_3(\xi)=\begin{cases}
1,&|\xi|\geq R,\\ 
0,&|\xi|\leq  R-1 ;
\end{cases}
\end{eqnarray}
be the smooth cut-off functions with $R>2,\delta>0$. Set 
\[\chi_2(\xi)=1-\chi_1(\xi)-\chi_3(\xi),\] and \begin{equation}\label{3.4jj}
\hat G_i(\xi,t)=\chi_i(\xi)\hat G(\xi,t),\,\, i=1,2,3.
\end{equation}


Thanks to Lemma \ref{l3.1}, we can get the following estimate for $G_1(t,x)$.
\begin{proposition}\label{p1}
	For sufficiently small $\delta$, there exists a constant $c>0$ such that \begin{equation}
	|D^\alpha G_1(t,x)|\leq c t^{-(n+|\alpha|)/2}B_N(t,|x|)
	\end{equation}
	where  $\alpha$ is a multi-index.
\end{proposition}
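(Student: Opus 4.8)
The plan is to reduce Proposition \ref{p1} to a direct application of Lemma \ref{l3.1} with $k=0$. Recall that $\hat G_1(\xi,t)=\chi_1(\xi)\hat G(\xi,t)=\chi_1(\xi)e^{-\phi(\xi)t}$, where $\phi(\xi)=|\xi|^2(1+|\xi|^2)^{-s_1}$ and $\chi_1$ is supported in $\{|\xi|\le 2\delta\}$. Two elementary facts about the symbol $\phi$ on this support drive everything. First, since $\phi(\xi)=|\xi|^2-s_1|\xi|^4+O(|\xi|^6)$ near the origin, by choosing $\delta$ sufficiently small (so that $(1+4\delta^2)^{-s_1}\ge\tfrac12$) we get $\phi(\xi)\ge b|\xi|^2$ on $\mathrm{supp}\,\chi_1$ with $b=\tfrac12$, hence $e^{-\phi(\xi)t}\le e^{-b|\xi|^2t}$. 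Second, $\phi\in C^\infty(\R^n)$ and vanishes to exactly second order at $\xi=0$, so $|D^\gamma\phi(\xi)|\le C_\gamma|\xi|^{(2-|\gamma|)_+}$ for $|\xi|\le 2\delta$ and every multi-index $\gamma$ (a bound whose shape matches the hypotheses of Lemma \ref{l3.1}).

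With these in hand, the core of the argument is to verify the hypothesis of Lemma \ref{l3.1}, i.e.\ that for every pair of multi-indices $\alpha,\beta$ with $|\beta|\le 2N$,
\[
|D^\beta(\xi^\alpha\hat G_1(\xi,t))|\le c\bigl(|\xi|^{(|\alpha|-|\beta|)_+}+|\xi|^{|\alpha|}t^{|\beta|/2}\bigr)(1+t|\xi|^2)^m e^{-b|\xi|^2t}
\]
for a suitable $m=m(N)$. I would expand $D^\beta(\xi^\alpha\chi_1 e^{-\phi t})$ by the Leibniz rule into a sum of terms $(D^{\beta_1}\chi_1)(D^{\beta_2}\xi^\alpha)(D^{\beta_3}e^{-\phi t})$ with $\beta_1+\beta_2+\beta_3=\beta$. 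The terms with $\beta_1\ne 0$ are supported in the annulus $\{\delta\le|\xi|\le 2\delta\}$, where $e^{-b|\xi|^2t}\le e^{-b\delta^2 t}$ decays exponentially and absorbs any polynomial factor in $t$; these are harmless. For the remaining terms ($\beta_1=0$, where $\chi_1\equiv 1$), $D^{\beta_2}\xi^\alpha$ contributes a factor $|\xi|^{|\alpha|-|\beta_2|}$ and vanishes unless $\beta_2\le\alpha$, while by the Fa\`a di Bruno formula $D^{\beta_3}e^{-\phi t}$ is a finite sum of terms $t^{j}\bigl(\prod_i D^{\gamma_i}\phi\bigr)e^{-\phi t}$ with $\sum_i|\gamma_i|=|\beta_3|$, each $|\gamma_i|\ge 1$, and $1\le j\le|\beta_3|$. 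Using $|D^{\gamma_i}\phi|\lesssim|\xi|^{(2-|\gamma_i|)_+}$ and $e^{-\phi t}\le e^{-b|\xi|^2 t}$, each such term is bounded by $c\,t^{j}|\xi|^{|\alpha|-|\beta_2|+E}e^{-b|\xi|^2 t}$, where $E\ge 0$ is the number of size-one blocks among the $\gamma_i$. One then checks, splitting according to whether $t|\xi|^2\lesssim 1$ or $\gtrsim 1$, absorbing factors $(t|\xi|^2)$ into $(1+t|\xi|^2)^m$ (with $m=2N$) and factors $(t|\xi|^2)e^{-b t|\xi|^2/2}$ into the exponential, that every monomial $t^{j}|\xi|^{|\alpha|-|\beta_2|+E}$ so produced is dominated by $\bigl(|\xi|^{(|\alpha|-|\beta|)_+}+|\xi|^{|\alpha|}t^{|\beta|/2}\bigr)(1+t|\xi|^2)^m$, uniformly in $t>0$ and $|\xi|\le 2\delta$. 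Granting this, Lemma \ref{l3.1} with $k=0$ applies to $\hat G_1$ and yields exactly $|D^\alpha G_1(t,x)|\le c\,t^{-(n+|\alpha|)/2}B_N(t,|x|)$.

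The main obstacle is precisely the bookkeeping in the last step: tracking how each $\xi$-derivative acting on $\hat G_1$ either lowers the available power of $|\xi|$ (when it falls on $\xi^\alpha$ or on a factor $D^{\gamma_i}\phi$) or raises the power of $t$ (when it creates a fresh factor $-t\,D^{\gamma}\phi$ out of the exponential), and then checking that the resulting mixed monomials never exceed the two-term envelope of Lemma \ref{l3.1}. The interplay of the regimes $t|\xi|^2\le 1$ and $t|\xi|^2\ge 1$, together with the exact role of the modulating factor $(1+t|\xi|^2)^m$ in the former, is the delicate point; once the symbol estimate $|D^\gamma\phi|\lesssim|\xi|^{(2-|\gamma|)_+}$ and the lower bound $\phi\gtrsim|\xi|^2$ are in place, everything else is routine Leibniz/Fa\`a di Bruno combinatorics.
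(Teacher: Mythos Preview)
Your proposal is correct and follows essentially the same route as the paper: verify the derivative hypothesis of Lemma \ref{l3.1} (with $k=0$) for $\hat G_1$ on the compact support $\{|\xi|\le 2\delta\}$, then invoke the lemma. The paper is terser---it records the Taylor expansion $\phi(\xi)=|\xi|^2+O(|\xi|^4)$ and simply asserts the bound on $D^\beta_\xi(\xi^\alpha\hat G_1)$---whereas you spell out the Leibniz/Fa\`a di Bruno combinatorics and the handling of the annular terms from $D^{\beta_1}\chi_1$, but the strategy and the key lemma are identical.
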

\begin{proof}
	For $|\xi|$ being sufficiently small, according to the Taylor expansion we have that\begin{equation}\label{3.8}
	\frac{|\xi|^{2}}{(1+|\xi|^2)^{s_1}}=|\xi|^2(1+O(|\xi|^2))=|\xi|^2+O(|\xi|^4).
	\end{equation}
	Therefore
	\begin{equation}
	\hat G(t,\xi)=e^{-|\xi|^2t}(1+O(|\xi|^4)t).
	\end{equation}
	Noticing that $\hat G(t,\xi)$ is a smooth function to variable $\xi$ near $|\xi|=0$, we get that when $|\beta|\leq 2N$
	\begin{equation}
	|D^\beta_{\xi}(\xi^\alpha \hat G)|\leq c(|\xi|^{(|\alpha|-|\beta|)_+}+|\xi|^{|\alpha|}t^{|\beta|/2})(1+t|\xi|^2)^{|\beta|/2+1}e^{-|\xi|^2t},\text{ for all } |\xi|\leq 2\delta.
	\end{equation}
	It gives that 
	\begin{equation}
	|D^\beta_\xi (\xi^\alpha \hat G_1)|\leq c(|\xi|^{(|\alpha|-|\beta|)_+}+|\xi|^{|\alpha|}t^{|\beta|/2})(1+t|\xi|^2)^{|\beta|/2+1}e^{-|\xi|^2t}.
	\end{equation}
	From Lemma \ref{l3.1}, we have
	\[|D^\alpha G_1(t,x)|\leq ct^{-(n+|\alpha|)/2}B_N(t,|x|).\]
\end{proof}
Next we come to consider the estimation for $G_2(x,t)$.
\begin{proposition}\label{p2}
For fixed $0<\delta<1$ and $R>2$, there exist positive constants $m_0$ and $c$ such that \begin{equation}
|D^\alpha G_2(t,x)|\leq ce^{-\frac{t}{2m_0}}B_N(t,|x|),
\end{equation}
where  $\alpha$ is a multi-index.
\end{proposition}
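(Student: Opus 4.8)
The plan is to estimate $\hat G_2(t,\xi) = \chi_2(\xi)\,e^{-\frac{|\xi|^2}{(1+|\xi|^2)^{s_1}}t}$ on the intermediate frequency annulus $\delta \le |\xi| \le R$ and then invoke Lemma \ref{l3.1}. The crucial observation is that on this bounded, compactly supported region bounded away from the origin, the symbol
\[
\lambda(\xi) := \frac{|\xi|^2}{(1+|\xi|^2)^{s_1}}
\]
is a smooth, strictly positive function of $\xi$, so there is a constant $m_0 > 1$ (depending on $\delta$, $R$, $s_1$) with $\lambda(\xi) \ge 1/m_0$ for all $\xi$ in the support of $\chi_2$. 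First I would split the exponential as
\[
e^{-\lambda(\xi)t} = e^{-\lambda(\xi)t/2}\, e^{-\lambda(\xi)t/2} \le e^{-t/(2m_0)}\, e^{-\lambda(\xi)t/2},
\]
so that the factor $e^{-t/(2m_0)}$ can be pulled out front, leaving $\chi_2(\xi)e^{-\lambda(\xi)t/2}$, which still decays in $\xi$ like a heat-type kernel for the purpose of verifying the hypotheses of Lemma \ref{l3.1}.

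Next I would verify the pointwise symbol bound required by Lemma \ref{l3.1} with $k=0$: for every pair of multi-indices $\alpha, \beta$ with $|\beta| \le 2N$,
\[
\bigl| D^\beta_\xi\bigl(\xi^\alpha \hat G_2(t,\xi)\bigr)\bigr| \le c\bigl(|\xi|^{(|\alpha|-|\beta|)_+} + |\xi|^{|\alpha|} t^{|\beta|/2}\bigr)\,(1+t|\xi|^2)^m\, e^{-b|\xi|^2 t}.
\]
This follows because on the support of $\chi_2$ all derivatives of $\chi_2(\xi)\lambda(\xi)$ are bounded, and differentiating $e^{-\lambda(\xi)t}$ a total of $j$ times produces a factor that is a polynomial in $t$ of degree at most $j$ times bounded smooth functions, controlled by $(1+t|\xi|^2)^m e^{-b|\xi|^2 t}$ after absorbing the $e^{-t/(2m_0)}$ gain (here I would use $|\xi|^2 \ge \delta^2$ to trade powers of $t$ against powers of $t|\xi|^2$, and the uniform lower bound on $\lambda$ to produce genuine exponential decay in $t$). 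Since $\hat G_2$ has compact support in $\xi$, all the necessary moment and regularity conditions of Lemma \ref{l3.1} hold. Then I would write $G_2(t,x) = e^{-t/(2m_0)} \tilde G_2(t,x)$ where $\tilde G_2$ has Fourier transform $\chi_2(\xi)e^{-\lambda(\xi)t/2}$, apply Lemma \ref{l3.1} to $\tilde G_2$ to obtain $|D^\alpha \tilde G_2(t,x)| \le C_N t^{-(n+|\alpha|)/2} B_N(t,|x|)$, and absorb the algebraic factor $t^{-(n+|\alpha|)/2}$ into the exponential $e^{-t/(2m_0)}$ at the cost of enlarging the constant (for $t \ge 1$); for $0 < t \le 1$ the estimate is trivial from boundedness of $\chi_2$ and compact support. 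Adjusting constants gives $|D^\alpha G_2(t,x)| \le c\, e^{-t/(2m_0)} B_N(t,|x|)$.

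The main obstacle is purely bookkeeping: one must check the derivative bound of Lemma \ref{l3.1} carefully, making sure that the polynomial-in-$t$ growth coming from differentiating $e^{-\lambda(\xi)t}$ in $\xi$ is genuinely dominated by $(1+t|\xi|^2)^m e^{-b|\xi|^2t}$ on the annulus. Because $|\xi|$ is bounded above and below on $\mathrm{supp}\,\chi_2$, this is where the uniform positivity $\lambda(\xi) \ge 1/m_0$ does the real work — it lets us convert any leftover power $t^\ell$ into $e^{-t/(2m_0)}$ up to a constant, and the $(1+t|\xi|^2)^m$ factor is harmless since $t|\xi|^2 \le R^2 t$ and $e^{-b|\xi|^2 t}$ with $b$ chosen smaller than $1/(2m_0)$ kills it. No genuinely new idea beyond what was used for $G_1$ in Proposition \ref{p1} is needed; the difference is only that near-origin Taylor expansion is replaced by the uniform ellipticity of $\lambda$ on a compact annulus.
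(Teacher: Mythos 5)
Your strategy works, but with one correction, and it is not the paper's route. The correction: the claimed factorization $G_2(t,x)=e^{-t/(2m_0)}\tilde G_2(t,x)$ with $\widehat{\tilde G}_2=\chi_2(\xi)e^{-\lambda(\xi)t/2}$ is false as an identity, since $e^{-t/(2m_0)}\chi_2(\xi)e^{-\lambda(\xi)t/2}=\chi_2(\xi)e^{-(\lambda(\xi)/2+\frac{1}{2m_0})t}\neq\chi_2(\xi)e^{-\lambda(\xi)t}$ unless $\lambda\equiv 1/m_0$; a pointwise inequality between symbols does not transfer to the kernels. The fix is immediate and in the spirit of what you wrote: define $\widehat{\tilde G}_2(t,\xi)=e^{t/(2m_0)}\hat G_2(t,\xi)=\chi_2(\xi)e^{-(\lambda(\xi)-\frac{1}{2m_0})t}$ and choose $m_0$ so that $\frac{1}{m_0}\leq\min_{\mathrm{supp}\,\chi_2}\lambda$; then $\lambda(\xi)-\frac{1}{2m_0}\geq\lambda(\xi)/2\geq c_0>0$ on the annulus, and your verification of the hypothesis of Lemma \ref{l3.1} goes through for this $\tilde G_2$: $\xi$-derivatives produce at most $t^{|\beta|}$, which is absorbed via $t^{|\beta|}e^{-c_0t/2}\leq Ct^{|\beta|/2}$ (or via $(1+t|\xi|^2)^m$ using $|\xi|\geq\delta$), while $e^{-c_0t/2}\leq e^{-b|\xi|^2t}$ with $b=c_0/(2R^2)$ because $|\xi|\leq R$; the prefactor $|\xi|^{(|\alpha|-|\beta|)_+}+|\xi|^{|\alpha|}t^{|\beta|/2}$ is bounded below on the annulus, so constants absorb everything. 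The case $0<t\leq1$ is indeed elementary from the compact support, as you say.

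For comparison, the paper never invokes Lemma \ref{l3.1} for $G_2$. It first gets $|\hat G_2|\leq ce^{-t/(2m)}$ from the same uniform positivity of the symbol, then proves $|D^\beta_\xi\hat G_2|\leq c(1+t)^{|\beta|}e^{-t/(2m)}$ by induction on $|\beta|$ using the Duhamel formula for the ODE $\partial_t\hat G_2+\frac{|\xi|^2}{(1+|\xi|^2)^{s_1}}\hat G_2=-F$, and finally estimates the moments $x^\beta D^\alpha_xG_2$ directly, taking $\beta=2Ne_i$, trading the factor $(1+t)^{|\beta|}$ for a slightly slower exponential $e^{-t/(2m_0)}$ with $m_0>m$, and recovering the weight through $\min\{(1+t)^N/|x|^{2N},1\}\leq 2^NB_N(t,|x|)$. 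Your version replaces the Duhamel induction by direct Leibniz differentiation of the symbol and delegates the passage from symbol bounds to the spatial weight to Lemma \ref{l3.1}; both proofs rest on exactly the two facts you isolate (uniform lower bound of the symbol and compactness of the annulus), yours being slightly less self-contained but avoiding the induction and the explicit moment computation.
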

\begin{proof}
	For any fixed $\delta<1$,  choosing $m$ sufficiently large such that $m>\frac{1}{\delta^2}$, we obtain
	\begin{equation}
\frac{-|\xi|^2}{(1+|\xi|^2)^{s_1}}\leq -\frac{1}{2m}, \text{ for }|\xi|\geq \delta
	\end{equation}
which implies that
\begin{equation}\label{3.14}
|\hat G_2(t,\xi)|=|\chi_2(\xi)\hat G(t,\xi)|=|\chi_2(\xi)e^{\frac{-|\xi|^2}{(1+|\xi|^2)^{s_1}}t}|\leq ce^{-\frac{1}{2m}t}.
\end{equation}	
Then
\begin{equation}
|G_2(t,x)|\leq c|\int_{\R^n}e^{ix\xi}\hat G_2(t,\xi)d\xi |\leq ce^{-\frac{1}{2m}t}.
\end{equation}
Now we shall give an estimate to $x^\beta G_2(t,x)$ by induction on $\beta$. We claim that  for any multi-index $\beta$,
\begin{equation}\label{3.16}
|D^\beta _\xi \hat G_2(t,\xi)|\leq c(1+t)^{|\beta|} e^{-\frac{t}{2m}}.
\end{equation}
In fact for $ |\beta|=0$, (\ref{3.16}) follows from (\ref{3.14}).
Assume that $|\beta|\leq j-1$, (\ref{3.16}) is satisfied. 
Performing the Fourier transform of (\ref{2.1}) with respect to $x$, it follows
\begin{equation}
\begin{cases}
&\partial _t \hat G(t,\xi)+\frac{|\xi|^2}{(1+|\xi|^2)^{s_1}}\hat G(t,\xi)=0, t>0, \\
&\hat G(0,\xi)=1,
\end{cases}
\end{equation}
then multiplying with $\chi_2(\xi)$, we have
\begin{equation}\label{3.18}
\begin{cases}
&\partial _t \hat G_2(t,\xi)+\frac{|\xi|^2}{(1+|\xi|^2)^{s_1}}\hat G_2(t,\xi)=0, t>0, \\
&\hat G_2(0,\xi)=\chi_2(\xi).
\end{cases}
\end{equation}
Let the operator $D^\beta_\xi$ act on (\ref{3.18}), then we have
\begin{equation}\label{3.19}
\begin{cases}
&\partial _t D^\beta_\xi\hat G_2(t,\xi)+\frac{|\xi|^2}{(1+|\xi|^2)^{s_1}}D^\beta_\xi\hat G_2(t,\xi)=-F(\xi), t>0, \\
&D^\beta_\xi\hat G_2(0,\xi)=a_0.
\end{cases}
\end{equation}
where $a_0$ is a polynomial of $|\xi|$ and 
\begin{equation}
F(\xi)=\sum_{\beta_1+\beta_2=\beta, |\beta_1|\neq 0}\frac{|\beta|!}{|\beta_1|!|\beta_2|!}D^{\beta_1}_\xi(\frac{|\xi|^2}{(1+|\xi|^2)^{s_1}})D^{\beta_2}_\xi\hat G_2(t,
\xi).
\end{equation}
Obviously, 
\begin{equation}\label{3.21j}
|F(\xi)|\leq c|D^{\beta_2}_{\xi}\hat G_2(t,\xi)|\leq c(1+t)^{|\beta_2|} e^{-\frac{t}{2m}}\text{ for all }|\beta_2|\leq |\beta|-1.
\end{equation}
Hence for $|\beta|=j$, 
\begin{equation}
D^{\beta}_\xi\hat G_2(t,\xi)=a_0\hat G_2(t,\xi)-\int^t_0 \hat G_2(t-s,\xi)F(\xi)ds.
\end{equation}
Combing this with (\ref{3.21j}),
 we have 
\begin{equation}
\begin{split}
|D^{\beta}_\xi\hat G_2(t,\xi)|&\leq c e^{-\frac{t}{2m}}+c\int^t_0 e^{-\frac{t-s}{2m}}(1+s
)^{|\beta|-1}e^{-s/2m}ds\\
&\leq c(1+t)^{|\beta|}e^{-\frac{t}{2m}}
\end{split}
\end{equation}
which implies that (\ref{3.16}) is valid for $|\beta|=j$. 
Then for any $\beta$, we have
\begin{equation}\label{3.24}
\begin{split}
|x^\beta D^\alpha_xG_2(t,x)|&\leq c|\int_{\R^n}e^{ix\xi}D^\beta_\xi (\xi^\alpha\hat G_2(t,\xi))d\xi|\\
&\leq e^{-\frac{t}{2m}}\int_{\va\leq |\xi|\leq R}(|\xi|^{|\alpha|}+|\xi|^{||\alpha|-|\beta||})(1+t)^{|\beta|}d\xi\\
&\leq c(1+t)^{|\beta|}e^{-\frac{t}{2m}}\\
&\leq c(1+t)^{|\beta|/2}e^{-\frac{t}{2m_0}}
\end{split}
\end{equation}
for some $m_0>m$. This follows that for $\beta=0$
\begin{equation}
\label{3.25}
\begin{split}
|D^\alpha_xG_2(t,x)|
\leq ce^{-\frac{t}{2m}}
\end{split}\end{equation}
and taking $\beta=\bar\beta_i=2Ne_i,i=1,2,\cdots n$ ($e_i$ is the ith unite coordinate vector) in (\ref{3.24}), it follows
\begin{equation}\label{3.26}
\dis
|D^\alpha_xG_2(t,x)|\leq c\frac{(1+t)^{N}}{\dis\sum_{i=1}^{n}x_i^{2N}}e^{-\frac{t}{2m_0}}\leq c\frac{(1+t)^{N}}{|x|^{2N}}e^{-\frac{t}{2m_0}}\text{ for all } |x|^2\geq (1+t).\end{equation}
then from (\ref{3.25}) and (\ref{3.26}) that
\begin{equation}\label{3.27j}
|D^\alpha_xG_2(t,x)|\leq c\min\{\frac{(1+t)^{N}}{|x|^{2N}},1\}e^{-\frac{t}{2m_0}}.
\end{equation}
Since 
\begin{equation}
1+\frac{|x|^2}{1+t}\leq \begin{cases}
2,&|x|^2\leq 1+t,\\
2\frac{|x|^2}{1+t},&|x|^2\geq 1+t,
\end{cases}
\end{equation}
we have 
\begin{equation}\label{3.29jj}
\min\{\frac{(1+t)^{N}}{|x|^{2N}},1\}\leq 2^N (1+\frac{|x|^2}{(1+t)})^{-N}\leq c B_N(t,|x|),
\end{equation}
which implies 
\begin{equation}\label{3.29j}
|D^\alpha_xG_2(t,x)|
\leq ce^{-\frac{t}{2m}}B_N(t,|x|).
\end{equation}
\end{proof}
\begin{proposition}\label{p3}
		For sufficiently large $R$, there exists a positive constant $c$ such that
	\begin{equation}
	|D^\alpha G_3(t,x)|\leq ct^{\frac{-(n+|\alpha|)}{{2\nu}}}B^{\nu}_N(t,|x|)
\text{ for all } t\geq 1	\end{equation}
	where $B_N^\nu(t,|x|)=(1+\frac{|x|^{2\nu}}{(1+t)})^{-N}$, $\nu=1-s_1$ and $\alpha$ is a multi-index with $|\alpha|\leq 1$.
\end{proposition}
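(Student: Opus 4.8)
The plan is to follow the strategy of the proofs of Propositions~\ref{p1} and \ref{p2}: derive pointwise bounds on $\hat G_3$ and its $\xi$-derivatives that are consistent with the parabolic scaling $\xi\mapsto t^{1/(2\nu)}\xi$ attached to the high-frequency dissipation rate $|\xi|^{2\nu}$, and then split $\R^n$ into the regions $\{|x|^{2\nu}\le 1+t\}$ and $\{|x|^{2\nu}\ge 1+t\}$. Throughout, $\nu=1-s_1\in(0,1]$ because $0\le s_1<1$, and we write $\phi(\xi)=\frac{|\xi|^2}{(1+|\xi|^2)^{s_1}}=|\xi|^{2\nu}(1+|\xi|^{-2})^{-s_1}$. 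Since $\hat G_3(t,\cdot)=\chi_3\,e^{-\phi t}$ is Schwartz for each $t>0$, so is $G_3(t,\cdot)$, and all integrations by parts below are legitimate.

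The first step is a symbol estimate. On $\{|\xi|\ge R-1\}$, for $R$ large, $\phi$ is smooth and satisfies $\phi(\xi)\ge c_0|\xi|^{2\nu}$ and $|D^\beta_\xi\phi(\xi)|\le C_\beta|\xi|^{2\nu-|\beta|}$ for every $\beta$ (this follows from $|\xi|^{2\nu}$ being smooth and positively homogeneous of degree $2\nu$ away from the origin, while $(1+|\xi|^{-2})^{-s_1}$ is smooth, bounded between two positive constants, and has all positive-order derivatives decaying at infinity). Feeding these into the Fa\`a di Bruno formula for $D^\beta_\xi e^{-\phi t}$ (a term with $k$ factors of derivatives of $\phi$ being $\le C\,t^k|\xi|^{2\nu k-|\beta|}e^{-c_0|\xi|^{2\nu}t}$) and then using the Leibniz rule in $\xi$ — the derivatives of $\chi_3$ produce only terms supported on the bounded annulus $R-1\le|\xi|\le R$, where $e^{-\phi t}\le e^{-ct}$ — we would obtain, for every multi-index $\beta$ and every $\alpha$ with $|\alpha|\le1$,
\[
\bigl|D^\beta_\xi\bigl(\xi^\alpha\hat G_3(t,\xi)\bigr)\bigr|\le C\sum_{k=0}^{|\beta|}t^k|\xi|^{|\alpha|+2\nu k-|\beta|}e^{-c|\xi|^{2\nu}t}+Ce^{-ct}\qquad(|\xi|\ge R-1),
\]
for a suitable $c>0$; the $|\xi|$-exponents recombine because differentiating $\xi^\alpha$ raises, and differentiating $e^{-\phi t}$ lowers, the exponent by equal amounts.

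Next I would split into regions. If $|x|^{2\nu}\le1+t$, then $|D^\alpha_x G_3(t,x)|\le C\int_{|\xi|\ge R-1}|\xi|^{|\alpha|}e^{-c|\xi|^{2\nu}t}\,d\xi$, and the substitution $\zeta=t^{1/(2\nu)}\xi$ (admissible since $t\ge1$) bounds this by $Ct^{-(n+|\alpha|)/(2\nu)}\int_{|\zeta|\ge(R-1)t^{1/(2\nu)}}|\zeta|^{|\alpha|}e^{-c|\zeta|^{2\nu}}\,d\zeta\le Ct^{-(n+|\alpha|)/(2\nu)}$, because for $t\ge1$ the last integral is dominated by the finite constant $\int_{|\zeta|\ge R-1}|\zeta|^{|\alpha|}e^{-c|\zeta|^{2\nu}}\,d\zeta$; as $B_N^\nu(t,|x|)\ge2^{-N}$ here, the first region is settled. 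If $|x|^{2\nu}\ge1+t$, fix an integer $M\ge\nu N$ and a coordinate $j$ with $|x_j|\ge|x|/\sqrt n$; integrating by parts $2M$ times in $\xi_j$, inserting the symbol estimate, and rescaling as before, each $k$-term ($0\le k\le2M$) contributes $t^k\cdot t^{-(n+|\alpha|+2\nu k-2M)/(2\nu)}\cdot(\mathrm{const})=t^{M/\nu-(n+|\alpha|)/(2\nu)}\cdot(\mathrm{const})$ independently of $k$, while the exponentially small annulus terms are harmless for $t\ge1$; thus $|x_j|^{2M}|D^\alpha_x G_3(t,x)|\le Ct^{M/\nu-(n+|\alpha|)/(2\nu)}$, and therefore
\[
|D^\alpha_x G_3(t,x)|\le C\,n^{M}\,t^{-\frac{n+|\alpha|}{2\nu}}\Bigl(\frac{t}{|x|^{2\nu}}\Bigr)^{M/\nu}\le C\,t^{-\frac{n+|\alpha|}{2\nu}}\Bigl(\frac{1+t}{|x|^{2\nu}}\Bigr)^{N},
\]
using $\frac{t}{|x|^{2\nu}}\le\frac{1+t}{|x|^{2\nu}}\le1$ and $M/\nu\ge N$. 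Since $\frac{1+t}{|x|^{2\nu}}\le1$ also gives $B_N^\nu(t,|x|)\ge2^{-N}\bigl(\frac{1+t}{|x|^{2\nu}}\bigr)^N$, the right-hand side is $\le C\,t^{-(n+|\alpha|)/(2\nu)}B_N^\nu(t,|x|)$, and combining the two regions finishes the proof.

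The main obstacle is the symbol estimate: one must verify that each $\xi$-derivative of the high-frequency symbol $\phi$ gains exactly one power of $|\xi|^{-1}$, so that the Fa\`a di Bruno expansion stays compatible with the scaling $\xi\mapsto t^{1/(2\nu)}\xi$, and that after rescaling the powers of $t$ and $|\xi|$ always recombine into the single exponent $M/\nu-(n+|\alpha|)/(2\nu)$ regardless of how many blocks the partition has. The hypotheses $0\le s_1<1$ (so $\nu>0$, giving genuine high-frequency dissipation and forcing the restriction $t\ge1$) and $|\alpha|\le1$ are exactly what keep this bookkeeping manageable; note also that $M$ may be taken to be $\lceil\nu N\rceil\le N$, so only finitely many integrations by parts are needed.
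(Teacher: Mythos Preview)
Your proof is correct and follows essentially the same strategy as the paper: bound the $\xi$-derivatives of $\hat G_3$ via the high-frequency behavior $\phi(\xi)\sim|\xi|^{2\nu}$, then integrate by parts in the Fourier formula and split according to whether $|x|^{2\nu}\le 1+t$ or $|x|^{2\nu}\ge 1+t$. The one noteworthy difference is that your Fa\`a di Bruno estimate $|D^\beta_\xi(\xi^\alpha\hat G_3)|\lesssim\sum_{k\le|\beta|}t^k|\xi|^{|\alpha|+2\nu k-|\beta|}e^{-c|\xi|^{2\nu}t}$ handles all $\nu\in(0,1]$ at once, whereas the paper carries out an induction that splits into the cases $\nu\ge\tfrac12$ (bound $(|\xi|^{2\nu-1}t)^{|\beta|}$) and $\nu<\tfrac12$ (bound $t^{|\beta|}$); your unified version is a mild streamlining of the same argument.
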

\begin{proof}
	Firstly we  give an estimate of $D_\xi^\beta(\xi^\alpha \hat G_3(t,\xi))$.

	For $|\xi|$ sufficiently large, according to the Taylor expansion we have that\begin{equation}\label{3.31}
	\frac{|\xi|^{2}}{(1+|\xi|^2)^{s_1}}=\frac{|\xi|^{2-2s_1}}{(1+\frac{1}{|\xi|^2})^{s_1}}
	=|\xi|^{2-2s_1}(1+O(\frac{1}{|\xi|^2}))=|\xi|^{2-2s_1}+O(\frac{1}{|\xi|^{2s_1}}),
	\end{equation}
it follows for $|\xi|$ large enough,
	\begin{equation}
	|\hat G(t,\xi)|\leq ce^{-|\xi|^{2-2s_1}t}.
	\end{equation}

	 We claim that  for any multi-index $\beta$,
	 for $\nu=1-s_1\geq \frac{1}{2}$,
	\begin{equation}\label{3.33}
	|D^\beta _\xi \hat G_3(t,\xi)|\leq ce^{-|\xi|^{2\nu}t}(|\xi|^{2\nu-1}t)^{|\beta|},\forall t\geq 1;
	\end{equation}
	 for $\nu=1-s_1< \frac{1}{2}$,
	 \begin{equation}\label{3.33j}
	 |D^\beta _\xi \hat G_3(t,\xi)|\leq ce^{-|\xi|^{2\nu}t}t^{|\beta|},\forall t\geq 1.
	 \end{equation}
	 For the proof of (\ref{3.33}), it is sufficient to prove 
   \begin{equation}\label{3.34}
   |D^\beta _\xi e^{-|\xi|^{2\nu}t}|\leq e^{-|\xi|^{2\nu}t}(|\xi|^{2\nu-1}t)^{|\beta|}, \text{ for all } |\xi|\geq R-1\geq 1,t\geq 1.
   \end{equation}
	In fact for $ |\beta|=0$, obviously (\ref{3.33}) follows.
	By induction method, we assume (\ref{3.33}) is satisfied for all $|\beta|\leq j$. Then for $|\beta|=j+1$, let $\bar \beta $ be any multi-index with $|\bar \beta|=j$, since,  for $ i=1,\cdots,n$, $|\xi|\geq 1$,
	\begin{equation}\label{3.36}
	\begin{split}
	D_\xi^\beta(e^{-|\xi|^{2\nu}t})&=D_\xi^{\bar\beta}(\partial_{\xi_i}(e^{-|\xi|^{2\nu}t}))\\
	&=D_\xi^{\bar\beta}\bigl(e^{-|\xi|^{2\nu}t}(-\xi_i)|\xi|^{2\nu-2}t\bigl)\\
	&\leq c\sum_{|\beta_1|+|\beta_2|=j}D_\xi^{\beta_1}\bigl(e^{-|\xi|^{2\nu}t}\bigl)D_\xi^{\beta_2}\bigl((-\xi_i)|\xi|^{2\nu-2}t\bigl)
\\&	\leq ce^{-|\xi|^{2\nu}t}(|\xi|^{2\nu-1}t)^{j}(|\xi|^{2\nu-1}t)
	=ce^{-|\xi|^{2\nu}t}(|\xi|^{2\nu-1}t)^{j+1} 
	\end{split}
	\end{equation}
	where the last inequality but one  holds because 
	\begin{equation}\label{3.37}
	D_\xi^{\beta_2}\bigl(-\xi_i|\xi|^{2\nu-2}t\bigl)\leq c|\xi|^{2\nu-1}t \text{ for any $|\xi|\geq 1,t\geq 1$.}
	\end{equation}
	Then (\ref{3.34}) follows for $|\beta|=j+1$. The claim (\ref{3.33}) proved for $\nu\geq 1/2$.
	Similar to (\ref{3.34})-(\ref{3.37}), we can obtain (\ref{3.33j}).
	The claim is proved.
	
	For $|\alpha|\leq 1,|\alpha|\leq |\beta|$, since
	\begin{equation}
|	D_\xi^\beta(\xi^\alpha e^{-|\xi|^{2\nu}t})|\leq c|\xi^\alpha 	D_\xi^\beta(e^{-|\xi|^{2\nu}t})|+c|D_\xi^{\bar\beta}(e^{-|\xi|^{2\nu}t})|,
	\end{equation}
	where $\bar\beta$ is a multi-index with $|\bar{\beta}|=|\beta|-1$,
from
(\ref{3.33}) and (\ref{3.33j}), we have	
  for $\nu=1-s_1\geq \frac{1}{2}$,
  \begin{equation}\label{3.39}
  |D^\beta_\xi \bigl(\xi^\alpha\hat G_3(t,\xi)\bigl)|\leq c|\xi|^{|\alpha|} e^{-|\xi|^{2\nu}t}(|\xi|^{2\nu-1}t)^{|\beta|},\forall t\geq 1;
  \end{equation}
  for $\nu=1-s_1< \frac{1}{2}$,
  \begin{equation}\label{3.40}
  |D^\beta_\xi \bigl (\xi^\alpha \hat G_3(t,\xi)\bigl)|\leq c|\xi|^{|\alpha|} e^{-|\xi|^{2\nu}t}t^{|\beta|},\forall t\geq 1.
  \end{equation}
  
 Secondly, we will establish the estimation for $|x^\beta G_3(x,t)|$.
 For $\beta=0$, 
 \begin{equation}\label{3.41}
 \begin{split}
 |D^\alpha_xG_3(t,x)|&\leq c|\int_{\R^n}e^{ix\xi} (\xi^\alpha\hat G_3(t,\xi))d\xi|\\
 &\leq c\int_{|\xi|\geq R-1}e^{-|\xi|^{2\nu}t}|\xi|^\alpha d\xi\\
 &\leq ct^{-\frac{n+|\alpha|}{2\nu}}\int_{\R^n}e^{-|\eta|^{2\nu}}|\eta|^\alpha d\eta \\
 &\leq ct^{-\frac{n+|\alpha|}{2\nu}}.
 \end{split}
 \end{equation}
 For $|\beta|\geq |\alpha|,\nu=1-s_1\geq 1/2$, from (\ref{3.39}),
  \begin{equation}\label{3.42}
  \begin{split}
  |x^\beta D^\alpha_xG_3(t,x)|&\leq c|\int_{\R^n}e^{ix\xi} D^\beta_{\xi}\bigl (\xi^\alpha\hat G_3(t,\xi)\bigl )d\xi|\\
  &\leq c\int_{|\xi|\geq R-1}|\xi|^{|\alpha|}e^{-|\xi|^{2\nu}t}(|\xi|^{2\nu-1}t)^{|\beta|} d\xi\\
  &\leq ct^{-\frac{n+|\alpha|}{2\nu}}\int_{\R^n}e^{-|\eta|^{2\nu}}|\eta|^\alpha |\eta|^{({2\nu}-1)\beta}t^{\frac{|\beta|}{2\nu}} d\eta\\
  &\leq ct^{-\frac{n+|\alpha|}{2\nu}}(1+t)^{\frac{|\beta|}{2\nu}}.
  \end{split}
  \end{equation}
   For $|\beta|\geq |\alpha|, \nu=1-s_1<1/2$, from (\ref{3.40}),
   \begin{equation}\label{3.43}
   \begin{split}
   |x^\beta D^\alpha_xG_3(t,x)|&\leq c|\int_{\R^n}e^{ix\xi} D^\beta_{\xi}\bigl (\xi^\alpha\hat G_3(t,\xi)\bigl )d\xi|\\
   &\leq c\int_{|\xi|\geq R-1}|\xi|^{|\alpha|}e^{-|\xi|^{2\nu}t}t^{|\beta|} d\xi\\
   &\leq ct^{-\frac{n+|\alpha|}{2\nu}}\int_{\R^n}e^{-|\eta|^{2\nu}}|\eta|^\alpha t^{|\beta|} d\eta \\
  & \leq  ct^{-\frac{n+|\alpha|}{2\nu}}(1+t)^{|\beta|}
   \leq ct^{-\frac{n+|\alpha|}{2\nu}}(1+t)^{\frac{|\beta|}{2\nu}}.
   \end{split}
   \end{equation}
	Letting $|\beta|=2N$, from (\ref{3.42}) and (\ref{3.43}), similar to (\ref{3.26}), we have, 
	\begin{equation}\label{3.44}
	\dis
	|D^\alpha_xG_3(t,x)|\leq c\frac{(1+t)^{N/\nu}}{\dis\sum_{i=1}^{n}x_i^{2N}}t^{-\frac{n+|\alpha|}{2\nu}}\leq c\frac{(1+t)^{N/\nu}}{|x|^{2N}}t^{-\frac{n+|\alpha|}{2\nu}}\text{ for all } |x|^{2\nu}\geq (1+t).\end{equation}
	then from (\ref{3.41}) and (\ref{3.44}) that
	\begin{equation}
	\label{3.27}
	|D^\alpha_xG_3(t,x)|\leq c\min\{\frac{(1+t)^{N/\nu}}{|x|^{2N}},1\}t^{-\frac{n+|\alpha|}{2\nu}}.
	\end{equation}
	
	Since 
	\begin{equation}
	1+\frac{|x|^{2\nu}}{1+t}\leq \begin{cases}
	2,&|x|^{2\nu}\leq 1+t,\\
	2\frac{|x|^{2\nu}}{1+t},&|x|^{2\nu}\geq 1+t,
	\end{cases}
	\end{equation}
	we have 
	\begin{equation}\label{3.29}
	|D^\alpha_xG_3(t,x)|
	\leq ct^{-\frac{n+|\alpha|}{2\nu}}B_N^{\nu}(t,|x|)
	\end{equation}
where	$B_N^\nu(t,|x|)=(1+\frac{|x|^{2\nu}}{(1+t)})^{-N}$ and $\nu=1-s_1$.	
\end{proof}
In summary, we have the following theorem on the Green's function.
\begin{theorem}\label{T3.3}
	Assume $0\leq s_1< 1$. For $t\geq 1$
	\begin{equation}\label{3.48}
	\begin{split}
| G(t,x)|&\leq ct^{-\frac{n}{2}}B_N(t,|x|)+ct^{-\frac{n}{2(1-s_1)}}B^{1-s_1}_N(t,|x|),\\
|\nabla G(t,x)|&\leq ct^{-\frac{n+1}{2}}B_N(t,|x|)+ct^{-\frac{n+1}{2(1-s_1)}}B^{1-s_1}_N(t,|x|).
\end{split}
	\end{equation}

	Furthermore, for $t\geq 1$, 
\begin{equation}
\|G(t,x)\|_{L^1}\leq c, \,\,\|\nabla G(t,x)\|_{L^1}\leq ct^{-1/2};
\end{equation}
\begin{equation}
\|G(t,x)\|_{L^2}\leq ct^{-\frac{n}{4}}.
\end{equation}
\end{theorem}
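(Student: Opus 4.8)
The plan is to assemble the pointwise bounds already established for the three frequency pieces and then integrate them. First note that $G=G_1+G_2+G_3$: this follows from $\chi_1+\chi_2+\chi_3\equiv 1$, the definition $\hat G_i=\chi_i\hat G$, and the linearity of the inverse Fourier transform. Adding the bounds of Propositions~\ref{p1}, \ref{p2} and \ref{p3} gives, for $t\ge 1$ and $|\alpha|\le 1$,
\[
|D^\alpha G(t,x)|\le c\,t^{-\frac{n+|\alpha|}{2}}B_N(t,|x|)+c\,e^{-\frac{t}{2m_0}}B_N(t,|x|)+c\,t^{-\frac{n+|\alpha|}{2(1-s_1)}}B_N^{1-s_1}(t,|x|).
\]
Since for $t\ge 1$ the factor $e^{-t/(2m_0)}$ is dominated by any negative power of $t$, the middle term is absorbed into the first one, and taking $|\alpha|=0$ and $|\alpha|=1$ yields exactly the two estimates in \eqref{3.48}.

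For the $L^1$ bounds I would integrate \eqref{3.48} in $x$, using the two scaling identities obtained by the substitutions $x=\sqrt{1+t}\,y$ and $x=(1+t)^{\frac{1}{2(1-s_1)}}y$ respectively:
\[
\int_{\R^n}B_N(t,|x|)\,dx\le c(1+t)^{\frac n2},\qquad \int_{\R^n}B_N^{1-s_1}(t,|x|)\,dx\le c(1+t)^{\frac{n}{2(1-s_1)}},
\]
valid provided $N$ is chosen with $N>n/2$ and $2(1-s_1)N>n$, which is harmless since Propositions~\ref{p1}--\ref{p3} hold for every positive integer $N$. Multiplying by the time weights in \eqref{3.48}: for $\|G(t,\cdot)\|_{L^1}$ the two contributions are $c\,t^{-n/2}(1+t)^{n/2}$ and $c\,t^{-n/(2(1-s_1))}(1+t)^{n/(2(1-s_1))}$, both bounded by a constant for $t\ge 1$; for $\|\nabla G(t,\cdot)\|_{L^1}$ they are $c\,t^{-(n+1)/2}(1+t)^{n/2}\le c\,t^{-1/2}$ and $c\,t^{-(n+1)/(2(1-s_1))}(1+t)^{n/(2(1-s_1))}\le c\,t^{-1/(2(1-s_1))}\le c\,t^{-1/2}$, where the last inequality uses $0\le s_1<1$ and $t\ge 1$.

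For the $L^2$ bound I would square \eqref{3.48}, use $(a+b)^2\le 2a^2+2b^2$, note that $B_N(t,|x|)^2=B_{2N}(t,|x|)$ and $(B_N^{1-s_1}(t,|x|))^2=B_{2N}^{1-s_1}(t,|x|)$, and apply the same scaling identities with $2N$ in place of $N$. This gives
\[
\|G(t,\cdot)\|_{L^2}^2\le c\,t^{-n}(1+t)^{\frac n2}+c\,t^{-\frac{n}{1-s_1}}(1+t)^{\frac{n}{2(1-s_1)}}\le c\,t^{-n/2}\qquad (t\ge 1),
\]
and taking square roots gives $\|G(t,\cdot)\|_{L^2}\le c\,t^{-n/4}$.

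I do not expect a genuine obstacle here: all the analytic difficulty — in particular the case distinction $1-s_1\gtrless 1/2$ inside the high-frequency estimate, and the extraction of the anisotropic weight $B_N^{1-s_1}$ — has already been dealt with in the proof of Proposition~\ref{p3}. What remains is the routine bookkeeping of Gaussian-type scaling integrals described above, the only mild care being the uniform (in $x$) absorption of the exponentially decaying piece $G_2$ for $t\ge 1$ and the elementary comparison $t^{-1/(2(1-s_1))}\le t^{-1/2}$ on $t\ge 1$.
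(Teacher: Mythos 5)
Your proposal is correct and follows essentially the same route as the paper: the pointwise bound \eqref{3.48} is obtained by summing the estimates of Propositions \ref{p1}--\ref{p3} (with the exponentially small $G_2$ piece absorbed for $t\ge 1$), and the $L^1$ and $L^2$ bounds then follow by the same scaling substitutions $x=\sqrt{1+t}\,y$ and $x=(1+t)^{\frac{1}{2(1-s_1)}}y$ with $N$ chosen large enough (e.g. $2(1-s_1)N>n$, resp. $4(1-s_1)N>n$), exactly as in the paper's proof. No gaps.
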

\begin{proof}
	(\ref{3.48}) follows from  Proposition \ref{p1}-Proposition \ref{p2}
	 directly.
	 And from (\ref{3.48}), we have
	 \begin{equation}
	 \begin{split}
	 \|G(x,t)\|_{L^1}&\leq ct^{-\frac{n}{2}} \int_{\R^n}B_N(t,|x|)dx+ct^{-\frac{n}{2(1-s_1)}}\int_{\R^n}B^{1-s_1}_N(t,|x|)dx\\
	 & \leq ct^{-\frac{n}{2}} \int_{\R^n} (1+\frac{|x|^{2}}{(1+t)})^{-N}dx+ct^{-\frac{n}{2(1-s_1)}}\int_{\R^n} (1+\frac{|x|^{2(1-s_1)}}{(1+t)})^{-N}dx	\\
	 &\leq c \int_{\R^n} (1+|y|^2)^{-N}dy+c\int_{\R^n} (1+{|y|^{2(1-s_1)}})^{-N}dy\leq c	
	 \end{split}
	 \end{equation}
	 where the last inequality holds by choosing $2(1-s_1) N>n$.
	 Similarly we have 
	\[\|\nabla G(t,x)\|_{L^1}\leq c(t^{-1/2}+t^{t^{-\frac{1}{2(1-s_1)}}})\leq c t^{-1/2}. \]
	
	Furthermore， for $t\geq 1$
	\begin{equation}
	\begin{split}
	\|G(x,t)\|_{L^2}&\leq ct^{-\frac{n}{2}} \|B_N(t,|x|)\|_{L^2}+ct^{-\frac{n}{2(1-s_1)}}\|B^{1-s_1}_N(t,|x|)\|_{L^2}\\
	& \leq ct^{-\frac{n}{2}} (\int_{\R^n} (1+\frac{|x|^{2}}{(1+t)})^{-2N}dx)^{1/2}+ct^{-\frac{n}{2(1-s_1)}}(\int_{\R^n} (1+\frac{|x|^{2(1-s_1)}}{(1+t)})^{-2N}dx)^{1/2}	\\
	&\leq c t^{-\frac{n}{4}}(\int_{\R^n} (1+|y|^2)^{-2N}dy)^{1/2}+ct^{-\frac{n}{4(1-s_1)}}(\int_{\R^n} (1+{|y|^{2(1-s_1)}})^{-2N}dy)^{1/2}\\&\leq ct^{-\frac{n}{4}}	
	\end{split}
	\end{equation}
	where the last inequality holds by choosing $4(1-s_1) N>n$ .
\end{proof}
\section{Local existence}

In this section, we will construct a convergent sequence to get the local solution. 

Construct a sequence $\{u^m(t,x)\}$ which satisfies the following linear problem
\begin{eqnarray}
\dis
 & \dis \frac{\partial u^{m+1}}{\partial
	 t}-\frac{\Delta}{(I-\Delta)^{s_1}}u^{m+1}=\frac{-\text{div} （\bigl((u^{m})^{\theta+1}\bigl)}{(I-\Delta)^{s_2}}, \label{4.1}\\
&u^{m+1}(0,x)=u_0(x),\label{4.1j}
\end{eqnarray}
for $m\geq 1$ and $u^0(t,x)=0,u_0(x)\in H^s(\R^n)$. We will try to prove that the sequence is convergent in a space we construct and the limit is  the solution of the Cauchy problem (\ref{1.1}).

First we introduce a set of functions as follows. For a given integer
$s>\frac{n}{2}$(where $n$ is the spatial dimension)
\[\mathbb{X}=\{u(t,x)\ \mid \ \|u\|_{\X}<E\},\]
where $\dis\|u\|_{\X}=\sup_{0\leq t\leq T_0}\|u\|_{H^s(\R^n)}$, $T_0>0$ will be determined later and $E=c_0\|u_0\|_{H^s(\R^n)}$, $c_0\geq 4\sqrt{c_1}$ is a positive constant.  The metric in $\X$ is induced by the norm $\|u\|_{\X}$:
\[\rho(u,v)=\|u-v\|_{\X},\forall u,v\in \X.\]
Obviously $\X$ is a non-empty and complete metric space.

\begin{proposition}\label{a}
	Assume that $ 2s_2\geq s_1,s>\frac{n}{2}$.
	There exists some constant $T_0$ sufficiently small such that \[u^m(t,x) \in \X, \ \forall\, m\geq 1.\]\end{proposition}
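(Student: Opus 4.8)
The plan is to prove the claim by induction on $m$, realizing each iterate $u^{m+1}$ through the Duhamel representation of the linear problem (\ref{4.1})--(\ref{4.1j}) on the Fourier side and estimating it in $H^s$ by Plancherel. Since $u^0\equiv0$, the nonlinear term in the equation for $u^1$ vanishes, so $u^1=G*u_0$, i.e. $\widehat{u^1}(t,\xi)=\hat G(t,\xi)\hat u_0(\xi)$ with $\hat G(t,\xi)=e^{-|\xi|^2(1+|\xi|^2)^{-s_1}t}\in(0,1]$; hence $\|u^1\|_{\X}\le\|u_0\|_{H^s}=E/c_0<E$ and $u^1\in\X$. For the inductive step I assume $u^m\in\X$ and estimate $u^{m+1}$.

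Taking the Fourier transform of (\ref{4.1}) and solving the resulting scalar ODE in $t$ gives
\[
\widehat{u^{m+1}}(t,\xi)=\hat G(t,\xi)\hat u_0(\xi)-\int_0^t \hat G(t-\tau,\xi)\,\frac{\widehat{(\mathrm{div}\,(u^m)^{\theta+1})}(\tau,\xi)}{(1+|\xi|^2)^{s_2}}\,d\tau .
\]
Multiplying by $(1+|\xi|^2)^{s/2}$, bounding the first term with $|\hat G|\le1$, the numerator of the second with $|\widehat{(\mathrm{div}\,(u^m)^{\theta+1})}|\le C|\xi|\,|\widehat{(u^m)^{\theta+1}}|$, and then applying Plancherel together with Minkowski's inequality in $\tau$, the whole estimate reduces to the pointwise Fourier-multiplier bound
\[
\hat G(t-\tau,\xi)\,\frac{|\xi|}{(1+|\xi|^2)^{s_2}}\ \le\ C\,(t-\tau)^{-\gamma},\qquad \xi\in\R^n,\ 0<\tau<t,
\]
for some $\gamma\in[0,1)$. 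This is precisely where the hypothesis $2s_2\ge s_1$ enters. For $|\xi|\le1$ one has $|\xi|^2(1+|\xi|^2)^{-s_1}\ge c|\xi|^2$, hence $\hat G(t-\tau,\xi)\,|\xi|\le e^{-c|\xi|^2(t-\tau)}|\xi|\le C(t-\tau)^{-1/2}$. For $|\xi|\ge1$: if $s_2\ge\tfrac12$ then $|\xi|(1+|\xi|^2)^{-s_2}\le1$ and that region contributes $\gamma=0$; if $s_2<\tfrac12$, then $s_1\le2s_2<1$, so $|\xi|^2(1+|\xi|^2)^{-s_1}\ge c|\xi|^{2(1-s_1)}$ and $\hat G(t-\tau,\xi)\,|\xi|^{1-2s_2}\le e^{-c|\xi|^{2(1-s_1)}(t-\tau)}|\xi|^{1-2s_2}\le C(t-\tau)^{-\frac{1-2s_2}{2(1-s_1)}}$, where $\frac{1-2s_2}{2(1-s_1)}<1$ because $2s_1-2s_2\le2s_2<1$. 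Thus $\gamma:=\tfrac12$ works when $s_2\ge\tfrac12$ and $\gamma:=\max\{\tfrac12,\tfrac{1-2s_2}{2(1-s_1)}\}<1$ when $s_2<\tfrac12$, and in either case
\[
\|u^{m+1}(t)\|_{H^s}\ \le\ \|u_0\|_{H^s}+C\int_0^t(t-\tau)^{-\gamma}\,\|(u^m)^{\theta+1}(\tau)\|_{H^s}\,d\tau .
\]

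For the nonlinear factor I would use $s>\tfrac n2$, hence $H^s(\R^n)\hookrightarrow L^\infty(\R^n)$; combined with the equivalent norm (\ref{1.2}) and Lemma \ref{l2.9} applied with $l=s$, $r=p=2$, $q=\infty$, this gives $\|(u^m)^{\theta+1}(\tau)\|_{H^s}\le C\|u^m(\tau)\|_{H^s}^{\theta+1}\le CE^{\theta+1}$. Inserting this and using $\int_0^t(t-\tau)^{-\gamma}\,d\tau=\frac{t^{1-\gamma}}{1-\gamma}$ yields $\|u^{m+1}\|_{\X}\le\frac{E}{c_0}+\frac{C\,T_0^{1-\gamma}}{1-\gamma}\,E^{\theta+1}$. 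Since $c_0\ge4\sqrt{c_1}>2$, I would then fix $T_0$ once and for all, depending only on $\|u_0\|_{H^s}$, $\theta$, $s$, $s_1$, $s_2$ and $c_0$, so small that $\frac{C\,T_0^{1-\gamma}}{1-\gamma}E^{\theta}\le\tfrac12$; then $\|u^{m+1}\|_{\X}\le(\tfrac1{c_0}+\tfrac12)E<E$, so $u^{m+1}\in\X$. As $T_0$ is independent of $m$, the induction closes and $u^m\in\X$ for every $m\ge1$.

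The technical core — and the step I expect to be the main obstacle — is the Fourier-multiplier estimate in the second paragraph: one must extract a time-singularity exponent $\gamma$ strictly below $1$ from $\hat G(t,\xi)\,|\xi|\,(1+|\xi|^2)^{-s_2}$, so that the Duhamel term becomes a genuine smallness term for small $T_0$. It is exactly the hypothesis $2s_2\ge s_1$ that makes this possible, by balancing the one derivative lost to $\mathrm{div}$ against the $2s_2$ derivatives gained from $(I-\Delta)^{-s_2}$ and the high-frequency smoothing rate $|\xi|^{2(1-s_1)}$ of $\hat G$. The base case, the nonlinear bound via Lemma \ref{l2.9}, and the final smallness choice of $T_0$ are then routine.
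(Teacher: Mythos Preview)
Your argument is correct, but it follows a genuinely different route from the paper. The paper proceeds by an \emph{energy estimate}: multiplying (\ref{4.1}) by $\La^{2l}u^{m+1}$ and integrating gives
\[
\tfrac12\tfrac{d}{dt}\|\La^l u^{m+1}\|_{L^2}^2+\Bigl\|\tfrac{|\xi|^{l+1}}{(1+|\xi|^2)^{s_1/2}}\widehat{u^{m+1}}\Bigr\|_{L^2}^2
\le \int_{\R^n}\Bigl|\tfrac{\widehat{u^{m+1}}\,\widehat{(u^m)^{\theta+1}}\,|\xi|^{2l+1}}{(1+|\xi|^2)^{s_2}}\Bigr|\,d\xi,
\]
and then the hypothesis $2s_2\ge s_1$ enters in a single clean step: it guarantees $(1+|\xi|^2)^{-s_2}\le(1+|\xi|^2)^{-s_1/2}$, so after Cauchy--Schwarz and Young the right-hand side is bounded by $cE^{2\theta+2}+\tfrac12\bigl\|\tfrac{|\xi|^{l+1}}{(1+|\xi|^2)^{s_1/2}}\widehat{u^{m+1}}\bigr\|_{L^2}^2$, the second term is absorbed by the dissipation, and integrating in $t$ produces the factor $T_0$ directly. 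No Duhamel formula, no pointwise multiplier bound, and no case splitting on the size of $s_2$ are needed.

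Your Duhamel approach trades that structural simplicity for a more explicit mild-solution framework: the role of $2s_2\ge s_1$ becomes the analytic fact that the scalar multiplier $\hat G(t,\xi)\,|\xi|\,(1+|\xi|^2)^{-s_2}$ carries an integrable time singularity $(t-\tau)^{-\gamma}$ with $\gamma<1$, which you verify by the low/high-frequency dichotomy and, when $s_2<\tfrac12$, by the chain $s_1\le 2s_2<1$. This is correct and has its own advantages --- the same inequality feeds straight into the contraction estimate of Proposition~\ref{b}, and it makes transparent exactly how much smoothing the semigroup contributes --- but it requires the case analysis and the auxiliary bound $\sup_{r\ge0}e^{-cr^{a}t}r^{b}\le Ct^{-b/a}$, whereas the paper's energy argument avoids both and dovetails with the higher-order energy bounds used later in Section~5.
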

\begin{proof}
	For $m=0$, we have
	\begin{equation}\label{j3.1}
u^1_t-\frac{\Delta}{(I-\Delta)^{s_1}}u^1=\frac{-\text{div}\bigl( (u^{0})^{\theta+1}\bigl)}{(I-\Delta)^{s_2}}\equiv 0.
	\end{equation}
	Multiplying (\ref{j3.1}) by $\La^{2l}u^1$ and integrating with respect to $x,t$, it follows
	\begin{equation}\label{4.2}
	\frac{1}{2}\|\La^{l} u^1\|_{L^2}^2 +\int^t_0 \|\frac{|\xi|^{1+l}}{(1+|\xi|^2)^{s_1/2}}\widehat{u^1}^2\|_{L^2}^2d\tau
=\frac{1}{2}\|\La^l u_0\|_{L^2}^2.	\end{equation}
Letting $l=0$ and $l=s$ in (\ref{4.2}), we have
\begin{equation}
\dis \sup_{0\leq t\leq T_0}\|u^1\|^2_{H^s}\leq  c_1 (\|u^1\|_{L^2}^2+\|u^1\|_{\dot{H}^s}^2)\leq 4c_1\|u_0\|^2_{H^s}\leq E^2.
\end{equation}
Then for any $T_0>0$, we have $u^1\in \X$.
	
We assume there exists a $T_0$ sufficiently small such that $u^j(t,x)\in \X, \forall j\leq m$. By the induction method, to complete the proof of the lemma,  it is remain to prove that  $u^{m+1}\in \X$.

	Multiplying (\ref{4.1}) with $\La^{2l}u^{m+1}$ and integrating with respect to $x$, it follows
\begin{equation}\label{4.6}
	\begin{split}
	\frac{1}{2}\frac{d}{dt}\|\La^l u^{m+1}\|_{L^2}^2+\|\frac{|\xi|^{1+l}}{(1+|\xi|^2)^{s_1/2}}\widehat{u^{m+1}}\|_{L^2}^2&\leq \int_{\R^n }|\frac{\widehat{u^{m+1}}\widehat{(u^{m})^{\theta+1}}|\xi|^{2l+1}}{(1+|\xi|^2)^{s_2}}|
	d\xi.
	\end{split}
\end{equation}

	From $s>\frac{n}{2}$ and Lemma \ref{l2.9} and Sobolev inequality, we have 
\begin{equation}\label{4.9j}
\|\La^l (u^{m})^{\theta+1}\|_{L^2}\leq c\|u^m\|_{L^\infty}^\theta \|\La^l u^m\|_{L^2}\leq c\|u^m\|_{H^s}^{\theta}\|\La^l u^m\|_{L^2}\leq c\|u^m\|_{H^s}^{\theta+1}, \forall\, {0\leq l\leq s} 
\end{equation}
which follows \begin{eqnarray}\label{4.8}
\begin{split}
 \int_{\R^n }|\frac{\widehat{u^{m+1}}\widehat{(u^{m})^{\theta+1}}|\xi|^{2l+1}}{(1+|\xi|^2)^{s_2}}|d\xi
 &	\leq \|\La^l {(u^{m})^{\theta+1}}\|_{L^2}\|\frac{|\xi|^{l+1}}{(1+|\xi|^2)^{s_2}}{u^{m+1}}\|_{L^2}\\
 &	\leq\|u^m\|_{H^s}^{\theta+1} \|\frac{|\xi|^{l+1}}{(1+|\xi|^2)^{s_2}}{u^{m+1}}\|_{L^2}\\
 &\leq c\|u^m\|_{H^s}^{2\theta+2}+\frac{1}{2}\|\frac{|\xi|^{l+1}}{(1+|\xi|^2)^{s_2}}{u^{m+1}}\|_{L^2}^2
 \\&\leq cE^{2\theta+2}+\frac{1}{2}\|\frac{|\xi|^{l+1}}{(1+|\xi|^2)^{s_1/2}}{u^{m+1}}\|_{L^2}^2,
\end{split}
\end{eqnarray}
where in the last inequality we have used the fact \[\frac{1}{2}\|\frac{|\xi|^{l+1}}{(1+|\xi|^2)^{s_2}}{u^{m+1}}\|_{L^2}^2\leq \frac{1}{2}\|\frac{|\xi|^{l+1}}{(1+|\xi|^2)^{s_1/2}}{u^{m+1}}\|_{L^2}^2 \text{ for } 2s_2\geq s_1,\]
 and \[ u^m\in \X.\]
 (\ref{4.6}) and (\ref{4.8}) immediately yield 
\begin{equation}\label{4.9}
\begin{split}
\frac{d}{dt}\|\La^l u^{m+1}\|_{L^2}^2+\|\frac{|\xi|^{l+1}}{(1+|\xi|^2)^{s_1/2}}{u^{m+1}}\|_{L^2}^2 &\leq cE^{2\theta+2}.
\end{split}
\end{equation}
Integrating the inequality with respect to $t$, for all $0\leq t\leq  T_0$ it follows
\begin{equation}\label{4.10}
\begin{split}
\|\La^l u^{m+1}\|_{L^2}^2+\int^{t}_0\|\frac{|\xi|^{l+1}}{(1+|\xi|^2)^{s_1/2}}{u^{m+1}}\|_{L^2}^2 d\tau&\leq cT_0E^{2\theta+2}+\|u_0\|_{H^s}^2.
\end{split}
\end{equation}
Taking $l=0$ and $l=s$, from (\ref{1.2}) we have  for all $0\leq t\leq T_0$
\begin{equation}
\|u^{m+1}\|_{H^s}^2\leq c_1(\|\La^s u^{m+1}\|_{L^2}^2+\| u^{m+1}\|_{L^2}^2)\leq cT_0E^{2\theta+2}+\|u_0\|_{H^s}^2. 
\end{equation}
Choosing $T_0$ sufficiently small, we deduce
$\|u^{m+1}\|_{H^s}\leq E$
that is \[u_{m+1}\in \X, \forall \,0\leq t\leq T_0.\]
\end{proof}

\begin{proposition}\label{b}
	For $T_0$ mentioned in Propositon \ref{a}, $\{u^{m}\}$  is a Cauchy sequence in $\X$.
\end{proposition}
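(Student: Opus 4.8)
The plan is to show that the map $u^m\mapsto u^{m+1}$ defined by the linear problem \eqref{4.1}--\eqref{4.1j} is a contraction on $\X$ for $T_0$ small enough, which, combined with Proposition \ref{a} (so that all iterates stay in $\X$), forces $\{u^m\}$ to be Cauchy. Concretely, set $w^{m+1}=u^{m+1}-u^m$; subtracting the equation for $u^m$ (which has nonlinear source $-\mathrm{div}\,(u^{m-1})^{\theta+1}/(I-\Delta)^{s_2}$) from the equation for $u^{m+1}$, one gets
\begin{equation}\label{plan1}
\partial_t w^{m+1}-\frac{\Delta}{(I-\Delta)^{s_1}}w^{m+1}=\frac{-\mathrm{div}\,\bigl((u^m)^{\theta+1}-(u^{m-1})^{\theta+1}\bigr)}{(I-\Delta)^{s_2}},\qquad w^{m+1}(0,x)=0.
\end{equation}
The key algebraic point is the factorization $(u^m)^{\theta+1}-(u^{m-1})^{\theta+1}=w^m\cdot\sum_{j=0}^{\theta}(u^m)^j(u^{m-1})^{\theta-j}$, so the right-hand side is linear in $w^m$ with coefficients controlled by $\|u^m\|_{H^s}$ and $\|u^{m-1}\|_{H^s}$, both bounded by $E$.

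Next I would run the same energy estimate as in Proposition \ref{a}: multiply \eqref{plan1} by $\Lambda^{2l}w^{m+1}$, integrate in $x$, and bound the source term by
\begin{equation}\label{plan2}
\int_{\R^n}\Bigl|\frac{\widehat{w^{m+1}}\,\widehat{(u^m)^{\theta+1}-(u^{m-1})^{\theta+1}}\,|\xi|^{2l+1}}{(1+|\xi|^2)^{s_2}}\Bigr|d\xi\leq \bigl\|\Lambda^l\bigl((u^m)^{\theta+1}-(u^{m-1})^{\theta+1}\bigr)\bigr\|_{L^2}\Bigl\|\frac{|\xi|^{l+1}}{(1+|\xi|^2)^{s_2}}\widehat{w^{m+1}}\Bigr\|_{L^2}.
\end{equation}
For the first factor I would invoke Lemma \ref{l2.9} (and the Sobolev embedding $H^s\hookrightarrow L^\infty$, $s>n/2$) applied to each monomial $w^m (u^m)^j(u^{m-1})^{\theta-j}$, together with the Leibniz-type Lemma \ref{l2.1}, to get $\|\Lambda^l(\cdots)\|_{L^2}\leq c\,E^{\theta}\,\|w^m\|_{H^s}$ for $0\leq l\leq s$. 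Then, exactly as in \eqref{4.8}, use $2s_2\geq s_1$ to absorb $\tfrac12\|\,|\xi|^{l+1}(1+|\xi|^2)^{-s_1/2}\widehat{w^{m+1}}\|_{L^2}^2$ into the dissipation term on the left and bound the remaining piece by $cE^{2\theta}\|w^m\|_{H^s}^2$. This yields, after taking $l=0$ and $l=s$, integrating in $t$ over $[0,T_0]$, using $w^{m+1}(0)=0$ and the norm equivalence \eqref{1.2},
\begin{equation}\label{plan3}
\|w^{m+1}\|_{\X}^2\leq c\,T_0\,E^{2\theta}\,\|w^m\|_{\X}^2.
\end{equation}

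Finally, choosing $T_0$ so small that $c\,T_0\,E^{2\theta}\leq \tfrac14$ (compatibly with the smallness already required in Proposition \ref{a}), \eqref{plan3} gives $\|w^{m+1}\|_{\X}\leq \tfrac12\|w^m\|_{\X}$, hence $\|u^{m+1}-u^m\|_{\X}\leq 2^{-m}\|u^1-u^0\|_{\X}=2^{-m}\|u^1\|_{\X}$, and summing the geometric series shows $\{u^m\}$ is Cauchy in the complete metric space $\X$. The main obstacle is the nonlinear difference estimate: one must verify that Lemmas \ref{l2.1} and \ref{l2.9} indeed deliver a bound of the form $\|\Lambda^l\bigl((u^m)^{\theta+1}-(u^{m-1})^{\theta+1}\bigr)\|_{L^2}\leq cE^{\theta}\|w^m\|_{H^s}$ uniformly for $0\leq l\leq s$ — in particular that distributing the derivative $\Lambda^l$ over the product $w^m\prod(u)$ and placing $\Lambda^l$ on each factor in turn (with the others estimated in $L^\infty$ via $H^s$) produces only terms linear in $\|w^m\|_{H^s}$ with the remaining powers of $E$ — after which the absorption trick and the choice of $T_0$ are routine.
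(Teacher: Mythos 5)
Your proposal is correct and follows essentially the same route as the paper: form the equation for $w^{m+1}=u^{m+1}-u^m$, run the same $\Lambda^{2l}$ energy estimate with the $2s_2\geq s_1$ absorption, bound $\|\Lambda^l((u^m)^{\theta+1}-(u^{m-1})^{\theta+1})\|_{L^2}\leq cE^{\theta}\|w^m\|_{H^s}$, integrate over $[0,T_0]$ with zero initial data, and choose $T_0$ small to get a contraction. The only cosmetic difference is that you use the explicit factorization $a^{\theta+1}-b^{\theta+1}=(a-b)\sum_j a^j b^{\theta-j}$, while the paper reaches the same bound by iterating its Leibniz-type lemma on the powers (its (4.17)--(4.18)); both are equivalent.
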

\begin{proof}
We only need to prove that there exists a constant $0<k<1$ such that
\begin{equation}
\|u^{m+1}-u^{m}\|_{H^s}\leq k\|u^{m}-u^{m-1}\|_{H^s}.
\end{equation}

From (\ref{4.1}), we have
\begin{eqnarray}
\dis
& \dis \frac{\partial (u^{m+1}-u^{m})}{\partial
	t}-\frac{\Delta}{(I-\Delta)^{s_1}}(u^{m+1}-u^{m})=\frac{-\text{div} ((u^{m})^{\theta+1}-(u^{m-1})^{\theta+1})}{(I-\Delta)^{s_2}}, \label{4.13}\\
&u^{m+1}(0,x)-u^m(0,x)=0.\label{4.14}
\end{eqnarray}

	Multiplying (\ref{4.13}) by $\La^{2l}(u^{m+1}-u^{m})$ and integrating with respect to $x$, it follows
	\begin{equation}\label{4.15}
	\begin{split}
	&\frac{1}{2}\frac{d}{dt}\|\La^l (u^{m+1}-u^{m})\|_{L^2}^2+\|\frac{|\xi|^{1+l}}{(1+|\xi|^2)^{s_1/2}}\widehat{(u^{m+1}-u^{m})}\|_{L^2}^2\\&\ \ \ \leq \int_{\R^n }|\frac{\widehat{(u^{m+1}-u^{m})}(\widehat{(u^{m})^{\theta+1}}-\widehat{(u^{m-1})^{\theta+1}})|\xi|^{2l+1}}{(1+|\xi|^2)^{s_2}}|d\xi\\
	&\ \ \leq \|\frac{|\xi|^{1+l}}{(1+|\xi|^2)^{s_2}}\widehat{(u^{m+1}-u^{m})}\|_{L^2} \|\La^l((u^{m})^{\theta+1}-(u^{m-1})^{\theta+1})\|_{L^2}
	\\
	&\ \ \leq \frac{1}{2}\|\frac{|\xi|^{1+l}}{(1+|\xi|^2)^{s_2}}\widehat{(u^{m+1}-u^{m})}\|_{L^2}^2+\frac{1}{2} \|\La^l ((u^{m})^{\theta+1}-(u^{m-1})^{\theta+1})\|_{L^2}^2\\
	&\ \ \leq \frac{1}{2}\|\frac{|\xi|^{1+l}}{(1+|\xi|^2)^{s_1/2}}\widehat{(u^{m+1}-u^{m})}\|_{L^2}^2+\frac{1}{2} \|\La^l ((u^{m})^{\theta+1}-(u^{m-1})^{\theta+1})\|_{L^2}^2
	\end{split}
	\end{equation}
	which implies 
		\begin{equation}\label{4.16}
		\frac{d}{dt}\|\La^l (u^{m+1}-u^{m})\|_{L^2}^2+\|\frac{|\xi|^{1+l}}{(1+|\xi|^2)^{s_1/2}}\widehat{(u^{m+1}-u^{m})}\|_{L^2}^2 \leq \|\La^l ((u^{m})^{\theta+1}-(u^{m-1})^{\theta+1})\|_{L^2}^2. 
		\end{equation}
		 From $u^m\in \X$ for all $m\geq 1$
 and Lemma \ref{l2.9}, for all $s\geq l\geq 0$, we deduce
		\begin{equation}\label{4.17}
		\begin{split}
		&\| (u^{m})^{\theta+1}-(u^{m-1})^{\theta+1}\|_{\dot{H}^l}\\&
		\leq \|{(u^{m})^{\theta} (u^m-u^{m-1}  )}\|_{\dot{H}^l}+\|\Bigl((u^{m})^{\theta}-(u^{m-1})^{\theta}\Bigl)u_{m-1}\|_{\dot{H}^l}
		\\&
		\leq c\|u^{m}\|_{L^\infty}^{\theta-1}\|\La^l u^{m}\|_{L^2}\|u^{m}-u^{m-1}\|_{L^2}+\|u^{m}\|_{L^\infty}^\theta\|u^m-u^{m-1}\|_{\dot{H}^l}
		\\&\ \ +\|u^{m-1}\|_{L^\infty}\|(u^{m})^{\theta}-(u^{m-1})^{\theta}\|_{\dot{H}^l} +\|u^{m-1}\|_{\dot{H}^l}\|(u^m)^\theta-(u^{m-1})^{\theta}\|_{L^\infty}
		 \\&\leq c(\|u^m\|_{{H}^s},\|u^{m-1}\|_{{H}^s})\Bigl (\|u^m-u^{m-1}\|_{{H}^s}+\|(u^{m})^{\theta}-(u^{m-1})^{\theta}\|_{{H}^s}\Bigl )
		\end{split}
		\end{equation}
			where $c(\|u^m\|_{{H}^s},\|u^{m-1}\|_{{H}^s}) $ is a positive constant depending on $ \|u^m\|_{{H}^s},\|u^{m-1}\|_{{H}^s}$.
			Take $l=0$ and $l=s$, by using (\ref{4.18}) repeatedly，  we have
			\begin{equation}\label{4.18}
			\begin{split}
			&\|(u^{m})^{\theta+1}-(u^{m-1})^{\theta+1}\|_{\dot{H}^s}
			\\&\leq c\Bigl (\|u^m-u^{m-1}\|_{{H}^s}+\|(u^{m})^{\theta}-(u^{m-1})^{\theta}\|_{{H}^s}\Bigl ) \\&\leq c\Bigl (\|u^m-u^{m-1}\|_{{H}^s}+\|(u^{m})^{\theta-1}-(u^{m-1})^{\theta-1}\|_{{H}^s}\Bigl )\\
			\cdots
			\\&\leq c \|u^m-u^{m-1}\|_{{H}^s},
			\end{split}
			\end{equation}
			 from (\ref{4.16}) and (\ref{4.18}), it follows
				\begin{equation}\label{4.19}
				\frac{d}{dt}\bigl(\|\La^s (u^{m+1}-u^{m})\|_{L^2}^2+\| u^{m+1}-u^{m}\|_{L^2}^2\bigl) \leq c \|u^m-u^{m-1}\|_{{H}^s}^2.
				\end{equation}
		Integrating the inequality with respect to $t$, for all $0\leq t\leq  T_0$ it follows	
				\begin{equation}\label{4.20}
			\|\La^s (u^{m+1}-u^{m})\|_{L^2}^2+\| u^{m+1}-u^{m}\|_{L^2}^2 \leq c \int^{T_0}_0\|u^m-u^{m-1}\|_{{H}^s}^2d\tau,	
				\end{equation}
				which implies
				\begin{equation}\label{4.21}
				\begin{split}
		\sup_{0\leq t\leq T_0}\| (u^{m+1}-u^{m})\|_{H^s}^2 &\leq 	\sup_{0\leq t\leq T_0}	cT_0(\|\La^s (u^{m+1}-u^{m})\|_{L^2}^2+\| u^{m+1}-u^{m}\|_{L^2}^2) \\&\leq cc_1T_0\sup_{0\leq t\leq T_0}\| u^{m}-u^{m-1}\|_{H^s}^2.
			\end{split}
				\end{equation}
				Choose $T_0$ sufficiently small such that $0<k=cc_1T_0<1$, (\ref{4.15}) follows from (\ref{4.21}).
\end{proof}

From Proposition \ref{a} and Proposition \ref{b}, there exists a $u(t,x)\in \X$ which satisfies (\ref{1.1}) for $0\leq t\leq T_0$. Thus the local existence is proved. 

\section{Global existence}

In order to obtain the global existence, first we consider the bounded estimates of $\|u\|_{L^\infty}$ and $\|u\|_{H^s}$.
\begin{theorem}\label{t5.1}
	Suppose $u_0\in H^s(\R^n)$,  $s>\max\{s_2,n/2\}, n>2, s_2>s_1$,  then we can get
	\begin{equation}\label{5.1j}
	\|u\|_{L^\infty}\leq c\|u\|_{H^s}\leq C
	\end{equation}
	where $c,C$ depend on $\|u_0\|_{H^s}$.
\end{theorem}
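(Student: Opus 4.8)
The plan is to establish the a priori bound \eqref{5.1j} for the local solution constructed in Section~4 by combining the integral representation \eqref{3.2}, the Green's function estimates from Theorem~\ref{T3.3}, and a continuation/bootstrap argument. Since $u \in L^\infty(0,\infty;H^s)$ is assumed, the real content is to show that the $H^s$-norm cannot blow up, i.e. to close an energy estimate whose nonlinear terms are controlled by the diffusion and by an already-established $L^1$ or $L^\infty$ decay of $u$. First I would derive the basic energy identity: multiply \eqref{1.1j} by $\Lambda^{2l}u$ (with $l=0$ and $l=s$), integrate over $\R^n$, and use Plancherel to obtain
\begin{equation}
\frac12\frac{d}{dt}\|\Lambda^l u\|_{L^2}^2 + \Bigl\|\frac{|\xi|^{1+l}}{(1+|\xi|^2)^{s_1/2}}\widehat u\Bigr\|_{L^2}^2 \le \int_{\R^n}\Bigl|\frac{\widehat u\,\widehat{u^{\theta+1}}\,|\xi|^{2l+1}}{(1+|\xi|^2)^{s_2}}\Bigr|d\xi,
\end{equation}
exactly as in \eqref{4.6}. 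Using $2s_2 \ge s_1$ (which holds since $s_2 > s_1 \ge 0$) the factor $(1+|\xi|^2)^{-s_2}$ on the right is dominated by $(1+|\xi|^2)^{-s_1/2}$, so Cauchy--Schwarz and Young absorb half of the dissipation term, leaving $\|\Lambda^l(u^{\theta+1})\|_{L^2}^2$ on the right, which by Lemma~\ref{l2.9} and Sobolev embedding ($s>n/2$) is bounded by $C\|u\|_{L^\infty}^{2\theta}\|\Lambda^l u\|_{L^2}^2 \le C\|u\|_{H^s}^{2\theta}\|\Lambda^l u\|_{L^2}^2$.

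The key difficulty — and the reason this is stated as a separate theorem rather than a one-line Gronwall argument — is that the crude estimate above only gives $\frac{d}{dt}\|u\|_{H^s}^2 \le C\|u\|_{H^s}^{2\theta+2}$, which blows up in finite time for large data. To beat this, the plan is to first obtain a decaying $L^1 \cap L^\infty$ bound on $u$ itself using the integral representation \eqref{3.2} and Theorem~\ref{T3.3}: since $\|G(t)\|_{L^1}\le c$ and $\|\nabla G(t)\|_{L^1}\le ct^{-1/2}$ for $t\ge 1$ (with analogous short-time bounds), a fixed-point/Duhamel iteration in $L^1\cap L^\infty$ (using $\|\nabla G(t-\tau)\ast (I-\Delta)^{-s_2}f(u)\|_{L^p} \le \|\nabla G(t-\tau)\|_{L^1}\|(I-\Delta)^{-s_2}u^{\theta+1}\|_{L^p}$ and the smoothing of $(I-\Delta)^{-s_2}$) should yield $\|u(t)\|_{L^\infty} \le C(1+t)^{-\gamma}$ for a suitable $\gamma>0$ determined by $n,s_1,s_2$, possibly after first establishing the $\theta=1$ case and inducting on $\theta\le\theta_0$ using \eqref{1.9}. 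Feeding this decay back into the energy inequality gives $\frac{d}{dt}\|u\|_{H^s}^2 \le C(1+t)^{-2\theta\gamma}\|u\|_{H^s}^2$, and if $2\theta\gamma$ is arranged to make $\int_0^\infty (1+t)^{-2\theta\gamma}\,dt$ finite — or more carefully, if one runs the argument so that the time integral of the nonlinear coefficient is small — Gronwall's inequality yields $\|u(t)\|_{H^s}^2 \le \|u_0\|_{H^s}^2 \exp\bigl(C\int_0^\infty(1+t)^{-2\theta\gamma}dt\bigr) \le C$, and then $\|u\|_{L^\infty}\le c\|u\|_{H^s}\le C$ by Sobolev.

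The main obstacle I anticipate is precisely closing the $L^1$--$L^\infty$ estimate for \emph{large} data: unlike the small-data case, one cannot simply absorb the Duhamel nonlinear term into the linear part by smallness, so the argument must instead exploit the time-decay of $G$ together with an a priori $L^1$ bound (propagated by $\|G(t)\|_{L^1}\le c$ and the conservation-law structure) and a careful balancing of the exponents in \eqref{1.9} so that the nonlinear self-interaction is integrable in time. Concretely, I expect the proof to split into: (i) a short-time bound $\|u(t)\|_{L^1\cap L^\infty}\le C$ on $[0,1]$ from the local theory; (ii) propagation of $\|u(t)\|_{L^1}\le C$ for all $t$; (iii) a bootstrap on $[1,\infty)$ giving $\|u(t)\|_{L^\infty}$ decay via \eqref{3.2}, Theorem~\ref{T3.3}, and Lemma~\ref{l2.9}; and (iv) the energy--Gronwall step above. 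Steps (ii)--(iii) are where the $n>2$ and $s_2>s_1$ hypotheses, and the precise form of $\theta_0$, will be used in an essential way, and I would expect the bulk of the technical work there to consist of splitting $\int_0^t$ into $\int_0^{t/2}$ and $\int_{t/2}^t$ and tracking the resulting powers of $t$.
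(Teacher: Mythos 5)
There is a genuine gap: your plan never uses the one structural fact that makes the large-data case work, namely the exact cancellation $\int_{\R^n}\mathrm{div}(u^{\theta+1})\,u\,dx=0$. The paper rewrites (\ref{1.1}) as $(I-\Delta)^{s_2}u_t-(I-\Delta)^{s_2-s_1}\Delta u=\mathrm{div}(u^{\theta+1})$ and tests with $u$ itself; the nonlinearity drops out identically, giving the unconditional identity $\|u\|_{H^{s_2}}^2+\int_0^t\int_{\R^n}(1+|\xi|^2)^{s_2-s_1}|\xi|^2\hat u^2\,d\xi\,d\tau=\|u_0\|_{H^{s_2}}^2$, i.e.\ both a uniform $H^{s_2}$ bound and, crucially, a space--time dissipation bound $\int_0^t\|\La^{1+s_2-s_1}u\|_{L^2}^2\,d\tau\le\|u_0\|_{H^{s_2}}^2$, with no smallness, no Gronwall, and no time decay. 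The higher regularity is then obtained by a ladder argument: with $l_0=(s_2-s_1)/2$ one proves inductively $\|\La^{kl_0}u\|_{L^2}\le c\|u_0\|_{H^s}$ together with $\int_0^t\|\La^{(k+2)l_0+1}u\|_{L^2}^2\,d\tau\le c\|u_0\|_{H^s}^2$, estimating the nonlinear term via H\"older with exponents $\tfrac{2n}{n+2},\tfrac{2n}{n-2}$, Lemma~\ref{l2.9}, Sobolev embedding and the $H^{s_2}$ bound (this is exactly where $n>2$, $s_2>s_1$ and $\theta\le\theta_0$ enter), so that each step is paid for by the dissipation integral gained at the previous step; after finitely many steps one reaches $\dot H^s$ and concludes by Sobolev. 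Your crude bound $\frac{d}{dt}\|u\|_{H^s}^2\le C\|u\|_{H^s}^{2\theta+2}$ is indeed useless, but the remedy is this cancellation-plus-dissipation bootstrap, not decay of $u$.

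The substitute you propose --- first proving $L^1\cap L^\infty$ decay via the Duhamel formula (\ref{3.2}) and Theorem~\ref{T3.3}, then feeding it into a Gronwall argument --- cannot be closed in the stated order, and you essentially concede this: for large data the Duhamel nonlinear term cannot be absorbed, and there is no maximum principle to propagate $\|u\|_{L^1}$ or $\|u\|_{L^\infty}$ directly. In the paper the logical order is the reverse of yours: the uniform $H^s$ bound (Theorem~\ref{t5.1}) comes first and needs no decay at all; the $L^2$ decay (Theorem~\ref{t6.1}) and only then the $L^1$ bound (Theorem~\ref{l6.2}, whose proof uses $\|u^{\theta+1}\|_{L^1}\le\|u\|_{L^\infty}^{\theta-1}\|u\|_{L^2}^2\le c(1+t)^{-n/2}$ from Theorem~\ref{t6.1}) are derived afterwards. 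So steps (ii)--(iii) of your outline presuppose estimates that in this problem are consequences, not ingredients, of the theorem you are trying to prove.
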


\begin{proof}
(\ref{1.1}) is equivalent to 
\begin{equation}\label{5.1}
(I-\Delta)^{s_2}u_t-(I-\Delta)^{s_2-s_1}\Delta u=\text{div} (u^{\theta+1}).
\end{equation}

{\bf Step 1. The estimation of $\|u\|_{H^{s_2}}$.}

Multiplying (\ref{5.1}) by $u$ and integrating with respect to $x$, it follows 
\begin{equation}\label{5.2}
\int_{\R^n}(I-\Delta)^{s_2}u_t udx+\int_{\R^n}(I-\Delta)^{s_2-s_1}(-\Delta) u udx=\int_{\R^n}\text{div} (u^{\theta+1})udx.
\end{equation}
Noting the fact 
\begin{equation}\label{5.3}
\int_{\R^n}(I-\Delta)^{s_2}u_t udx=\int_{\R^n}(1+|\xi|^2)^{s_2}\hat{u}_t \hat{u}d\xi=\frac{1}{2}\frac{d}{dt}\|(1+|\xi|^2)^{s_2/2}\hat{u}\|_{L^2}^2,
\end{equation}
\begin{equation}\label{5.4}
\int_{\R^n}\text{div} (u^{\theta+1})udx=0,
\end{equation}
\begin{equation}\label{5.6}
\int_{\R^n}(I-\Delta)^{s_2-s_1}(-\Delta) u udx=\int_{\R^n}(1+|\xi|^2)^{s_2-s_1}|\xi|^2\hat{u}^2d\xi,
\end{equation}
 it follows
 \begin{equation}\label{5.7}
 \frac{1}{2}\frac{d}{dt}\|u\|_{{H}^{s_2}}^2+2\int_{\R^n}(1+|\xi|^2)^{s_2-s_1}|\xi|^2\hat{u}^2d\xi=0.
 \end{equation}
 Integrating (\ref{5.7}) with respect to $t$, it follows
 \begin{equation}\label{5.8}
 \|u\|_{{H}^{s_2}}^2+\int^t_0\int_{\R^n}(1+|\xi|^2)^{s_2-s_1}|\xi|^2\hat{u}^2d\xi d\tau=\|u_0\|_{H^{s_2}}^2,
 \end{equation}
 which implies
  \begin{equation}\label{5.9}
  \|u\|_{{H}^{s_2}}^2\leq \|u_0\|_{H^{s_2}}^2,\int^t_0\|\La^{1+s_2-s_1}u\|_{L^2}^2d\tau\leq \|u_0\|_{H^{s_2}}^2.
  \end{equation}
  
  {\bf Step 2. The estimation of $\|u\|_{L^\infty}$ and $\|u\|_{H^s}$.}
  
   
  Assume that $k_0\in \N$ satisfies that
  \begin{equation}
  \label{5.10j}
 k_0l_0+s_2<s, (k_0+1)l_0+s_2\geq s,
  \end{equation}
  where $l_0=(s_2-s_1)/2$. We claim that
  \begin{equation}\label{5.11j}
  	\|\La^{kl_0}u\|_{L^2}\leq c\|u_0\|_{H^s}, \int^t_0\|\La^{(k+2)l_0+1}u\|_{L^2}^2d\tau\leq c\|u_0\|_{H^s}^2\text{ for all } 0\leq k\leq k_0.
  \end{equation}
  
   We will prove (\ref{5.11j}) by induction.
   For $k=0$, (\ref{5.11j}) follows from Step 1.
   Assume $k=j\leq k_0-1$, (\ref{5.11j}) is satisfied,
   a.e., \begin{equation}\label{5.12}
   \|\La^{jl_0}u\|_{L^2}\leq c\|u_0\|_{H^s}, \int^t_0\|\La^{(j+2)l_0+1}u\|_{L^2}^2d\tau\leq c\|u_0\|_{H^s}^2.
   \end{equation} 
  We will consider (\ref{5.11j})  for $k=j+1$. Multiplying (\ref{5.1}) by $\La^{l}u$ and integrating with respect to $x$, similar to (\ref{5.8}),  it follows 
 \begin{equation}\label{5.10}
 \begin{split}
 \frac{1}{2}\frac{d}{dt}\|\La^l u\|_{{H}^{s_2}}^2+\int_{\R^n}(1+|\xi|^2)^{s_2-s_1}|\xi|^{2+2l}\hat{u}^2d\xi= \int_{\R^n}\text{div}(u^{\theta+1})\La^{2l}udx.
 \end{split}
 \end{equation}
 From Sobolev inequality, Lemma \ref{l2.9}, (\ref{5.12}) and $\theta\leq \theta_0$, we have
  \begin{equation}\label{5.14}
  \|\La^{(j+2)l_0}u\|_{L^{\frac{2n}{n-2}}}\leq \|\La^{(j+2)l_0+1}u\|_{L^2},
  \end{equation}
  \begin{equation}\label{5.15}
  \begin{split}
  \|\La^{jl_0+1} (u^{\theta+1})\|_{L^{\frac{2n}{n+2}}}&\leq c\|\La^{jl_0+1}u\|_{L^{p_1}} \bigl (\|u^\theta\|_{L^{p_2}}+\|u^{\theta-1}\|_{L^{p_2}}\|u\|_{L^{p_2}}
  +\cdots+\|u\|^\theta_{L^{p_2}})\\
  &\leq c \|\La^{1+(j+2)l_0}u\|_{L^{2}}\|u\|_{H^{s_2}}^\theta\leq 
  c\|\La^{1+(j+2)l_0}u\|_{L^2},
  \end{split} \end{equation}
  where $p_1=\frac{2n}{n-4l_0}, p_2=\frac{n}{1+2l_0}$ and the last inequality follows from $p_2\theta\leq \frac{2n}{n-2s_2} $ for $n>2s_2$ and $\|u^\theta\|
  _{L^{p_2}}\leq \|u\|_{H^{s_2}}^\theta $.
   Taking $l=(j+1)l_0$, integrating (\ref{5.10}) with respect to $t$, 
  it follows from (\ref{5.12}), (\ref{5.14}) and (\ref{5.15}) that
 \begin{equation}\label{5.16}
   \begin{split}
   &\frac{1}{2}\|\La^{(j+1)l_0} u\|_{{H}^{s_2}}^2+\int^t_0\int_{\R^n}(1+|\xi|^2)^{s_2-s_1}|\xi|^{2+2(j+1)l_0}\hat{u}^2d\xi d\tau\\&\ \ = \int^t_0\int_{\R^n}\text{div}(u^{\theta+1})\La^{2(j+1)l_0}udxd\tau+c\|u_0\|_{H^{s}}^2
  \\&\ \ \leq \int^t_0\|\La^{jl_0+1}(u^{\theta+1})\|_{L^{\frac{2n}{n+2}}}\|\La^{(j+2)l_0}u\|_{L^{\frac{2n}{n-2}}}d\tau+c\|u_0\|_{H^{s}}^2
  \\& \ \ \leq \int^t_0\|\La^{1+(j+2)l_0}u\|_{L^2}^2d\tau+c\|u_0\|_{H^{s}}^2\leq  c\|u_0\|_{H^{s}}^2
  \end{split}
   \end{equation}
   which implies that (\ref{5.11j}) follows for $k=j+1$.
Take $l=s-s_2$ and from (\ref{5.10j}),
similar to (\ref{5.14}) and (\ref{5.15}), we have
 \begin{equation}\label{5.17}
 \|\La^{(k_0+2)l_0}u\|_{L^{\frac{2n}{n-2}}}\leq \|\La^{(k_0+2)l_0+1}u\|_{L^2}, 
 \end{equation}
 \begin{equation}\label{5.18}
 \begin{split}
 &\|\La^{2s-2s_2-(k_0+2)l_0+1} (u^{\theta+1})\|_{L^{\frac{2n}{n+2}}}\\ &\leq c\|\La^{2s-2s_2-(k_0+2)l_0+1}u\|_{L^{p_1}} \bigl (\|u^\theta\|_{L^{p_2}}+\|u^{\theta-1}\|_{L^{p_2}}\|u\|_{L^{p_2}}
 +\cdots+\|u\|^\theta_{L^{p_2}})\\
 &\leq  c\|\La^{1+(k_0+2)l_0}u\|_{L^{2}}\|u\|_{H^{s_2}}^\theta\leq 
 c\|\La^{1+(k_0+2)l_0}u\|_{L^2},
 \end{split} \end{equation}
 where $\bar p_1= \frac{2n}{n-2\bigl (2（(k_0+2)l_0-(2s-2s_2)\bigl )},\text{} \bar p_2=\frac{n}{1+\bigl(2(k_0+2)l_0-(2s-2s_2)\bigl )}$ and the last inequality have used the fact
 \begin{equation}
 \bar{p}_1\geq p_1, \, \bar{p}_2\theta \leq p_2\theta.
 \end{equation}
For $l=s-s_2$,  integrating (\ref{5.10}) with respect to $t$, 
 it follows from (\ref{5.12}), (\ref{5.17}) and (\ref{5.18}) that
 \begin{equation}\label{5.16}
 \begin{split}
 &\frac{1}{2}\|\La^{s-s_2} u\|_{{H}^{s_2}}^2+\int^t_0\int_{\R^n}(1+|\xi|^2)^{s_2-s_1}|\xi|^{2+2s-2s_2}\hat{u}^2d\xi d\tau\\&\ \ = \int^t_0\int_{\R^n}\text{div}(u^{\theta+1})\La^{2s-2s_2}udxd\tau+c\|u_0\|_{H^{s}}^2
 \\&\ \ \leq \int^t_0\|\La^{2s-2s_2-(k_0+2)l_0+1}(u^{\theta+1})\|_{L^{\frac{2n}{n+2}}}\|\La^{(k_0+2)l_0}u\|_{L^{\frac{2n}{n-2}}}d\tau+c\|u_0\|_{H^{s}}^2
 \\& \ \ \leq \int^t_0\|\La^{1+(k_0+2)l_0}u\|_{L^2}^2d\tau\leq  c\|u_0\|_{H^{s}}^2
 \end{split}
 \end{equation}
 which implies that
\begin{equation}\label{5.22}
\|u\|_{\dot{H}^{s}}\leq c\|u_0\|_{H^{s}}.
\end{equation}
It  follows from $s>n/2$, (\ref{5.9}) and (\ref{5.22}) that
\begin{equation}\label{5.23}
\|u\|_{L^\infty}\leq c\|u\|_{{H}^{s}}\leq c\|u_0\|_{H^{s}}.
\end{equation}
\end{proof}
{\bf Proof of Theorem 1.1}
From Theorem \ref{t5.1} and the local existence,  we  derive a global solution for (\ref{1.1}) such that
\[u\in L^\infty(0,\infty;H^s(\R^n)).\]
Theorem 1.1 is complete.

\section{Decay  estimates in $\dot{H}^s(\R^n)$}
Let \begin{eqnarray}\label{j3.3j}
\chi_0(\eta)=\begin{cases}
1,&|\eta|\leq 1,\\ 
0,&|\eta|\geq 2 
\end{cases}
\end{eqnarray}
be a smooth cut-off function. Then we define the time-frequency cut-off operator $\chi(t,D)$ with the symbol $\chi(t,\xi)=\chi_0(\frac{1+t}{\mu}|\xi|^2)$, where $\mu>n+2s$. Then
\begin{eqnarray}\label{j3.3}
\chi(t,\xi)=\begin{cases}
	1,&|\xi|\leq \eta(t),\\ 
	0,&|\xi|\geq 2\eta(t)， 
\end{cases}
\end{eqnarray}
where $\eta(t)=\sqrt{\frac{\mu}{(1+t)}}$. Then for a function $g(x,t)$ we can decompose it into two parts : the low frequency part $g_L$ and the high frequency part $g_H$ where 
 \[g_L(x,t)=\chi(t,D)g(x,t),\,g_H(x,t)=(1-\chi(t,D))g(x,t).\]
For the low frequency part $u_L$ of the solution $u$ for (\ref{1.1}), we have the following decay estimates.
\begin{lemma}\label{l6.1}
	Suppose $u_0\in L^1(\R^n)\bigcap H^s(\R^n)$ and $s>\max\{s_2,n/2\}, s_2>s_1, 0\leq s_1<1, 1\leq \theta\leq \theta_0, \theta\in \N$, then we get for all $l\geq 0$
	\begin{equation}
	\|\La^{l}u_L\|_{L^2}\leq c(1+t)^{-\frac{n}{4}-\frac{l}{2}}
	\end{equation}
	where $c>0$ is a positive constant depending on $\|u_0\|_{L^1},\|u_0\|_{H^s}$.
\end{lemma}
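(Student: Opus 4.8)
\textbf{Proof proposal for Lemma \ref{l6.1}.}

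The plan is to run a bootstrap (continuity/induction-in-time) argument on the quantity
\[
M(t)=\sup_{0\le \tau\le t}\ \sup_{0\le l\le L}\ (1+\tau)^{\frac{n}{4}+\frac{l}{2}}\,\|\Lambda^{l}u_L(\tau)\|_{L^2},
\]
for a fixed large $L\ge s$, using the integral representation (\ref{3.2}) restricted to low frequencies. First I would apply the frequency cut-off $\chi(t,D)$ to (\ref{3.2}) to write $u_L = (G*u_0)_L - \int_0^t \big(G(t-\tau,\cdot)*\tfrac{\operatorname{div}f(u)}{(I-\Delta)^{s_2}}\big)_L\,d\tau$, and then treat the two terms separately. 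For the linear term, on the support of $\chi(t,\xi)$ one has $|\xi|^{2}/(1+|\xi|^2)^{s_1}\sim |\xi|^{2}$, so $|\widehat{G}(t,\xi)|\le e^{-c|\xi|^2 t}$ there; combining the Hausdorff--Young-type bound $\|\Lambda^l (G*u_0)_L\|_{L^2}\le \big(\int_{|\xi|\le 2\eta(t)}|\xi|^{2l}e^{-c|\xi|^2 t}d\xi\big)^{1/2}\|u_0\|_{L^1}$ with the change of variables $\xi=\eta/\sqrt t$ gives the desired $(1+t)^{-n/4-l/2}$ bound. (For small $t$ one uses instead $\|u_0\|_{H^s}$ and the boundedness of $\chi(t,D)$.)

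The substance is the Duhamel term. Here I would use $\|\Lambda^l(G(t-\tau)*h)_L\|_{L^2}\le \big(\int_{|\xi|\le 2\eta(t)}|\xi|^{2l+2}e^{-c|\xi|^2(t-\tau)}d\xi\big)^{1/2}\|(I-\Delta)^{-s_2}f(u(\tau))\|_{L^1}$, where the extra $|\xi|^2$ comes from the divergence and is what produces the gain. Since $\|(I-\Delta)^{-s_2}f(u)\|_{L^1}\le \|f(u)\|_{L^1}\le \|u^{\theta+1}\|_{L^1}=\|u\|_{L^{\theta+1}}^{\theta+1}$, the nonlinearity must be controlled by an $L^{\theta+1}$ norm. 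I would split $u=u_L+u_H$ and interpolate: the high-frequency factor is bounded using Theorem \ref{t5.1} (which gives $\|u\|_{H^s}\le C$, hence $\|u_H\|_{L^{\theta+1}}$ is controlled with decay coming from $\eta(t)^{-\text{(power)}}$ times the dissipation integral $\int_0^t\|\Lambda^{1+s_2-s_1}u\|_{L^2}^2$ in (\ref{5.9})), while the low-frequency factor $\|u_L\|_{L^{\theta+1}}$ is estimated by Gagliardo--Nirenberg (Lemma \ref{l2.4}) in terms of $\|u_L\|_{L^2}$ and $\|\Lambda^{L}u_L\|_{L^2}$, i.e.\ in terms of $M(t)$. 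This yields an inequality of the schematic form
\[
(1+t)^{\frac{n}{4}+\frac{l}{2}}\|\Lambda^l u_L(t)\|_{L^2}\ \le\ C\big(\|u_0\|_{L^1}+\|u_0\|_{H^s}\big)\ +\ C\,(1+t)^{-\kappa}\,\big(1+M(t)^{\theta+1}\big)
\]
for some $\kappa>0$ (using that $\theta\ge 1$ makes the time integral of the nonlinear contribution converge, after the change of variables and splitting $\int_0^t=\int_0^{t/2}+\int_{t/2}^t$). Taking the supremum over $l\le L$ and $\tau\le t$ gives $M(t)\le C_0 + C\,M(t)^{\theta+1}$ with $C_0$ depending only on the data, whence $M(t)$ stays bounded for all time by a standard continuity argument (the map is defined on the connected set where $M$ is finite, $M(0)$ is small, and the only way to leave the trap is through a region the inequality forbids). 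Since the statement asks only for $0\le l$ arbitrary but the scaling estimate above is uniform in $l$ once $L$ is chosen $\ge l$, this finishes the proof.

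The main obstacle I expect is bookkeeping the exponents in the interpolation so that the nonlinear time-integral genuinely decays, i.e.\ verifying that the powers of $\eta(t)=\sqrt{\mu/(1+t)}$ produced by Gagliardo--Nirenberg on $u_L$ and by the high-frequency truncation on $u_H$, together with the $\tfrac12$-power gain from $\operatorname{div}$ and the $t^{-n/4}$ from the heat-type kernel, combine to an integrable excess power of $(1+\tau)$ on $[t/2,t]$ and a net $(1+t)^{-n/4-l/2}$ on $[0,t/2]$; this is where the hypotheses $n>2$, $0\le s_1<1$ and $\theta\le\theta_0$ (ensuring the Sobolev embeddings in Lemma \ref{l2.9} and (\ref{5.15})--(\ref{5.18}) are admissible) are actually used. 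The rest — the linear estimate, the abstract continuity argument, and the reduction to $L^1$ of the nonlinearity — is routine given Theorem \ref{T3.3} and Theorem \ref{t5.1}.
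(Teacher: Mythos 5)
Your overall frame (Duhamel formula (\ref{3.2}) with the cut-off $\chi(t,D)$, linear part via Young's inequality and the Gaussian-type bound on the support of $\chi(t,\xi)$) matches the paper, but the core of your argument --- the bootstrap on $M(t)$ --- has a genuine gap and is not how the paper proceeds. First, the closure $M(t)\le C_0+C\,M(t)^{\theta+1}$ only confines $M(t)$ when $C\,C_0^{\theta}$ is small; here $C_0$ is of the size of $\|u_0\|_{L^1}+\|u_0\|_{H^s}$, which is \emph{large} by hypothesis, and your claim that ``$M(0)$ is small'' is not available: at $\tau=0$ the weight equals $1$, so $M(0)$ is comparable to a norm of $u_0$, not small. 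Hence the continuity argument does not trap $M$ --- this is precisely the small-data obstruction the paper is designed to avoid. Second, the decay you want to extract from the high-frequency factor $\|u_H\|_{L^{\theta+1}}$ is not justified at this stage: (\ref{5.9}) gives only a time-integrated dissipation bound, not pointwise-in-time decay, and in the paper any decay of the high-frequency part is established only \emph{after} Lemma \ref{l6.1} (in the proof of Theorem 1.2, which uses this very lemma), so invoking it here risks circularity.

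The observation you miss is that no decay of the nonlinearity is needed at all. Theorem \ref{t5.1} already yields the uniform bound $\|u^{\theta+1}\|_{L^1}\le \|u\|_{L^\infty}^{\theta-1}\|u\|_{L^2}^{2}\le c$, and the paper feeds this into the Duhamel term through Plancherel, using $\|\widehat{u^{\theta+1}}\|_{L^\infty}\le \|u^{\theta+1}\|_{L^1}$ together with the factor $\int_{|\xi|\le 2\eta(t)}|\xi|^{2}d\xi\le c(1+t)^{-\frac{n+2}{2}}$, which comes from the divergence and the shrinking support of $\chi(t,\xi)$ and supplies all the time decay; integrating in $\tau$ then gives $\|u_L\|_{L^2}\le c(1+t)^{-n/4}$ (this is (\ref{6.4})--(\ref{6.7})). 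Finally, the general case $l\ge 0$ needs no separate Duhamel estimate and no auxiliary index $L$: on the support of $\chi(t,\xi)$ one has $|\xi|^{2l}\le c(1+t)^{-l}$, so $\|\La^l u_L\|_{L^2}^2\le c(1+t)^{-l}\|u_L\|_{L^2}^2$, which is exactly (\ref{6.8}). In short, your treatment of the linear term is fine, but the interpolation/bootstrap machinery is both unnecessary and, for large data, the step at which the proposed proof breaks down.
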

\begin{proof}
According to (\ref{3.2}), it follows
\begin{equation}\label{6.3}
u_L=\chi(t,D)G*u_0+\int^t_0 \chi(t,D)G(t-\tau,\cdot)*\frac{\text{div} (u^{\theta+1})}{(I-\Delta)^{s_2}}d\tau.
\end{equation}
By Minkowski's inequality, we have
\begin{equation}\label{6.4}
\begin{split}
\|u_L\|_{L^2}&\leq \|\chi(t,D)G*u_0\|_{L^2}+\Bigl (\int^t_0\| \chi(t,D)G(t-\tau,\cdot)*\frac{\text{div} (u^{\theta+1})}{(I-\Delta)^{s_2}}\|_{L^2}^2d\tau\Bigl )^{1/2}\\
&=J_1+J_2.
\end{split}
\end{equation}
For $J_1$, by Theorem \ref{T3.3} and Young's inequality, we obtain
\begin{equation}
\begin{split}
J_1\leq \|\chi(t,D)G\|_{L^2}\|u_0\|_{L^1}\leq c(1+t)^{-n/4}\|u_0\|_{L^1}.
\end{split}
\end{equation}
By Lemma \ref{l2.9}  and (\ref{5.1j}), it follows
\begin{equation}
\|u^{\theta+1}\|_{L^1}\leq \|u\|_{L^2}^2\|u\|_{L^\infty}^{\theta-1}\leq c,
\end{equation}
where $c$ is a positive constant depending on $\|u_0\|_{H^s}$.
Thus for $J_2$, by Plancherel's Theorem, we know that
\begin{equation}\label{6.7}
\begin{split}
|J_2|^2&=\int^t_0\int_{\R^n}|\chi(t,\xi)|^2e^{-\frac{2|\xi|^2(t-\tau)}{(1+|\xi|^2)^{s_1}}}\frac{|\widehat{u^{\theta+1}}|^2|\xi|^2}{(1+|\xi|^2)^{s_2}}d\xi d\tau\\
&\leq c\int^t_0 \|\widehat{u^{\theta+1}}\|^2_{L^\infty}\int_{\R^n}|\xi|^2|
\chi(t,\xi)|^2d\xi d\tau
\\&\leq c\int^t_0 \|{u^{\theta+1}}\|^2_{L^1}\int_{|\xi|\leq 2\eta(t)}|\xi|^2d\xi d\tau\\
&\leq c\int^t_0 (1+\tau)^{-\frac{n+2}{2}} d\tau\leq c(1+t)^{-\frac{n}{2}}
\end{split}
\end{equation}
where $c$ depends on $\|u_0\|_{H^s}, \|u_0\|_{L^1}$.
Then we have
\begin{equation}
	\| u_L\|^2\leq c (1+t)^{-\frac{n}{2}}.
\end{equation}
Thus for all $l\geq 0$ \begin{equation}\label{6.8}
\begin{split}
\|\La^l u_L\|^2&=\int_{\{|\xi|\leq 2\eta(t)\}}|\chi(t,\xi)|^2|\xi|^{2l}|\hat{u}_L|^2d\xi 
\leq c(1+t)^{-l}\|u_L\|_{L^2}^2
\leq c (1+t)^{-\frac{n}{2}-l}.
\end{split}
\end{equation}
\end{proof}
\begin{theorem}\label{t6.1} Under the assumptions of Lemma \ref{l6.1},  it follows
	\begin{equation}\label{6.11j}
	\|u\|_{H^{s_2}}\leq c(1+t)^{-\frac{n}{4}},
	\end{equation}
	\begin{equation}\label{6.12j}
	\int^t_{t_0}(1+\tau)^{\mu/2}\int_{\R^n}(1+|\xi|^2)^{s_2-s_1}|\xi|^2\hat{u}^2d\xi d\tau \leq c(1+t)^{-\frac{n-\mu}{2}}
	\end{equation}
where $c$ depends on $\|u_0\|_{H^s}, \|u_0\|_{L^1}$.	
\end{theorem}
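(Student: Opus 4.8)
The plan is to upgrade the uniform-in-time energy identity of Theorem~\ref{t5.1} into a decay statement by a weighted (time-multiplier) energy estimate, using the already-established low-frequency decay of Lemma~\ref{l6.1} to absorb the part of the solution that the dissipation does not control. Recall from the proof of Theorem~\ref{t5.1} that multiplying \eqref{5.1} by $u$ and integrating gives the identity
\begin{equation}\label{plan1}
\frac{1}{2}\frac{d}{dt}\|u\|_{H^{s_2}}^2+\int_{\R^n}(1+|\xi|^2)^{s_2-s_1}|\xi|^2\hat u^2\,d\xi=0,
\end{equation}
since the nonlinear term $\int_{\R^n}\mathrm{div}(u^{\theta+1})u\,dx$ vanishes. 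The first step is to multiply \eqref{plan1} by the weight $(1+t)^{\mu/2}$ (with $\mu>n+2s$ as fixed in Section~6) and integrate in time from some $t_0$ to $t$; this produces
\begin{equation}\label{plan2}
(1+t)^{\mu/2}\|u(t)\|_{H^{s_2}}^2+2\int_{t_0}^t(1+\tau)^{\mu/2}\!\!\int_{\R^n}(1+|\xi|^2)^{s_2-s_1}|\xi|^2\hat u^2\,d\xi\,d\tau = C+\frac{\mu}{2}\int_{t_0}^t(1+\tau)^{\mu/2-1}\|u(\tau)\|_{H^{s_2}}^2\,d\tau.
\end{equation}

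The second step is to split $\|u\|_{H^{s_2}}^2$ into low- and high-frequency parts via the cut-off $\chi(t,\xi)=\chi_0(\tfrac{1+t}{\mu}|\xi|^2)$ of Section~6. On the high-frequency region $|\xi|\geq\eta(t)=\sqrt{\mu/(1+t)}$ one has $(1+|\xi|^2)^{s_2-s_1}|\xi|^2\geq c\,\eta(t)^2(1+|\xi|^2)^{s_2-s_1}\geq \tfrac{c\mu}{1+t}(1+|\xi|^2)^{s_2}$ (using $s_2>s_1$, so the extra factor $(1+|\xi|^2)^{-s_1}$ is harmless at the cost of constants), hence
\[
\frac{\mu}{2}(1+\tau)^{\mu/2-1}\|u_H(\tau)\|_{H^{s_2}}^2\leq C(1+\tau)^{\mu/2}\int_{\R^n}(1+|\xi|^2)^{s_2-s_1}|\xi|^2\hat u^2\,d\xi,
\]
so this piece of the right-hand side of \eqref{plan2} is absorbed into the dissipation integral on the left (after shrinking its constant). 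The low-frequency piece $\|u_L(\tau)\|_{H^{s_2}}^2\le C\|u_L(\tau)\|_{L^2}^2$ (since on $|\xi|\le 2\eta(t)$ the multiplier $(1+|\xi|^2)^{s_2}$ is bounded) is estimated by Lemma~\ref{l6.1} with $l=0$: it is $\le C(1+\tau)^{-n/2}$, so
\[
\frac{\mu}{2}\int_{t_0}^t(1+\tau)^{\mu/2-1}\|u_L(\tau)\|_{H^{s_2}}^2\,d\tau\le C\int_{t_0}^t(1+\tau)^{\mu/2-1-n/2}\,d\tau\le C(1+t)^{(\mu-n)/2},
\]
where the exponent is positive because $\mu>n$. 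Feeding these two bounds back into \eqref{plan2} yields $(1+t)^{\mu/2}\|u(t)\|_{H^{s_2}}^2\le C(1+t)^{(\mu-n)/2}$, i.e.\ \eqref{6.11j}, and the leftover dissipation integral gives exactly \eqref{6.12j}.

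The main obstacle I anticipate is the bookkeeping around the frequency-cutoff split: one must check that the commutator issues are trivial (they are, since $\chi(t,D)$ is a Fourier multiplier and everything here is done on the Fourier side), that the weight $(1+t)^{\mu/2}$ does not interfere with the low/high threshold $\eta(t)$ (it does not, because $\mu$ enters $\eta(t)$ only as a fixed constant), and that the dissipation really dominates $(1+\tau)^{-1}(1+|\xi|^2)^{s_2}$ on the high-frequency set --- this is where $s_2>s_1$ (rather than merely $s_2\ge s_1$) and $s_1<1$ are used, together with the inequality $(1+|\xi|^2)^{s_2-s_1}|\xi|^2\ge c\,|\xi|^2/(1+|\xi|^2)^{s_1}\cdot(1+|\xi|^2)^{s_1}\cdots$; one should state cleanly that for $|\xi|\ge\eta(t)$, $|\xi|^2\ge \eta(t)^2=\mu/(1+t)$ and $(1+|\xi|^2)^{s_2-s_1}\ge c(1+|\xi|^2)^{s_2-s_1}$, so the product controls $\tfrac{1}{1+t}(1+|\xi|^2)^{s_2}$ up to constants when $s_1\ge 0$. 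Once that one inequality is pinned down, the rest is the standard Matsumura-type weighted-energy closing argument, and the induction-free structure here (only $l=0$ of Lemma~\ref{l6.1} is needed) keeps it short.
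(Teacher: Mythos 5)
Your overall strategy is the same as the paper's: start from the energy identity \eqref{5.7}, use the time weight $(1+t)^{\mu/2}$, control the low-frequency part of $\|u\|_{H^{s_2}}^2$ by Lemma \ref{l6.1} with $l=0$, and absorb the high-frequency part into the dissipation (the paper absorbs at the level of the differential inequality \eqref{6.14} before applying the integrating factor, you integrate first and absorb afterwards --- that difference is immaterial). The boundary term at $t_0$ is indeed handled by Theorem \ref{t5.1}, and the low-frequency computation $\int_{t_0}^t(1+\tau)^{\mu/2-1-n/2}d\tau\le C(1+t)^{(\mu-n)/2}$ matches \eqref{6.16}.

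However, the one step you yourself identify as the crux is not established correctly, and as written it is false. You claim, for $|\xi|\ge\eta(t)$, that $(1+|\xi|^2)^{s_2-s_1}|\xi|^2\ge c\,\eta(t)^2(1+|\xi|^2)^{s_2-s_1}\ge \frac{c\mu}{1+t}(1+|\xi|^2)^{s_2}$, ``the extra factor $(1+|\xi|^2)^{-s_1}$ being harmless at the cost of constants.'' It is not: for $s_1>0$ the ratio $(1+|\xi|^2)^{s_2-s_1}/(1+|\xi|^2)^{s_2}=(1+|\xi|^2)^{-s_1}\to 0$ as $|\xi|\to\infty$, so no uniform constant exists on the unbounded high-frequency region; and your later restatement ``$(1+|\xi|^2)^{s_2-s_1}\ge c(1+|\xi|^2)^{s_2-s_1}$'' is a tautology that does not recover the missing factor $(1+|\xi|^2)^{s_1}$. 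The inequality you need is true, but its proof requires the two-region splitting that the paper carries out in \eqref{6.13}: first fix $t_0$ with $\eta(t_0)\le\tfrac12$ (the paper takes $\mu/(1+t_0)=\tfrac14$); on $\{|\xi|\ge 1\}$ use $\frac{|\xi|^2}{(1+|\xi|^2)^{s_1}}\ge\frac{|\xi|^{2-2s_1}}{2^{s_1}}\ge\frac12\ge\frac{\eta(t)^2}{2}$ (here $s_1\le 1$ enters, and no factor $\eta(t)^2$ is needed on this region), while on $\{\eta(t)\le|\xi|\le 1\}$ use $\frac{|\xi|^2}{(1+|\xi|^2)^{s_1}}\ge\frac{|\xi|^2}{2}\ge\frac{\eta(t)^2}{2}$. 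Combining the two regions gives $\int_{\R^n}(1+|\xi|^2)^{s_2-s_1}|\xi|^2\hat u^2\,d\xi\ \ge\ \frac{\eta(t)^2}{2}\int_{|\xi|\ge\eta(t)}(1+|\xi|^2)^{s_2}\hat u^2\,d\xi$, which is exactly the Poincar\'e-type bound your absorption step needs. With that inequality pinned down (and the elementary remark $\|u\|_{H^{s_2}}^2\le\int_{|\xi|\le\eta(t)}(1+|\xi|^2)^{s_2}\hat u^2\,d\xi+\int_{|\xi|\ge\eta(t)}(1+|\xi|^2)^{s_2}\hat u^2\,d\xi$, which avoids even the harmless factor $2$ from writing $u=u_L+u_H$), the rest of your weighted-integration and absorption argument closes and reproduces \eqref{6.11j} and \eqref{6.12j} exactly as in the paper.
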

\begin{proof}
Taking  
	$\frac{\mu}{1+t_0}=\frac{1}{4}$, then for all $t\geq t_0>0$, it gives
	\begin{equation}
	\begin{split}
     \eta(t)\leq 1/2,
	\end{split}
	\end{equation}
	which follows
	\begin{equation}\label{6.13}
	\begin{split}
&\int_{\R^n}(1+|\xi|^2)^{s_2-s_1}|\xi|^2\hat{u}^2d\xi\\&\ \ \geq \int_{|\xi|\geq \eta(t)}(1+|\xi|^2)^{s_2}\frac{|\xi|^2}{(1+|\xi|^2)^{s_1}}\hat{u}^2d\xi
\\&\ \ \geq \int_{|\xi|\geq 1}(1+|\xi|^2)^{s_2}\frac{|\xi|^2}{(1+|\xi|^2)^{s_1}}\hat{u}^2d\xi+\int_{1\geq|\xi|\geq \eta(t)}(1+|\xi|^2)^{s_2}\frac{|\xi|^2}{(1+|\xi|^2)^{s_1}}\hat{u}^2d\xi
\\&\ \ \geq 
\int_{|\xi|\geq 1}(1+|\xi|^2)^{s_2}\frac{|\xi|^{2-2s_1}}{2^{s_1}}\hat{u}^2d\xi+\int_{1\geq|\xi|\geq \eta(t)}(1+|\xi|^2)^{s_2}\frac{|\xi|^2}{2^{s_1}}\hat{u}^2d\xi
\\&\ \  \geq  \frac{1}{2} \int_{|\xi|\geq 1}(1+|\xi|^2)^{s_2}\hat{u}^2d\xi+\frac{1}{2}\eta(t)^2\int_{1\geq|\xi|\geq \eta(t)}(1+|\xi|^2)^{s_2}\hat{u}^2d\xi
\\& \ \ \geq \frac{\eta(t)^2}{2}\int_{|\xi|\geq \eta(t)}(1+|\xi|^2)^{s_2}\hat{u}^2d\xi
\\& \ \ =\frac{\eta(t)^2}{2}\Bigl (\|u\|_{H^{s_2}}^2-\int_{|\xi|\leq  \eta(t)}(1+|\xi|^2)^{s_2}\hat{u}^2d\xi\Bigl).
	\end{split}
	\end{equation}
	From (\ref{5.7}), (\ref{6.13})  and Lemma \ref{l6.1}  we have
	\begin{equation}\label{6.14}
	\begin{split}
\frac{d}{dt}\|u\|_{H^{s_2}}^2+\int_{\R^n}(1+|\xi|^2)^{s_2-s_1}|\xi|^2\hat{u}^2d\xi+\frac{1}{2}\eta(t)^2\|u\|_{H^{s_2}}^2
&\leq \frac{1}{2}\eta(t)^2\int_{|\xi|\leq  \eta(t)}(1+|\xi|^2)^{s_2}\hat{u}^2d\xi
\\&\leq c\eta(t)^2\|u_L\|_{H^{s_2}}^2\leq c(1+t)^{-\frac{n}{2}-1}
	\end{split}
	\end{equation}
which follows 
\begin{equation}\label{6.15}
\begin{split}
\frac{d}{dt}\|u\|_{H^{s_2}}^2+\int_{\R^n}(1+|\xi|^2)^{s_2-s_1}|\xi|^2\hat{u}^2d\xi+\frac{1}{2}\mu(1+t)^{-1}\|u\|_{H^{s_2}}^2
\leq  c(1+t)^{-\frac{n}{2}-1}.
\end{split}
\end{equation}
Multiplying (\ref{6.15}) by $e^{\int^t_0 \mu/2 (1+\tau)^{-1}d\tau}=(1+t)^{\mu/2}$ and 
integrating  from $t_0$ to $t$, from Theorem \ref{t5.1} we have
\begin{equation}\label{6.16}
\begin{split}
	&(1+t)^{\mu/2}\|u\|_{H^{s_2}}^2 +\int^t_{t_0}(1+\tau)^{\mu/2}\int_{\R^n}(1+|\xi|^2)^{s_2-s_1}|\xi|^2\hat{u}^2d\xi d\tau
	\\&\ \ \leq  \int^t_{t_0}(1+\tau)^{\mu/2-\frac{n}{2}-1}d\tau +c\|u(t_0)\|_{H^{s_2}}^2
	\leq c(1+t)^{-\frac{n}{2}+\mu/2}.
\end{split}
\end{equation}

Then (\ref{6.11j}) and (\ref{6.12j}) follow from (\ref{6.16}).
\end{proof}
\begin{theorem}\label{l6.2}Suppose $u_0\in L^1(\R^n)\bigcap H^s(\R^n)$ and $s>\max\{s_2,n/2\}, s_2>s_1, 0\leq s_1< 1$
	\begin{equation}
	\|u\|_{L^1}\leq c\|u_0\|_{L^1}.
	\end{equation}
\end{theorem}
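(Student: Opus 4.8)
The plan is to run a Duhamel/contraction-type $L^1$ estimate. Writing $\text{div}\,f(u)=\nabla\cdot(u^{\theta+1})$ and moving the divergence onto the Green's function, the representation (\ref{3.2}) becomes
\[
u(t)=G(t)*u_0-\int_0^t G(t-\tau,\cdot)*\nabla\!\Bigl[(I-\Delta)^{-s_2}\bigl(u^{\theta+1}\bigr)(\tau)\Bigr]d\tau .
\]
Taking $L^1$ norms and applying Young's inequality, the linear part obeys $\|G(t)*u_0\|_{L^1}\le\|G(t)\|_{L^1}\|u_0\|_{L^1}\le c\|u_0\|_{L^1}$ by Theorem \ref{T3.3}. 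For the nonlinear part I would use that $(I-\Delta)^{-s_2}$ is convolution with the Bessel kernel, which belongs to $L^1(\R^n)$ whenever $s_2>0$, so $\|(I-\Delta)^{-s_2}g\|_{L^1}\le c\|g\|_{L^1}$, reducing matters to estimating $\int_0^t\|\nabla G(t-\tau)\|_{L^1}\,\|u^{\theta+1}(\tau)\|_{L^1}\,d\tau$.

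For the nonlinearity I would interpolate, $\|u^{\theta+1}\|_{L^1}\le\|u\|_{L^\infty}^{\theta-1}\|u\|_{L^2}^2$: by Theorem \ref{t5.1} the factor $\|u\|_{L^\infty}$ is bounded by a constant depending only on $\|u_0\|_{H^s}$, and by Theorem \ref{t6.1} one has $\|u\|_{L^2}\le\|u\|_{H^{s_2}}\le c(1+\tau)^{-n/4}$, hence $\|u^{\theta+1}(\tau)\|_{L^1}\le c(1+\tau)^{-n/2}$. With the Green's-function bound $\|\nabla G(\sigma)\|_{L^1}\le c\sigma^{-1/2}$ from Theorem \ref{T3.3} and $n>2$ (so that $(1+\tau)^{-n/2}$ is integrable on $[0,\infty)$), the time convolution $\int_0^t(t-\tau)^{-1/2}(1+\tau)^{-n/2}d\tau$ is uniformly bounded --- in fact $\lesssim(1+t)^{-1/2}$ by splitting at $\tau=t/2$ --- which gives $\|u(t)\|_{L^1}\le c(\|u_0\|_{L^1}+1)$, i.e.\ the asserted bound with $c$ depending on $\|u_0\|_{H^s}$ and $\|u_0\|_{L^1}$.

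The step I expect to be the real obstacle is that Theorem \ref{T3.3} only provides $\|\nabla G(\sigma)\|_{L^1}\le c\sigma^{-1/2}$ for $\sigma\ge1$; for short times $\|\nabla G(\sigma)\|_{L^1}$ can be as large as $\sigma^{-1/(2(1-s_1))}$, which is not integrable near $\sigma=0$ once $s_1\ge1/2$, so the crude product estimate fails in the regime $\tau\approx t$. The remedy is to keep the smoothing operator attached to $G$: the combined kernel $G(\sigma)*\nabla(I-\Delta)^{-s_2}$ has symbol $e^{-m(\xi)\sigma}\,i\xi\,(1+|\xi|^2)^{-s_2}$ with $m(\xi)=|\xi|^2/(1+|\xi|^2)^{s_1}$, and the extra decay $(1+|\xi|^2)^{-s_2}$ upgrades its $L^1\!\to\!L^1$ operator norm for $\sigma\le1$ to $O\bigl(\sigma^{-1/2}+\sigma^{(2s_2-1)/(2(1-s_1))}\bigr)$, which is integrable on $(0,1)$ precisely because $s_2>s_1>s_1-1/2$ --- this is where the hypothesis $s_2>s_1$ is used. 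This operator-norm estimate would be obtained via the same frequency decomposition $\chi_1,\chi_2,\chi_3$ as in Section 3, using the pointwise bounds of Propositions \ref{p1}--\ref{p3} for low and middle frequencies and a scaling/integration-by-parts argument on the smooth, rapidly decaying high-frequency symbol. With this refined kernel bound in place the time integral closes as above and yields $\|u\|_{L^1}\le c\|u_0\|_{L^1}$.
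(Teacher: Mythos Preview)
Your argument is essentially the paper's own proof: Duhamel formula, Young's inequality on the linear piece via $\|G\|_{L^1}\le c$, the Bessel kernel $(I-\Delta)^{-s_2}$ as an $L^1$ contraction, the interpolation $\|u^{\theta+1}\|_{L^1}\le\|u\|_{L^\infty}^{\theta-1}\|u\|_{L^2}^2\le c(1+\tau)^{-n/2}$ via Theorems \ref{t5.1} and \ref{t6.1}, and then control of the time convolution $\int_0^t(t-\tau)^{-1/2}(1+\tau)^{-n/2}\,d\tau$ by splitting at $\tau=t/2$.

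The point where you diverge from the paper is your third paragraph, and here you are in fact \emph{more} careful than the paper. The paper simply writes $\|\nabla G(t-\tau)\|_{L^1}\le c(t-\tau)^{-1/2}$ for all $\tau$ and, for $t\le1$, bounds $I_3$ by $\int_0^t(t-\tau)^{-1/2}\,d\tau$; it never justifies this for $t-\tau<1$, even though Theorem \ref{T3.3} is stated only for $t\ge1$ and Proposition \ref{p3} for $G_3$ likewise. Your observation that the high-frequency piece of $\nabla G$ alone scales like $\sigma^{-1/(2(1-s_1))}$ for small $\sigma$, hence is not integrable once $s_1\ge1/2$, is correct, and your remedy---keeping the factor $(1+|\xi|^2)^{-s_2}$ attached to the kernel so that the combined symbol $i\xi(1+|\xi|^2)^{-s_2}e^{-m(\xi)\sigma}$ gains enough high-frequency decay to make the $L^1$ operator norm integrable in $\sigma$ near $0$---is a legitimate fix that closes a gap the paper leaves open. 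So your proposal reproduces the paper's route and strengthens it at the one delicate step.
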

\begin{proof}
	According to (\ref{3.2}),  we have
	\begin{equation}\label{6.19j}
	\begin{split}
	\|u\|_{L^1}&\leq \|G*u_0\|_{L^1}+\int^t_0\| G(t-\tau,\cdot)*\frac{\text{div} (u^{\theta+1})}{(I-\Delta)^{s_2}}\|_{L^1}d\tau\\
	&=I_1+I_2.
	\end{split}
	\end{equation}
	For $I_1$, by Young's inequality and Theorem  \ref{T3.3}, it follows
	\begin{equation}\label{6.20jj}
	I_1\leq \|G\|_{L^1}\|u_0\|_{L^1}\leq c.
	\end{equation}
	For $I_2$, notice the fact that
	\begin{equation}
	(I-\Delta)^{-s_2}u^{\theta+1}=k(x)*u^{\theta+1}
	\end{equation}
	where 
	$k(x)=(4\pi)^{-\frac{n}{2}}\Gamma(s_2)^{-1}\int^\infty_0 t^{\frac{2s_2-n-2}{2}}e^{-t-\frac{|x|^2}{4t}}d\tau$ (refer to （(1.8)） in \cite{AU}) satisfying
	\begin{equation}
	\|k(x)*u^{\theta+1}\|_{L^p}\leq \|u^{\theta+1}\|_{L^p}, \text{ for all }p\geq 1,
	\end{equation}
	and from Theorem \ref{t6.1}, it gives
	\begin{equation}
	\|u^{\theta+1}\|_{L^1}\leq \|u\|_{L^\infty}^{\theta-1} \|u\|^2_{L^2}\leq c(1+t)^{-n/2}.
	\end{equation}
	Then by Young's inequality and Theorem  \ref{T3.3}
\begin{equation}
\begin{split}
I_2&\leq \int^t_0\| \nabla G(t-\tau,\cdot)* \frac{ u^{\theta+1}}{(I-\Delta)^{s_2}}\|_{L^1}d\tau\\
&\leq c\int^t_0\|\nabla G(t-\tau,\cdot)\|_{L^1}\|u^{\theta+1}\|_{L^1} d\tau\\
&\leq c\int^t_0(t-\tau)^{-1/2}(1+\tau)^{-n/2}d\tau
\\&=I_3.
\end{split}
\end{equation}
Since for $t\geq 1$
\begin{equation}
I_3\leq ct^{-1/2}\int^{t/2}_0(1+\tau)^{-n/2}d\tau+c(1+t)^{-n/2}\int^t_{t/2}(t-\tau)^{-1/2}d\tau\leq c
\end{equation}
and for $t\leq 1$
\begin{equation}
I_3\leq \int^t_0(t-\tau)^{-1/2}d\tau\leq c,
\end{equation}
then
\begin{equation}\label{6.26j}
I_2\leq c.
\end{equation}	
From (\ref{6.19j})	(\ref{6.20jj})	and (\ref{6.26j}), we complete the proof of Lemma \ref{l6.2}.
\end{proof}
{\bf Proof of Theorem 1.2}
	Applying $1-\chi(t,D)$ to both side of (\ref{1.1j}), we have
	\begin{equation}\label{6.20j}
(1-\chi(t,D))	u_t-\frac{\Delta}{(I-\Delta)^{s_1}}u_H=(1-\chi(t,D))\frac{-\text{div} u^{\theta+1}}{(I-\Delta)^{s_2}}.
	\end{equation}
	Noting that
	\begin{equation}\label{6.20}
	\begin{split}
&	\int_{\R^n}\La^s u_t (1-\chi(t,D))\La^s u_Hdx\\&=\int_{\R^n}\frac{d}{dt}\Bigl(\La^s u (1-\chi(t,D))\Bigl)\La^s u_Hdx-\int_{\R^n}\frac{d}{dt}\Bigl( 1-\chi(t,D)\Bigl)\La^s u\La^s u_Hdx\\
&=\int_{\R^n}\frac{d}{dt}\Bigl(\La^s u_H\Bigl)\La^s u_Hdx-\int_{\R^n}\frac{d}{dt}\Bigl( 1-\chi(t,D)\Bigl)\La^s u\La^s u_Hdx\\
&=\frac{1}{2}\frac{d}{dt}\|\La ^s u_H\|_{L^2}^2-R(t)
	\end{split}
	\end{equation}
	where 
\[R(t)=\int_{\R^n}\frac{d}{dt}\Bigl( 1-\chi(t,D)\Bigl)\La^s u\La^s u_Hdx. \]

	Multiplying (\ref{6.20j}) with $\La^{2s}u_H$, it follows from (\ref{6.20}) that for all $t\geq t_0=4\mu-1$
	\begin{equation}\label{6.22}
	\begin{split}
	&\frac{1}{2}\frac{d}{dt}\|\La^s u_H\|_{L^2}^2+\|\frac{|\xi|^{s+1}}{(1+|\xi|^2)^{s_1/2}}\widehat{u_H}\|_{L^2}^2\\&\ \ \leq \int_{{|\xi|\geq \eta(t)} }|\frac{\widehat{u_H}\widehat{u^{\theta+1}}(1-\chi(t,\xi))|\xi|^{2s+1}}{(1+|\xi|^2)^{s_2}}|d\xi+R(t)
	\\ & \ \ =  \int_{|\xi|\geq 1 }|\frac{\widehat{u_H}\widehat{u^{\theta+1}}(1-\chi(t,\xi))|\xi|^{2s+1}}{(1+|\xi|^2)^{s_2}}|d\xi+ \int_{1\geq |\xi|\geq \eta(t) }|\frac{\widehat{u_H}\widehat{u^{\theta+1}}(1-\chi(t,\xi))|\xi|^{2s+1}}{(1+|\xi|^2)^{s_2}}|d\xi+R(t)
	\\&\ \ =T_1(t)+T_2(t)+R(t).
	\end{split}
	\end{equation}
	For $T_1(t)$, from Lemma \ref{l2.5} and $s>s_2>s_1$,  it follows
	\begin{equation}\label{6.22j}
	\begin{split}
T_1(t)&\leq \int_{|\xi|\geq 1 }|\widehat{u_H}\widehat{u^{\theta+1}}||\xi|^{2s+1-2s_2}d\xi\\
&\ \ \leq \|\La^{s+1-s_1} 
u^{\theta+1}\|_{L^2}\||\xi|^{s-2s_2+s_1}\widehat{u_H}\|_{L^2(|\xi|\geq 1)}
\\&	\ \ \leq \|u\|^\theta_{L^\infty}\|\La^{s+1-s_1} u\|_{L^2}\||\xi|^{s-s_1+1}\widehat{u_H}\|_{L^2(|\xi|\geq 1)}^a \|\widehat{u_H}\|_{L^2(|\xi|\geq 1)}^{1-a}\\&
\ \  \leq c\bigl(\||\xi|^{s+1-s_1} \hat u\|_{L^2(|\xi|\geq 1)}+\||\xi| \hat u\|_{L^2(|\xi|\leq 1)}\bigl)\bigl(\va\||\xi|^{s-s_1+1}\widehat{u_H}\|_{L^2(|\xi|\geq 1)}+ c\|\widehat{u_H}\|_{L^2(|\xi|\geq 1)}\bigl)
\\& \ \ \leq \frac{1}{8} \||\xi|^{s+1-s_1} \hat u\|_{L^2(|\xi|\geq 1)}^2
+c\||\xi| \hat u\|_{L^2(|\xi|\leq 1)}^2+c\|\widehat{u_H}\|_{L^2(|\xi|\geq 1)}^2
	\end{split}
	\end{equation}	
		where $0<a<1$ and the last inequality holds by choosing $\va>0$ small enough.
		For $T_2(t)$, from Lemma \ref{l2.5} and $s>\frac{n}{2}>1>s_1$, it follows
	\begin{equation}\label{6.23}
	\begin{split}
T_2(t)&\ \ \leq \int_{1\geq |\xi|\geq \eta(t) }|\widehat{u_H}\widehat{u^{\theta+1}}||\xi|^{2s+1}d\xi\\
	&\ \ \leq \|\La^{s} 
	u^{\theta+1}\|_{L^2}\||\xi|^{s+1}\widehat{u_H}\|_{L^2(1\geq |\xi|\geq \eta(t))}
	\\&	\ \ \leq \|u\|^\theta_{L^\infty}\|\La^{s} u\|_{L^2}\||\xi|^{s+1}\widehat{u_H}\|_{L^2(1\geq |\xi|\geq \eta(t))}
	\\&
	\ \  \leq c	(\||\xi|^{s}\hat u\|_{L^2(|\xi|\geq 1)}+\||\xi|^{s}\hat u\|_{L^2(|\xi|\leq 1)})
	\||\xi|^{s+1}\widehat{u_H}\|_{L^2(1\geq |\xi|\geq \eta(t))}
	 \\&	\ \  \leq c	(\||\xi|^{s+1-s_1}\hat u\|^{a_1}_{L^2(|\xi|\geq 1)}\|\hat u\|_{L^2(|\xi|\geq 1)}^{1-a_1}+\||\xi|^{s} \hat u\|_{L^2(|\xi|\leq 1)})
	 \||\xi|^{s+1}\widehat{u_H}\|_{L^2(1\geq |\xi|\geq \eta(t))}
	 \\ & 
	 \ \ 
	\leq c
	\bigl(\va\||\xi|^{s-s_1+1}\widehat{u_H}\|_{L^2(|\xi|\geq 1)}+ c\|\widehat{u_H}\|_{L^2( |\xi|\geq 1)}+\||\xi| \hat u|_{L^2(|\xi|\leq 1)}\bigl)\||\xi|^{s+1}\widehat{u_H}\|_{L^2(1\geq |\xi|\geq \eta(t))}
	\\& \ \ \leq \frac{1}{8} \||\xi|^{s+1-s_1} \widehat{u_H}\|_{L^2( |\xi|\geq 1)}^2+\frac{1}{8} \||\xi|^{s+1} \widehat{u_H}\|_{L^2(1\geq |\xi|\geq \eta(t))}^2
	\\&\ \ \ \ +c\||\xi| \hat u\|_{L^2(|\xi|\leq 1)}^2+c\|\widehat{u_H}\|_{L^2(|\xi|\geq 1)}^2
	\end{split}
	\end{equation}	
	where $0<a_1<1$ and the last inequality holds by choosing $\va>0$ small enough.
	Then from (\ref{6.22j}) and (\ref{6.23}), we have
	\begin{equation}\label{6.25}
	\begin{split}
	&\int_{ |\xi|\geq \eta(t) }|\frac{\widehat{u_H}\widehat{u^{\theta+1}}(1-\chi(t,D))|\xi|^{2s+1}}{(1+|\xi|^2)^{s_2}}|d\xi
	\\& \ \ \leq \frac{1}{4} \||\xi|^{s+1-s_1} \widehat{u_H}\|_{L^2( |\xi|\geq 1)}^2+\frac{1}{4} \||\xi|^{s+1} \widehat{u_H}\|_{L^2(1\geq |\xi|\geq \eta(t))}^2
	\\&\ \ \ \ +c\||\xi| \hat{u}\|_{L^2(|\xi|\leq 1)}^2+c\|\widehat{u_H}\|_{L^2(|\xi|\geq 1)}^2
	\end{split}
	\end{equation}	
	and 
		\begin{equation}\label{6.26}
		\begin{split}
		&\int_{|\xi|\geq \eta(t)}\frac{|\xi|^{2+2s}}{(1+|\xi|^2)^{s_1}}\widehat{u_H}^2d\xi
		\\&\ \ = \int_{|\xi|\geq 1}\frac{|\xi|^{2+2s}}{(1+|\xi|^2)^{s_1}}\widehat{u_H}^2d\xi+\int_{1\geq|\xi|\geq \eta(t)}\frac{|\xi|^{2+2s}}{(1+|\xi|^2)^{s_1}}\widehat{u_H}^2d\xi
		\\&\ \ \geq \frac{1}{2} \||\xi|^{s+1-s_1} \widehat{u_H}\|_{L^2( |\xi|\geq 1)}+\frac{1}{2} \||\xi|^{s+1} \widehat{u_H}\|_{L^2(1\geq |\xi|\geq \eta(t))}.
		\end{split}
		\end{equation}
		Together with (\ref{6.22}), (\ref{6.25})and (\ref{6.26}), we have
	\begin{equation}\label{6.27}
	\begin{split}
	&\frac{1}{2}\frac{d}{dt}\|\La^s u_H\|_{L^2}^2+\frac{1}{4} \||\xi|^{s+1-s_1} \widehat{u_H}\|_{L^2( |\xi|\geq 1)}^2+\frac{1}{4} \||\xi|^{s+1} \widehat{u_H}\|_{L^2(1\geq |\xi|\geq \eta(t))}^2
	\\ & \ \ \leq c\||\xi| \hat{u}\|_{L^2(|\xi|\leq 1)}^2+c\|\widehat{u_H}\|_{L^2(|\xi|\geq 1)}^2+R(t).
	\end{split}
	\end{equation}
	 From Theorem \ref{l6.2}
	\begin{equation}\label{6.37}
	\begin{split}
	R(t)\leq &\int_{\R^n}|\hat{u}|^2|\xi|^{2s}\Bigl(1-\chi(\mu^{-1}(1+t)|\xi|^2)\Bigl)_t(1-\chi(\mu^{-1}(1+t|\xi|^2)))d\xi\\
&	\leq c\|\hat{u}\|_{L^\infty}^2\int_{\R^n}|\xi|^{2s+2}(1-\chi(z))|\frac{d}{dz}\chi(z) |\mu^{-1}d\xi\\
&	\leq c\|u\|_{L^1}^2\int_{\eta(t)\leq |\xi|\leq 2\eta(t)}|\xi|^{2s+2}d\xi\\
	&\leq  c(1+t)^{-s-n/2-1}.
	\end{split}
	\end{equation}
	Obviously,
		\begin{equation}\label{6.38}
		\begin{split}
		&\frac{1}{4} \||\xi|^{s+1-s_1} \widehat{u_H}\|^2_{L^2( |\xi|\geq 1)}+\frac{1}{4} \||\xi|^{s+1} \widehat{u_H}\|^2_{L^2(1\geq |\xi|\geq \eta(t))}\\&\ \ \geq \frac{1}{4} \||\xi|^{s} \widehat{u_H}\|_{L^2( |\xi|\geq 1)}^2+\frac{1}{4}\eta(t)^2 \||\xi|^{s} \widehat{u_H}\|_{L^2(1\geq |\xi|\geq \eta(t))}^2\\
		&\ \ \geq \frac{1}{4}\eta(t)^2 \||\xi|^{s} \widehat{u_H}\|_{L^2}^2.
		\end{split}
		\end{equation}
		From (\ref{6.27}), (\ref{6.37}) and (\ref{6.38}), it gives
		\begin{equation}\label{6.39}
		\begin{split}
		&\frac{d}{dt}\|\La^s u_H\|_{L^2}^2+\frac{\eta(t)^2}{2}  \||\xi|^{s} \widehat{u_H}\|_{L^2}^2 \leq c\||\xi| \hat{u}\|_{L^2(|\xi|\leq 1)}^2+c\|\widehat{u_H}\|_{L^2(|\xi|\geq 1)}^2+c(1+t)^{-s-n/2-1}.
		\end{split}
		\end{equation}
		Multiplying (\ref{6.39}) by $(1+t)^{\mu/2}$, then
		integrating  from $t_0=4\mu-1$ to $t$, since
		\begin{equation}
	\|u(t_0)\|_{H^s}^2\leq c\|u_0\|_{H^s}^2,
		\end{equation}
		we have
		\begin{equation}\label{6.40jj}
		\begin{split}
		(1+t)^{\mu/2}\|\La^s u_H\|_{L^2}^2&\leq c\int^t_{t_0}(1+\tau)^{\mu/2}(\||\xi| \hat{u}\|_{L^2(|\xi|\leq 1)}^2+\|\widehat{u_H}\|_{L^2(|\xi|\geq 1)}^2)d\tau\\&
		\ \ +c\int^t_{t_0}(1+\tau)^{\mu/2-s-n/2-1}d\tau+ c\|u_0\|^2_{H^s}.
		\end{split}
		\end{equation}	
		From (\ref{6.12j}) and (\ref{6.40jj}) and
		\begin{equation}\label{6.41}
		\||\xi| \hat{u}\|_{L^2(|\xi|\leq 1)}^2+\|\widehat{u_H}\|_{L^2(|\xi|\geq 1)}^2\leq \int_{|\xi|\leq 1}(1+|\xi|^2)^{s_2-s_1}|\xi|^2\hat{u}^2d\xi + \int_{|\xi|\geq 1}(1+|\xi|^2)^{s_2-s_1}|\xi|^2\hat{u}^2d\xi,
		\end{equation}
		it follows
			\begin{equation}\label{6.40j}
			\begin{split}
			(1+t)^{\mu/2}\|\La^s u_H\|_{L^2}^2&\leq c +c\int^t_{t_0}(1+\tau)^{\mu/2-s-n/2-1}d\tau+ c\|u_0\|^2_{H^s}.
			\end{split}
			\end{equation}
			which is equivalent to 
				\begin{equation}\label{6.40}
				\begin{split}
				\|\La^s u_H\|_{L^2}^2&\leq c (1+t)^{-s-n/2} +c(1+t)^{-\mu/2}.
				\end{split}
				\end{equation}
				where $c$  depends on $\|u_0\|_{H^s}$ and $\|u_0\|_{L^1}$.
				Theorem 1.2 	is proved because $\mu>n+2s$.


\end{document}